\newtheorem{theorem}{Theorem}[section]
\newtheorem{definition}[theorem]{Definition}
\newtheorem{lemma}[theorem]{Lemma}
\newtheorem{proposition}[theorem]{Proposition}
\newtheorem{corollary}[theorem]{Corollary}
\newtheorem{remark}[theorem]{Remark}
\newtheorem{step}{Step}
\newlist{myenumerate}{enumerate}{1}
\setlist[myenumerate,1]{label=\textup{(\roman*)}, labelsep=1em}
\newcommand{\prob}{\ensuremath{\mathbb{P}}}
\newcommand{\longbm}[3]{\ensuremath{\left( \begin{array}{c} {#1} \\ {#2} \\ {#3} \end{array} \right)}} 
\newcommand{\indc}[1]{\ensuremath{1_{#1}}}
\newcommand{\C}{\ensuremath{\mathcal{C}}}
\newcommand{\E}{\ensuremath{\mathbb{E}}}
\newcommand{\F}{\ensuremath{\mathcal{F}}}
\newcommand{\G}{\ensuremath{\mathbb{G}}}
\newcommand{\indcbm}{\ensuremath{(1_{v_{t+1} \in V^{(1)}}, \dots, 1_{v_{t+1} \in V^{(m)}})^\dagger}}
\newcommand{\indcbmi}{\ensuremath{(1_{v_{i+1} \in V^{(1)}}, \dots, 1_{v_{i+1} \in V^{(m)}})^\dagger}}
\newcommand{\R}{\ensuremath{\mathbb{R}}}
\newcommand{\Z}{\ensuremath{\mathbb{R}}}
\newcommand{\hatindcbm}{\ensuremath{(1_{\hat{v}_{t+1} \in V^{(1)}}, \dots, 1_{\hat{v}_{t+1} \in V^{(m)}})^\dagger}}
\newcommand{\hatindcbmi}{\ensuremath{(1_{\hat{v}_{i+1} \in V^{(1)}}, \dots, 1_{\hat{v}_{i+1} \in V^{(m)}})^\dagger}}
\newcommand{\Bin}[2]{\ensuremath{\textrm{Bin}({#1}, {#2})}}
\newcommand{\N}{\ensuremath{\mathbb{N}}}
\newcommand{\footremember}[2]{%
    \footnote{#2}
    \newcounter{#1}
    \setcounter{#1}{\value{footnote}}%
}						
\numberwithin{equation}{section}
\title{\vspace{-1.5cm}The connectivity and phase transition in inhomogeneous random graphs of finite types}
\author{%
Hamin Jung \footremember{trailer}{\scriptsize{Department of Mathematics, Seoul National University. Email: silverbell64@snu.ac.kr}}
}
\begin{document}
\maketitle

\vspace{-1cm}
\begin{abstract}
A significant generalization of the Erd\"os-R\'enyi random graph model is an `inhomogeneous' random graph where the edge probabilities vary according to vertex types. We identify the threshold value for this random graph with a finite number of vertex types to be connected and examine the model's behavior near this threshold value. In particular, we show that the threshold value is $c \frac{\log n }{n}$ for some $c>0$ which is explicitly determined, where $n$ denotes the number of vertices. Furthermore, we prove that near the threshold, the graph consists of a giant component and isolated vertices. We also investigate the phase transition and provide an alternative proof of the results by Bollob\'as et al. [Random Struct. Algorithms, 31, 3-122 (2007)]. Our proofs are based on an exploration process that corresponds to the graph, and instead of relying heavily on branching processes, we employ a random walk constructed from the exploration process. We then apply a large deviations theory to show that a reasonably large component is always significantly larger, a strategy used in both connectivity and phase transition analysis.
\end{abstract}

\section{Introduction}
\label{sec: introduction}

The first random graph model was presented and studied by Erd\"{o}s and R\'{e}nyi \cite{Erdos1959}\cite{Erdos1960} in the late 1950s and early 1960s. They proposed a model, $\G_{n,m}$, with a sample space of graphs consisting of $n$ labeled vertices and $M = M(n)$ edges. The outcomes from the sample space are assumed to be equiprobable. Meanwhile, Gilbert \cite{Gilbert1959} introduced another random graph model, $\G_{n,p}$, which is a graph on $n$ vertices with edges appearing with probability $p = p(n)$, and all the edges are assumed to be independent. In many circumstances, these models $\G_{n,m}$ and $\G_{n,p}$ are equivalent under the obvious relation $p = M / {n\choose2}$. For convenience, the latter is mostly used and referred to as the Erd\"{o}s-R\'{e}nyi random graph. 

One of the most significant properties of random graphs is the phase transition, which is also called as the emergence of the giant component.  For a graph $G$, we write by $\C_1(G)$ the largest component of $G$. We say that an event $E_n$ holds `with high probability', abbreviated as w.h.p., if $\prob(E_n) \to 1$ as $n \to \infty$. The following result is the classical result of the phase transition in the Erd\"os-R\'enyi random graph.
\begin{theorem}
Consider $\G := \G_{n,c/n}$ where $c>0$ is a constant.
\begin{myenumerate}
\item If $c<1$, there exists a $C>0$ such that all components of $\G$ have size less than $C\log n$ w.h.p.
\item If $c>1$, there exist $C,D>0$ such that $Dn \leq |\C_1(\G)| \leq Cn$ and other components have size less than $C \log n$ w.h.p.
\end{myenumerate}
\end{theorem}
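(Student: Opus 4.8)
\noindent The plan is to study $\G$ through the standard vertex-exploration process. Starting from a vertex $v$, maintain disjoint sets of \emph{active}, \emph{saturated}, and \emph{neutral} vertices; at step $t$ remove an active vertex, reveal its edges to the currently neutral vertices (each present independently with probability $p=c/n$), turn the newly discovered vertices active, and saturate the chosen one. Writing $A_t$ for the number of active vertices after $t$ steps with $A_0=1$, one has $A_t=A_{t-1}-1+Y_t$, where conditionally on the history $Y_t\sim\Bin{N_t}{c/n}$ with $N_t$ the current number of neutral vertices, and $|\C(v)|=\min\{t\ge1:A_t=0\}$. I will only use the crude bounds $n-t-A_{t-1}\le N_t\le n$: as long as the exploration has touched at most $\ell$ vertices, $N_t$ lies between $n-\ell$ and $n$, so $A_t$ can be coupled \emph{between} two honest random walks with i.i.d.\ $\Bin{\cdot}{c/n}$ increments of drift $c-1+o(1)$ and $c(1-\ell/n)-1+o(1)$ respectively. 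Every estimate then reduces to a Chernoff bound for a sum of independent binomials; the informal principle that ``a reasonably large component is always significantly larger'' is precisely the statement that once $A_t$ escapes a logarithmic window, the positive-drift coupling forces it to keep growing.

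\medskip
\noindent\emph{Subcritical case $c<1$.} Since $N_t\le n$, couple so that $A_t\le\widetilde A_t:=1+\sum_{s\le t}(\xi_s-1)$ with $\xi_s$ i.i.d.\ $\Bin{n}{c/n}$ of mean $c<1$. For $\ell=\lceil K\log n\rceil$ the event $\{|\C(v)|\ge\ell\}$ forces $\widetilde A_{\ell-1}\ge1$, i.e.\ $\sum_{s<\ell}\xi_s\ge\ell-1$, which by Chernoff has probability at most $n^{-\gamma K(1+o(1))}$ for a constant $\gamma=\gamma(c)>0$. Taking $K$ with $\gamma K>2$ and a union bound over $v$ gives part~(i).

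\medskip
\noindent\emph{Supercritical case $c>1$.} Fix small $\epsilon_0>0$ with $\mu:=c(1-2\epsilon_0)-1>0$. \textbf{No medium components.} If $|\C(v)|=k$ with $C_1\log n\le k\le\epsilon_0 n$, then the exploration touches at most $k\le\epsilon_0 n$ vertices, so $Y_t$ dominates an i.i.d.\ sequence $\xi_s\sim\Bin{\lceil(1-2\epsilon_0)n\rceil}{c/n}$ of mean $\ge1+\mu$, and $A_k=0$ forces $\sum_{s\le k}\xi_s\le k-1$, of probability at most $e^{-\gamma'k}\le n^{-\gamma'C_1}$ by Chernoff. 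Summing over $k$ and over $v$, with $C_1$ large, shows that w.h.p.\ every component of $\G$ has size $<C_1\log n$ or $>\epsilon_0 n$. \textbf{A giant exists.} Running the same lower coupling, for a sufficiently small $\delta$ (so the coupling stays valid until $A_t$ reaches $\delta n$, e.g.\ $\delta<\mu\epsilon_0/c$) the walk $1+\sum(\xi_s-1)$ stays positive up to time $\asymp\delta n/\mu$ and exceeds $\delta n$ there with probability $\ge\rho_0-o(1)$, $\rho_0=\rho_0(\mu)>0$; hence $\prob(|\C(v)|\ge\delta n)\ge\rho_0-o(1)$, and by the gap $\prob(|\C(v)|>\epsilon_0 n)\ge\rho_0-o(1)$. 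To turn this into a high-probability statement, explore from a fixed vertex; if the component is small ($<C_1\log n$), delete it and repeat on the residual graph, which is a fresh $\G_{n',c/n}$ with $n'=n-o(n)$ and drift still bounded away from $0$; after $\omega(1)$ such trials the chance that all of them die small is $(1-\rho_0+o(1))^{\omega(1)}\to0$. Thus w.h.p.\ some component has size $>\epsilon_0 n/2$, giving $|\C_1(\G)|\ge Dn$; the upper bound $|\C_1(\G)|\le Cn$ is trivial with $C=1$.

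\medskip
\noindent\emph{Uniqueness, and the main obstacle.} By the gap it remains to show w.h.p.\ there is no second component of size $>\epsilon_0 n$, which I would obtain by sprinkling: write $\G=\G_1\cup\G_2$ with independent $\G_1\sim\G_{n,c_1/n}$, $\G_2\sim\G_{n,c_2/n}$, $c_1=(1+c)/2\in(1,c)$. Applying the supercritical analysis to $\G_1$, w.h.p.\ its vertices in $\G_1$-components of size $>\epsilon_1 n$ form a nonempty set covered by at most $1/\epsilon_1$ components $\mathcal D_1,\dots,\mathcal D_r$, each of size $\ge\epsilon_1 n$; conditionally on $\G_1$, the probability some pair $\mathcal D_i,\mathcal D_j$ spans no $\G_2$-edge is at most $\binom{r}{2}(1-c_2/n)^{\epsilon_1^2n^2}\to0$, so w.h.p.\ all the $\mathcal D_i$ lie in one component $\C^{\ast}$ of $\G$. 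Combined with the $\G_1$-gap, any other component of $\G$ is a union of $\G_1$-components each of size $<C_1'\log n$ joined by $\G_2$-edges, and a first-moment bound on the number of $\G_2$-edges inside a vertex set rules out (w.h.p.) any such union of size $\ge C\log n$; hence $\C^{\ast}=\C_1(\G)$ and every other component is smaller than $C\log n$, proving part~(ii). \textbf{Main obstacle.} The Chernoff estimates themselves are routine; the real work is the supercritical bookkeeping --- the two couplings are valid only while the exploration is small, which forces $\epsilon_0,\delta,C_1,\rho_0$ to be chosen in a definite order --- together with the ``no second giant'' step, where the naive estimate $(1-p)^{|\C||\C'|}$ is unavailable (the relevant sets depend on the very edges being bounded), necessitating the sprinkling plus first-moment argument.
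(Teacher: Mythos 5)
The paper does not prove this classical Erd\H{o}s--R\'enyi theorem directly; it states it as background, and the proof its machinery would supply is the $m=1$ specialization of Theorem~\ref{main thm: component}. Your framework matches that specialization in its main lines: a vertex-exploration process, a two-sided coupling of the active count $A_t$ with random walks having i.i.d.\ binomial increments (via crude bounds on the number of neutral vertices), and Chernoff estimates throughout. In particular the paper's subcritical step -- a supermartingale $\exp(\theta_0 S_t)/C^{t}$ with optional stopping -- is the same moment-generating-function computation as your Chernoff bound, and its Step~2 is exactly your ``no medium components.'' You diverge in two places. For $|\C_1|\ge Dn$ w.h.p.\ you restart exploration on the residual graph after each small failure; the paper (Step~5) instead controls the covariance of the indicators $\indc{|\C_x|\le\beta\log n}$ and applies a second-moment bound, which also pins down the exact constant $\alpha=\sum_k a_k(1-q_k)$. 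Your route is lighter but delivers only the $Dn$ lower bound, which is all this statement requires. For uniqueness you sprinkle $\G=\G_1\cup\G_2$; the paper (Step~3) instead runs the lower exploration from each putative large cluster, \emph{stops} it at time $n^{2/3}$ once the frontier holds $\Omega(n^{2/3})$ active vertices, and observes that the edges \emph{between the two frontiers} are at that moment still unrevealed, so the estimate $(1-p)^{\Omega(n^{4/3})}=o(1)$ is legitimate. This directly answers the ``main obstacle'' you flag: that estimate is not unavailable -- one only has to condition on the exploration frontiers rather than on the entire components -- so sprinkling is a valid alternative but not a necessity.

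One caution on your sprinkling step: ruling out a second component of size $\ge C\log n$ via ``a first-moment bound on the number of $\G_2$-edges inside a vertex set'' does not close as written. There are $\binom{n}{K}$ candidate sets of size $K$, so a naive union bound fails, and restricting to unions of small $\G_1$-components (which at least makes the candidate set $\G_2$-measurable-independent) still leaves a nontrivial contracted graph whose mean degree is not obviously subcritical. The paper's stop-at-the-frontier argument sidesteps this entirely by bounding, for each fixed starting vertex $x$, the probability that $\C_x$ is medium-sized, then union-bounding over the $n$ vertices.
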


Another basic property of a graph is connectivity. Similarly to the giant component, a transition also occurs regarding the connectivity of a graph.

\begin{theorem}
\label{thm: homo connected}
Consider $\G:=\G_{n, c\log n / n}$ where $c>0$ is a constant.
\begin{myenumerate}
\item If $c<1$, $\G$ is not connected w.h.p.
\item If $c>1$, $\G$ is connected w.h.p. 
\end{myenumerate}
\end{theorem}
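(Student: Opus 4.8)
The plan is to treat the two regimes by complementary moment arguments, in each case reducing connectivity to counting vertex sets cut off from the rest of $\G$. Write $p = c\log n/n$, so that $(1-p)^{\alpha n} = e^{-\alpha c\log n\,(1+o(1))} = n^{-\alpha c + o(1)}$ for fixed $\alpha>0$.

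\emph{Part (i): $c<1$.} It suffices to exhibit an isolated vertex w.h.p., since a graph on $n\ge 2$ vertices with an isolated vertex is disconnected, and I would do this by the second moment method. Let $X$ count the isolated vertices of $\G$. Then $\E X = n(1-p)^{n-1} = n^{1-c+o(1)}\to\infty$ because $c<1$, while summing over ordered pairs of distinct vertices gives $\E[X(X-1)] = n(n-1)(1-p)^{2n-3}$. Hence
\[
\frac{\E[X^2]}{(\E X)^2} \;=\; \frac{\E[X(X-1)]}{(\E X)^2} + \frac{1}{\E X} \;=\; \frac{n-1}{n(1-p)} + \frac{1}{\E X} \;\longrightarrow\; 1 ,
\]
so $\mathrm{Var}(X) = o\big((\E X)^2\big)$, and Chebyshev's inequality yields $\prob(X>0)\to 1$.

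\emph{Part (ii): $c>1$.} I would show that w.h.p.\ $\G$ has no connected component of size at most $n/2$; since two disjoint vertex sets cannot both exceed $n/2$ in size, this forces $\G$ to be connected. Fix $S\subseteq V$ with $|S|=k$: the event that $\G[S]$ is connected depends only on the edges inside $S$, the event that no edge joins $S$ to $V\setminus S$ depends only on the $k(n-k)$ potential cut edges, so these are independent, and bounding the former by a union bound over the $k^{k-2}$ labelled spanning trees of $S$ (Cayley's formula), each present with probability $p^{k-1}$, the expected number $N_k$ of components of $\G$ of size exactly $k$ satisfies
\[
\E N_k \;\le\; b_k \;:=\; \binom{n}{k}\,k^{k-2}\,p^{k-1}\,(1-p)^{k(n-k)} ,
\]
and $b_k \le (ec\log n)^{k}(1-p)^{k(n-k)}/p$ using $\binom{n}{k}\le (en/k)^k$ and $enp = ec\log n$. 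It remains to prove $\sum_{k=1}^{\lfloor n/2\rfloor} b_k = o(1)$. The term $b_1 = n(1-p)^{n-1} = n^{1-c+o(1)}$ tends to $0$ since $c>1$. For $2\le k\le n/4$, using $\binom{n}{k+1}/\binom{n}{k}\le n/(k+1)$ and $(k+1)^{k-1}/k^{k-2} = (k+1)(1+1/k)^{k-2}\le e(k+1)$, I would check that
\[
\frac{b_{k+1}}{b_k} \;\le\; e\,n\,p\,(1-p)^{\,n-2k-1} \;\le\; (ec\log n)\,n^{-c\,(1-(2k+1)/n)} ,
\]
which tends to $0$ uniformly over $k\le n/4$ because there $1-(2k+1)/n\ge \tfrac12-o(1)$ and $c>1$; hence $\sum_{k=1}^{\lfloor n/4\rfloor} b_k \le 2b_1\to 0$. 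Finally, for $n/4<k\le n/2$ the cut factor alone is $e^{-\Omega(n\log n)}$, namely $(1-p)^{k(n-k)}\le (1-p)^{3n^2/16} = e^{-(3c/16)\,n\log n\,(1+o(1))}$, whereas the complementary factor is only $(ec\log n)^{k}/p\le (ec\log n)^{n/2}\cdot n/(c\log n) = e^{O(n\log\log n)}$; so every such $b_k$ is $o(n^{-1})$ and the $O(n)$ remaining terms also sum to $o(1)$.

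I expect the only genuine difficulty to be in part (ii), and specifically in arranging the estimate of $\sum_k b_k$ so that no range of $k$ is lost. The crudest simplification $(1-p)^{k(n-k)}\le e^{-pkn/2}$, coming from $n-k\ge n/2$, is too wasteful for small $k$ (it would only yield the threshold for $c>2$), so one must keep the exact exponent $k(n-k)$ and the factor $p^{k-1}$ and exploit the geometric decay of $b_k$ in $k$ for $k\le n/4$, while the range of $k$ near $n/2$ is handled by the separate, very crude bound above. Everything else is a routine moment computation.
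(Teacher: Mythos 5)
The paper does not prove Theorem~\ref{thm: homo connected}; it is recorded in Section~\ref{sec: introduction} purely as classical background (the Erd\H{o}s--R\'enyi connectivity threshold), so there is no in-paper proof to compare against line by line. Your proposal is correct and complete: part (i) is the standard second-moment argument on isolated vertices (the computation of $\E[X(X-1)] = n(n-1)(1-p)^{2n-3}$ and the conclusion $\E[X^2]/(\E X)^2 \to 1$ are both right), and part (ii) is the standard first-moment/union bound over small components using Cayley's formula, with the ratio estimate $b_{k+1}/b_k \le enp(1-p)^{n-2k-1}$ correctly deriving geometric decay on $k \le n/4$ and a crude $e^{-\Omega(n\log n)}$ bound killing $n/4 < k \le n/2$. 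Your remark that the lazy replacement $(1-p)^{k(n-k)} \le e^{-pkn/2}$ would only give $c>2$ is also accurate; keeping the $p^{k-1}$ factor and exploiting the decay of $b_k$ in $k$ is exactly what restores the sharp threshold.

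It is worth noting how your argument relates to what the paper actually does for the inhomogeneous generalization. For the subcritical direction of Theorem~\ref{main thm: connectivity 1}(i), the paper uses precisely your isolated-vertex second-moment method, just restricted to the minimizing type $l$. Your supercritical argument (counting components of size $2 \le k \le n/2$ via spanning trees) is essentially the route the paper takes in Lemma~\ref{lemma: connectivity critical} for the \emph{critical} window $p = (\log n + c + o(1))/(bn)$. However, for the supercritical case of Theorem~\ref{main thm: connectivity 1}(ii) itself, the paper departs from this strategy: it instead runs the exploration processes of Section~\ref{sec: exploration}, shows via Lemma~\ref{lemma: 2nd easy dev} that the component of any fixed vertex grows to size $\ge \beta n^{1/2}\log n$ with probability $1-o(n^{-1})$, and then argues that any two such large clusters must meet. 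Your approach is more elementary and self-contained for the homogeneous case, but it does not readily extend when some $p_{kl}=0$, since then two large components concentrated on disjoint type classes cannot be shown to intersect by a raw cut-edge count; the paper's exploration-process route (together with Lemmas~\ref{lemma: changing vertex type} and~\ref{lemma: changing vertex type 2}) is designed to handle exactly that obstruction.
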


Note the scale of $p(n)$ for each transition; for the giant component we have $p = 1/n$, and for connectivity we have $p = \log n /n$. 

The Erd\"{o}s-R\'{e}nyi random graph is `homogeneous' in the sense that all the vertices are equivalent. For example, every vertex has the same degree distribution and the same chance of contributing to the largest component. However, this model is not realistic for our real world, since in practical situations, vertices have varying properties. A vertex may be tightly connected to some vertices while having fewer interactions with others. This observation, supported with real experiments and examples, led to the introduction of an `inhomogeneous' random graph model defined as follows.

\begin{definition}
\label{def: graph model 1}
Let $m$ be a positive integer, $\bm{a} = (a_1, \dots, a_m)$ a length $m$ vector with positive entries, and $\bm{c} = (c_{kl})_{1\leq k, l \leq m}$ a $m \times m$ symmetric matrix with non-negative entries. Consider simple graphs (i.e. graphs with no loops or multiple edges) with fixed labeled vertices 
\begin{eqnarray*}
V^{(1)} = \{ 1, \dots , a_1 n \}, \ V^{(2)} = \{ a_1 n +1, \dots , (a_1 + a_2 )n \}, &&\\
 \dots, \ V^{(m)} = \{ (a_1 + \dots + a_{m-1})n + 1 , &\dots& , (a_1 + \dots + a_m )n \}, 
\end{eqnarray*}
where each $V^{(k)}$ denotes the set of vertices of type $k = 1, \dots, m$.  
We denote by $\G (n, m, \bm{a}, \bm{c}, p(n))$ the random graph with vertices $V = V^{(1)} \cup \dots \cup V^{(m)}$ where an edge $xy\ (x \in V^{(k)}, y \in V^{(l)})$ exists with probability $p_{kl} = c_{kl}p$, independently of all other edges.
\end{definition}

Thus $\G (n, m, \bm{a}, \bm{c}, p(n))$ is a random graph with $m$ vertex types, and for each $k = 1, \dots, m$ there are $a_k n$ vertices of type $k$. And as the Erd\"os-R\'enyi model, all the edges are independent.
\begin{figure}[h]
    \centering
    \begin{minipage}[b]{0.3\textwidth}
        \centering
        \includegraphics[width=\textwidth]{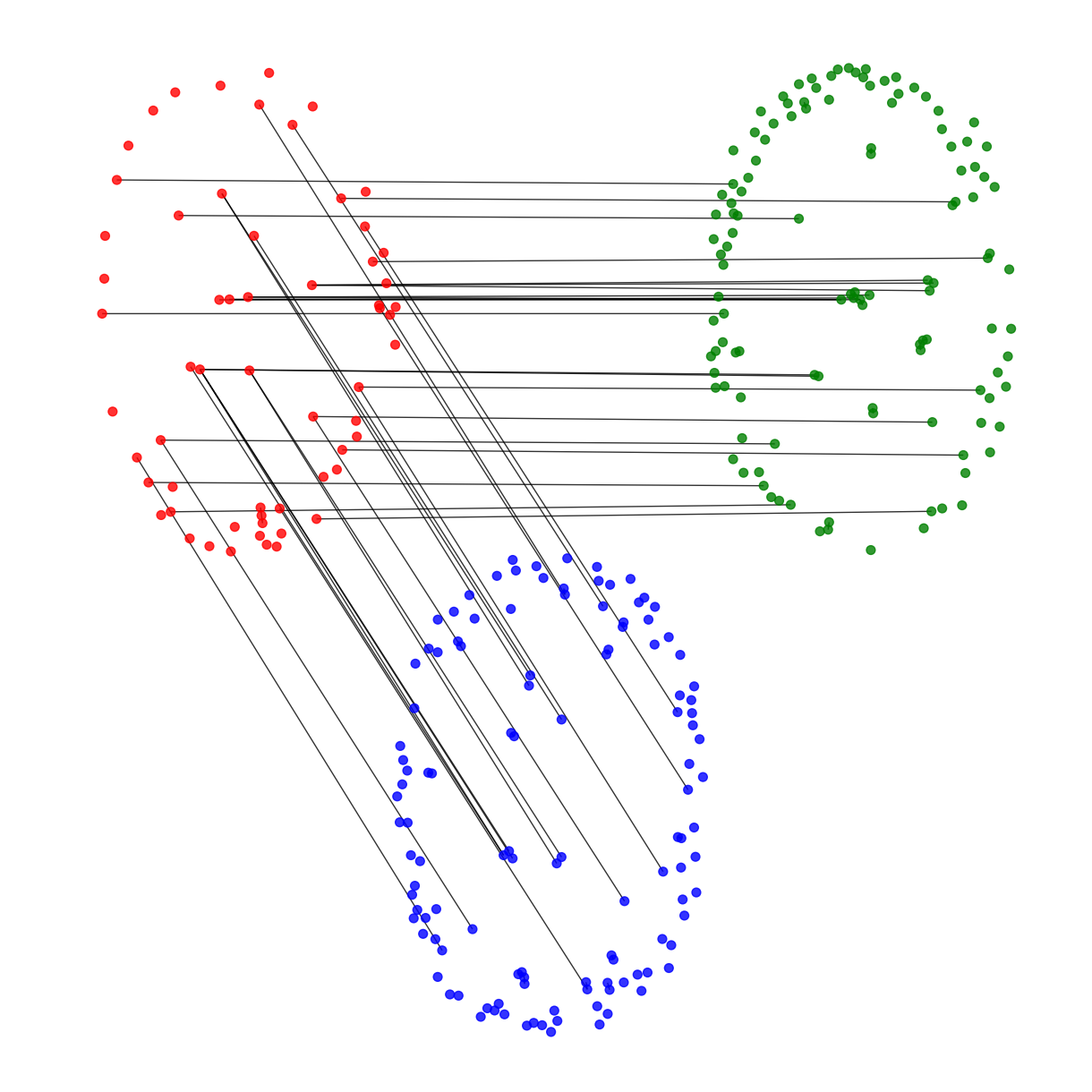}
        \caption{Model with $m=3$, $0.2\bm{c}$}
        \label{fig:image-b}
    \end{minipage}
    \hspace{0.02\textwidth} % Add some horizontal space between the images
    \begin{minipage}[b]{0.3\textwidth}
        \centering
        \includegraphics[width=\textwidth]{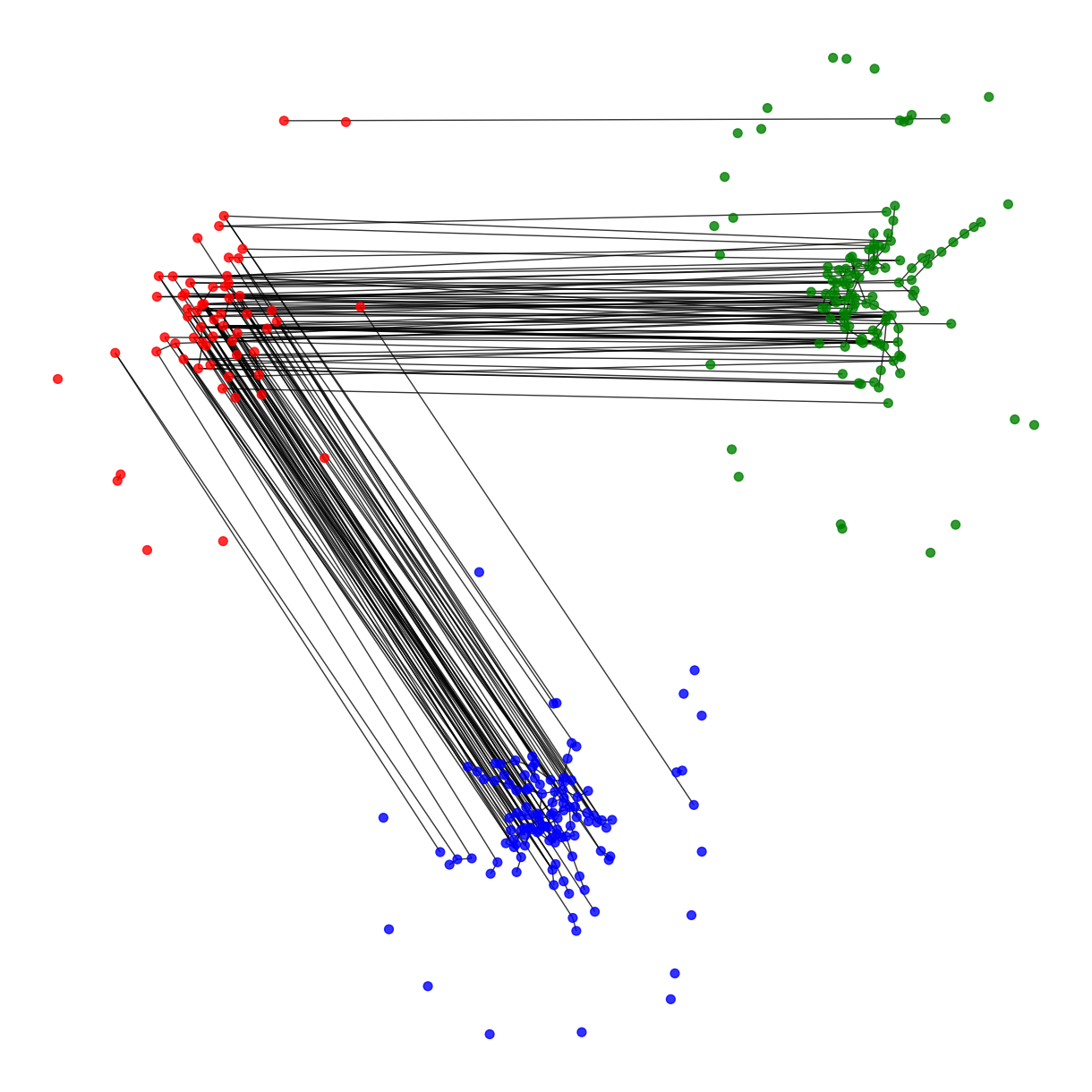}
        \caption{Model with $m=3$, $0.5\bm{c}$}
        \label{fig:image-b}
    \end{minipage}
    \hspace{0.02\textwidth} % Add some horizontal space between the images
    \begin{minipage}[b]{0.3\textwidth}
        \centering
        \includegraphics[width=\textwidth]{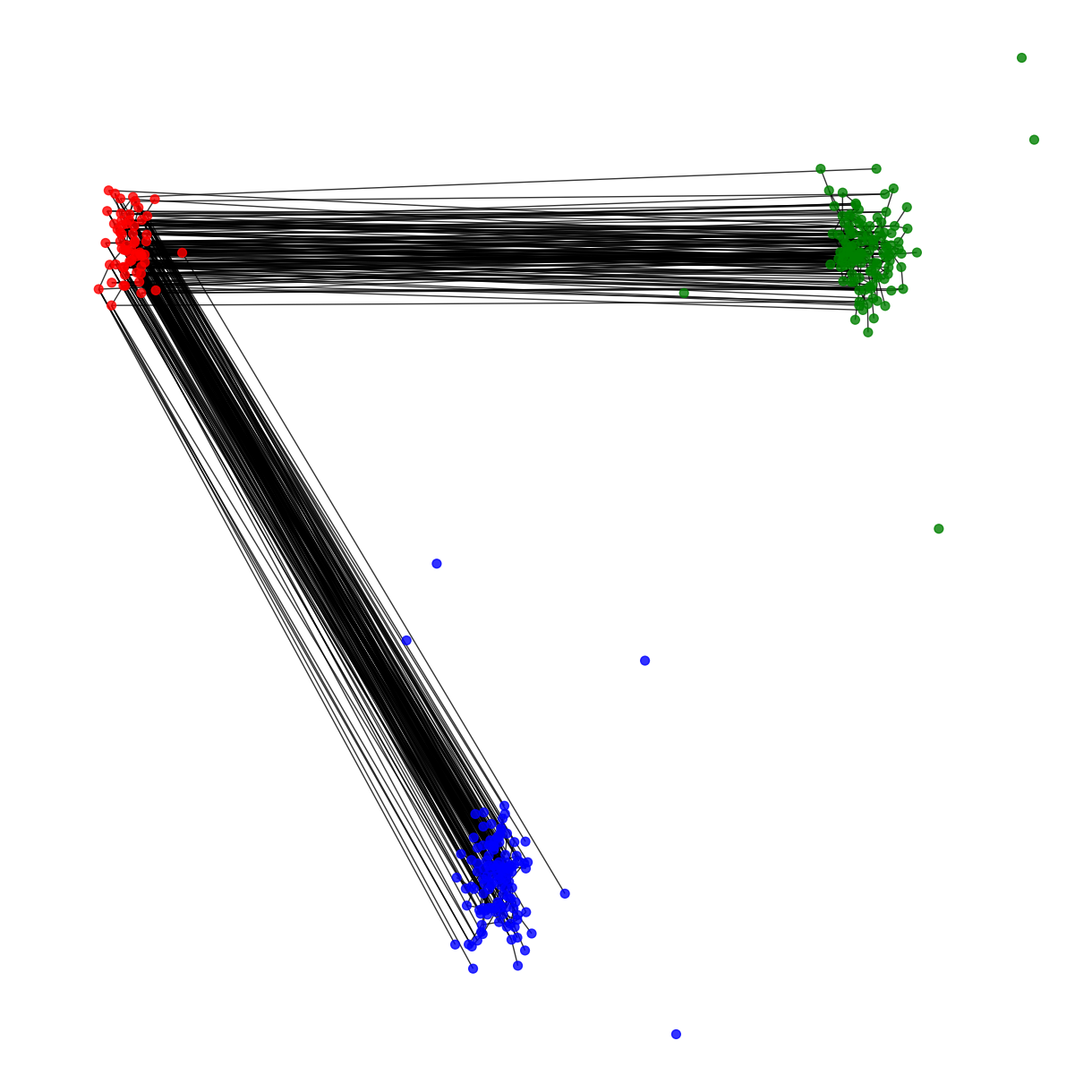}
        \caption{Model with $m=3$, $\bm{c}$}
        \label{fig:image-c}
    \end{minipage}
\end{figure}

There are numerous results on the properties of inhomogeneous random graph models and their variations. S\"{o}derberg \cite{Soderberg2003} stated the emergence of a giant component of inhomogeneous random graph models of finite types. Bollob\'as et al. \cite{Bollobas2007} introduced a highly generalized inhomogeneous model where the vertex types come from a separable metric space equipped with a Borel probability measure. The authors in \cite{Bollobas2007} proved the phase transition by establishing a correspondence between the model and a multi-type Galton-Watson branching process. They first dealt with models of finite types and proceeded to the generalized model by approximation (see Proposition 9.3 of \cite{Bollobas2007}). For connectivity, Alon \cite{Alon1995} showed that a phase transition appears for an even more general inhomogeneous model. Additional properties and references can be found in \cite{Bollobas2007}.

We study the emergence of the giant component and connectivity of the random graph model defined in Definition~\ref{def: graph model 1}. Both involve similar methods, and although our principal result is the transition in connectivity, we first deal with the phase transition because it requires more care in computations. Distinguished from \cite{Soderberg2003}, we present a rigorous and self-contained proof inspired from the method for the homogeneous case in \cite{Durrett2006}. The proof essentially uses random walks as a bound of a process that explores the components, specifically described in Section~\ref{sec: exploration}.

We want to find a threshold for $p(n)$ such that the model $\G = \G (n, m, \bm{a}, \bm{c}, p(n))$ has a giant component. Note that we allow $c_{kl}=0$, and if $c_{kl}=0$ for sufficiently many $k, l \in \{ 1 , \dots , m\}$ the graph $\G$ may be divided into components deterministically. In this case we may divide our random graph and view it as a disjoint union of several random graphs. Hence it is enough to find a threshold value for models satisfying the following assumption:

\begin{remark}
\label{main thm: assumption}
Consider the graph $G'$ obtained from our original random graph model \[\G (n, m, \bm{a}, \bm{c}, p(n))\] by collapsing each set $V^{(k)}$ into one vertex, and two vertices $k, l \in \{ 1, \dots , m \} $ in this collapsed graph $G'$ will be adjacent if and only if $c_{kl} >0$. 
We shall assume that this (deterministic) collapsed graph $G'$ is connected.
\end{remark}
We define a $m \times m$ matrix $\Lambda$ as
\[
\Lambda = (a_l c_{kl})_{1 \leq k, l \leq m}.
\]
Note that $\Lambda_{kl} = a_l c_{kl}$ is the expected number of type $l$ vertices connected to a single type $k$ vertex. 

\begin{definition}
\label{def: associated bp}
Given a random graph model $\G = \G (n, m, \bm{a}, \bm{c}, p(n))$ with $p(n) = \lambda / n$, we associate a multi-type branching process $\bm{Z}_t^{\G}(k)$ of $m$ types of particles, where $(k)$ indicates that the process starts with a single particle of type $k$. A type $l$ particle will have offspring distribution \[ \textrm{Poi}(\lambda a_1 c_{l1} ) \otimes \dots \otimes \textrm{Poi}(\lambda a_m c_{lm}). \] We write $q_k$ as the extinction probability of $\bm{Z}^{\G}_t(k)$.
\end{definition}

See Section~\ref{sec: mbp} for precise definitions of a multi-type branching process and related concepts.

We first write the results shown in \cite{Bollobas2007} (See Example 4.3, Theorems 3.1, 3.12, 9.10).

\begin{theorem}
\label{bollobas thm}
Let $p(n) = \lambda / n$ where $\lambda > 0$ is fixed. Suppose that $\G = \G (n, m, \bm{a}, \bm{c}, p(n))$ satisfies the assumption in Remark~\ref{main thm: assumption}.
\begin{myenumerate}
\item If $\lambda \| \Lambda \| \leq 1$, then $|\C_1(\G)| /n \to 0$ in probability. Furthermore, if $\lambda \| \Lambda \| < 1$, we have $|\C_1(\G)| \leq C\log n$ w.h.p. for some $C>0$.
\item If $\lambda \| \Lambda \| >1$, then $|\C_1(\G)| / \alpha n \to 1$ in probability, where $\alpha = \sum_{k=1}^{m}a_k q_k$ and $q_k$ is the extinction probability from Definition~\ref{def: associated bp}. Moreover, we have 
\[ |\C_1(\G) \cap V^{(k)}| / a_k q_k n \to 1\]
in probability for $k = 1, \dots, m$. 
\item In the setting of case (ii), the second largest component has order less than $C \log n$ for some $C>0$.
\end{myenumerate}
\end{theorem}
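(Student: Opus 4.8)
The plan is to study $\G$ through the breadth-first exploration of its components set up in Section~\ref{sec: exploration}. Starting from a vertex, one maintains the set of \emph{active} vertices (found but not yet processed); at step $t$ one processes an active vertex $v$, exposes all not-yet-revealed edges at $v$, and activates the newly found neighbours. Write $\C(v)$ for the component of $v$, and let $\bm{Y}_t$ record the number of active vertices of each type after $t$ steps; the exploration ends exactly when $\bm{Y}_t = \bm{0}$, at which point $t = |\C(v)|$. Conditionally, processing a type-$k$ vertex creates $\mathrm{Bin}(N^{(t)}_l, c_{kl}\lambda/n)$ new type-$l$ vertices, where $a_l n - t \le N^{(t)}_l \le a_l n$ is the current number of unexplored type-$l$ vertices. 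Hence the exploration is stochastically dominated from above by the multi-type branching process in which a type-$k$ particle has offspring $\bigotimes_{l=1}^m \mathrm{Bin}(a_l n, c_{kl}\lambda/n)$ --- whose mean matrix is exactly $\lambda\Lambda$ and which converges as $n \to \infty$ to the process $\bm{Z}^{\G}(k)$ of Definition~\ref{def: associated bp} --- and, as long as at most $\varepsilon n$ vertices have been explored, dominated from below by the analogous process with offspring $\bigotimes_{l=1}^m \mathrm{Bin}((a_l - \varepsilon)n, c_{kl}\lambda/n)$, whose mean matrix is $\lambda\Lambda$ up to an $O(\varepsilon)$ perturbation. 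By Remark~\ref{main thm: assumption} the matrix $\Lambda$ is irreducible, so Perron--Frobenius (Section~\ref{sec: mbp}) gives a strictly positive left eigenvector $\bm{u}$ with eigenvalue $\|\Lambda\|$; the scalar process $W_t := \langle \bm{u}, \bm{Y}_t \rangle$ vanishes iff $\bm{Y}_t = \bm{0}$, has conditional increment drift $(\lambda\|\Lambda\| - 1)u_k + O(t/n)$ when a type-$k$ vertex is processed, and has sub-exponential upper tails. This reduces the analysis to a one-dimensional random walk whose drift has the sign of $\lambda\|\Lambda\| - 1$.

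For part (i), if $\lambda\|\Lambda\| < 1$ the exploration is dominated by a subcritical multi-type branching process with mean matrix $\lambda\Lambda$, whose total progeny has an exponentially decaying tail; thus $\prob(|\C(v)| \ge C\log n) \le n^{-2}$ for $C$ large, and a union bound over the $n$ choices of $v$ gives $|\C_1(\G)| \le C\log n$ w.h.p. If $\lambda\|\Lambda\| = 1$ the dominating process is critical with finite-variance offspring, so its total progeny exceeds $t$ with probability $O(t^{-1/2})$; taking $t = n^{2/3}$ gives $\E[\#\{v : |\C(v)| \ge n^{2/3}\}] = O(n^{2/3})$, and since a component of size $\ge \eta n$ forces at least $\eta n$ such vertices, Markov's inequality yields $|\C_1(\G)|/n \to 0$ in probability.

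The heart of part (ii) is a large-deviations lemma: whenever $\mu\|\Lambda\| > 1$, w.h.p.\ every component of $\G(n,m,\bm{a},\bm{c},\mu/n)$ has size at most $K_0\log n$ or at least $\delta n$, for suitable constants $K_0, \delta > 0$ (``a reasonably large component is always significantly larger''). This is read off the walk $W_t$: while $t \le \varepsilon n$ its drift is at least some $\beta > 0$, so if the walk survives $K_0\log n$ steps it reaches height $\ge \tfrac{1}{2}\beta K_0 \log n$ w.h.p.\ (a Chernoff bound), and from that height an exponential-supermartingale estimate shows it fails to return to $0$ within the next $\delta n$ steps with probability $\le n^{-3}$; a union bound over starting vertices finishes the lemma. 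In parallel, the two-sided branching-process comparison gives $\prob(|\C(v)| \ge K\log n) \to q_k$ for $v \in V^{(k)}$: the exploration reaches size $K\log n$ essentially iff the sandwiching processes survive, and their survival probabilities both converge to that of $\bm{Z}^{\G}(k)$. A first- and second-moment computation --- the covariance of the two events for vertices $v, w$ being small because exploring $\C(v)$ up to size $K\log n$ perturbs the exploration from $w$ only through an $O(\log n / n)$ change in the vertex pool --- then yields $\#\{v \in V^{(k)} : |\C(v)| \ge K\log n\} = a_k q_k n (1 + o(1))$ w.h.p. Applying the large-deviations lemma to $\G$ itself with $K > K_0$ identifies $\{v : |\C(v)| \ge K\log n\}$ with the union of the (at most $1/\delta$) components of size $\ge \delta n$, whose total size is therefore $\alpha n(1 + o(1))$.

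It remains to show this union is a single component; this is the step I expect to be the main obstacle, as it requires both a sprinkling argument and control of the type-composition of macroscopic components. Split $\G = G_1 \cup G_2$ with $G_1 = \G(n,m,\bm{a},\bm{c},(1-\varepsilon')\lambda/n)$ still supercritical and $G_2$ carrying the remaining $\Theta(\varepsilon' n)$ edges. Applying the large-deviations lemma and the count to $G_1$: its macroscopic components $D_1, \dots, D_s$ (with $s = O(1)$) have total size $\ge (\alpha - \eta)n$ once $\varepsilon'$ is small enough, using continuity of $q_k$ in $\lambda$; and, coupling the exploration of each $D_i$ with the associated supercritical branching process conditioned to survive --- whose type-empirical distribution converges to the positive Perron eigenvector --- each $D_i$ contains at least $\varepsilon_0 n$ vertices of every type. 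Fix any $(k,l)$ with $c_{kl} > 0$, which exists by Remark~\ref{main thm: assumption}: for each pair $i \ne j$ there are at least $\varepsilon_0^2 n^2$ potential $G_2$-edges between the type-$k$ vertices of $D_i$ and the type-$l$ vertices of $D_j$, hence at least one is present with probability $1 - e^{-\Omega(n)}$; a union bound over the $O(1)$ pairs shows $D_1, \dots, D_s$ lie in one component $C^{*}$ of $\G$, so $|\C_1(\G)| \ge |C^{*}| \ge (\alpha - \eta)n$. On the other hand $|\C_1(\G)| \le \#\{v : |\C(v)| \ge K\log n\} = \alpha n (1 + o(1))$ since $|\C_1(\G)| \ge K\log n$, so $|\C_1(\G)|/\alpha n \to 1$. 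Finally, $\#\{v : |\C(v)| \ge K\log n\} - |\C_1(\G)| = o(n)$ together with the large-deviations lemma leaves no room for a second component of size $\ge K\log n$, which is part (iii); and the refined count $\#\{v \in V^{(k)} : |\C(v)| \ge K\log n\} = a_k q_k n(1+o(1))$ then coincides with $|\C_1(\G) \cap V^{(k)}|(1 + o(1))$, the last assertion of (ii).
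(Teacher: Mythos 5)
Note at the outset that the paper does not actually re-prove Theorem~\ref{bollobas thm}: it is quoted from \cite{Bollobas2007} purely for comparison, and the paper's self-contained argument is the proof of Theorem~\ref{main thm: component}, which covers the strict cases $\lambda\mu<1$ and $\lambda\mu>1$ but not the boundary $\lambda\|\Lambda\|=1$. Your proposal shares the paper's skeleton --- upper/lower exploration processes, the scalar walk $\langle\bm{u},\cdot\rangle$ built from the Perron eigenvector, a large-deviations dichotomy, a branching-process comparison giving $\prob(|\C_x|\ge K\log n)\to 1-q_k$, and a second-moment count --- but differs in how uniqueness of the giant component is obtained. You use sprinkling: split $\G=G_1\cup G_2$ with $G_1$ still supercritical, show each macroscopic $G_1$-component carries $\Omega(n)$ vertices of every type, and merge them with $G_2$-edges. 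The paper instead (Step 3) runs the lower exploration to time $n^{2/3}$, deduces that any cluster surviving past $\beta\log n$ steps has $\Omega(n^{2/3})$ active vertices, uses Lemma~\ref{lemma: changing vertex type} to force $\Omega(n^{2/3})$ vertices of two compatible types $l,l'$ with $c_{ll'}>0$, and concludes directly that two such clusters meet with probability $1-\exp(-\Omega(n^{1/3}))$ --- no sprinkling, no appeal to continuity of $q_k$ in $\lambda$. Your route is closer in spirit to the original argument in \cite{Bollobas2007}; the paper's is the more elementary and is the route you should compare against if you want a proof in the present framework.

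Two places need firming up. The claim that each macroscopic component $D_i$ contains $\varepsilon_0 n$ vertices of every type cannot be justified via coupling with the branching process conditioned to survive, because that coupling deteriorates once $\Omega(n)$ vertices have been explored; it only controls the type composition of a $o(n)$ initial segment of the exploration. The right tool in this paper's toolbox is Lemma~\ref{lemma: changing vertex type 2}: a component with $\Omega(n)$ type-$k$ vertices has $\Omega(n)$ type-$l$ vertices for each $l$ with $c_{kl}>0$, and connectivity of the collapsed graph $G'$ from Remark~\ref{main thm: assumption} propagates this to all types. Second, your treatment of $\lambda\|\Lambda\|=1$ rests on the bound $\prob(T>t)=O(t^{-1/2})$ for the total progeny of a critical multi-type branching process with finite variance; this is true under positive regularity but is a nonelementary result that would need to be cited separately, and nothing proved in the paper implies it. If you restrict to the strict inequalities, the remaining argument is sound once the type-composition step is repaired as above.
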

Here $\| A \|$ is defined by
\[ \|A\| = \sup \{\|A x \| : \| x \| \leq 1 \}, \]
where $\| x \| ^2 = \sum_{k=1}^{m} a_k x_k ^2 $. 
%In our case, $A$ shall always be a symmetric matrix with non-negative entries, hence $\| A \|$ is the largest eigenvalue of $A$. 

The number of adjacent vertices of a fixed vertex follows a linear combination of binomial distributions. In \cite{Bollobas2007}, the authors use a standard exploration process (see Section~\ref{sec: exploration}) and squeeze the binomial distributions with Poisson distributions. Then, they couple the exploration process with branching processes associated to these Poisson distributions and use properties of the branching processes to approximate $\prob[x \in B]$, where $B$ is the union of big components. To summarize, they squeeze the union of big components by branching processes, a relatively simple step, and thoroughly use the properties of branching processes. The advantage of this approach, first deriving extensive properties of branching processes and readily verifying graphical properties, is that it is applicable to graphical properties other than the component size.

In this article, we present an alternative proof of the phase transition of the giant component. Instead of heavily using branching processes, we mainly use a random walk and large deviations lemmas. Our result is the following.

\begin{theorem}
\label{main thm: component}
Let $p(n) = \lambda / n$ where $\lambda > 0$ is fixed. Suppose that $\G = \G (n, m, \bm{a}, \bm{c}, p(n))$ satisfies the assumption in Remark~\ref{main thm: assumption}. Let $\mu$ be the largest eigenvalue of $\Lambda$ (See Remark~\ref{rmk: mbp}). 
\begin{myenumerate}
\item If $\lambda \mu < 1$, there exists a $C>0$ such that $|\C_1(\G)| \leq C\log n$ w.h.p. 
\item If $\lambda \mu >1$, then $|\C_1(\G)| / \alpha n \to 1$ in probability, where $\alpha = \sum_{k=1}^{m}a_k q_k$. Moreover, we have 
\[ |\C_1(\G) \cap V^{(k)}| / a_k q_k n \to 1 \]
in probability and $q_k <1$ for all $k=1,\dots,m$. In addition, all the other components have size~${\leq \beta \log n}$ w.h.p., where $\beta > 0$ is a constant completely determined by $m, \Lambda$ and $\lambda$.
\end{myenumerate}
\end{theorem}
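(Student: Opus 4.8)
The plan is to transfer the classical ``one big blob'' argument of Durrett for the homogeneous case to the multi-type setting, using the exploration process of Section~\ref{sec: exploration} and the random walk that bounds it. Concretely, at each step of the exploration of a component, the number of newly revealed vertices is, conditionally on the type of the vertex being explored, a sum of independent binomials $\Bin{|V^{(l)}| - (\text{already seen})}{c_{kl}\lambda/n}$ over $l = 1, \dots, m$. I would first set up two coupled walks: a lower-bounding walk $\underline{S}_t$ that uses slightly fewer vertices (say, assumes a $(1-\varepsilon)$-fraction of each type is still available, valid as long as the component has not yet exhausted that fraction) and an upper-bounding walk $\overline{S}_t$ that uses all $a_l n$ vertices. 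The key linear-algebraic input is Remark~\ref{rmk: mbp}: since $\Lambda$ has non-negative entries and (by Remark~\ref{main thm: assumption}) is irreducible, Perron--Frobenius gives a strictly positive left eigenvector $\bm{u}$ for $\mu$, and I would track not the raw increment but the $\bm{u}$-weighted count $\sum_k u_k \cdot(\#\text{type-}k\text{ vertices found})$, whose conditional drift is exactly $\lambda\mu$ times the current weight. This is what makes the walk behave like the homogeneous walk with parameter $\lambda\mu$.

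For part (i), $\lambda\mu < 1$: the $\bm{u}$-weighted exploration walk started from any vertex is a supermartingale-like object with negative drift $\lambda\mu - 1 < 0$ (after accounting for the $-1$ from removing the explored vertex), up to an $O(t^2/n)$ correction from depletion that is negligible while $t = O(\log n)$. A standard exponential/Chernoff tail bound on the hitting time of $0$ then shows each component has size $\le C\log n$ with probability $1 - o(1/n)$, and a union bound over the $n$ starting vertices finishes it. The constant $C$ comes out of the Cramér rate function for the dominating random walk, exactly as in Durrett.

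For part (ii), $\lambda\mu > 1$: this is the substantive direction and I would follow the three-phase scheme. \textbf{Phase 1 (survival probability):} couple the exploration from a fixed vertex $x$ of type $k$ with the branching process $\bm{Z}^{\G}_t(k)$ of Definition~\ref{def: associated bp}; since $\lambda\mu > 1$ the process is supercritical, so with probability $1 - q_k > 0$ the walk survives past time $K\log n$ for a suitable $K$. Note $q_k < 1$ follows from irreducibility of the mean matrix together with $\mu > 1/\lambda$. \textbf{Phase 2 (large components are giant --- the large-deviations step):} this is the heart of the argument and the main obstacle. I would show that once a component has reached size $\eta\log n$ for suitable $\eta$, it is overwhelmingly likely (probability $\ge 1 - o(1/n)$, or at least $1 - n^{-2}$) to reach size $\delta n$: while $t$ is between $\eta\log n$ and $\delta n$, the lower walk $\underline{S}_t$ still has strictly positive drift bounded below by a constant, and a large-deviations estimate (the ``reasonably large component is always significantly larger'' lemma advertised in the abstract) gives the required super-polynomially small probability of dying in this window. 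The delicate point is that the drift degrades linearly as vertices are consumed, so one must run this only up to $\delta n$ with $\delta$ small enough that the depletion correction stays below half the drift; getting the bookkeeping of ``vertices of each type already used'' right, uniformly in the starting vertex and without the walk having already swallowed a disproportionate share of one type, is where the care goes. \textbf{Phase 3 (uniqueness and the exact fraction):} a sprinkling / two-round exposure argument shows any two components of size $\ge \delta n$ must merge w.h.p., so there is a unique giant; then the second-moment bound on $\sum_x \mathbf{1}_{|\C(x)| \ge \eta\log n}$ combined with the branching-process survival probabilities $1 - q_k$ identifies $|\C_1(\G)\cap V^{(k)}|/(a_k q_k n) \to 1$ --- here I also need that a vertex in a ``large but not giant'' component has vanishing probability, which again is Phase 2. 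Finally, the same negative-drift argument as in part (i), now applied to the exploration run on the graph with the giant component removed (equivalently, conditioned to stay in the complement), yields that every other component has size $\le \beta\log n$, with $\beta$ explicit from the Cramér rate function of the subcritical dominating walk.

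I expect Phase 2 to be the main obstacle: making the large-deviations bound uniform over starting vertices and over the possible ``type-profiles'' of the partially explored component, while controlling the quadratic depletion term, is the technically heaviest part, and it is exactly the place where the multi-type structure (via the Perron eigenvector weighting) must be handled carefully rather than quoted from the homogeneous case.
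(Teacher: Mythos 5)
Your overall architecture --- exploration process bounded above and below by random walks, tracking the Perron-eigenvector-weighted count $\bm{u}\bm{S}_t$ so the drift is governed by $\lambda\mu$, exponential supermartingale for the subcritical case, and for the supercritical case a three-phase scheme of survival coupling plus a large-deviations ``big components are huge'' step plus a merger and second-moment argument --- is essentially the paper's proof. But there is a genuine gap at the merger step, and it is precisely the multi-type subtlety the paper is at pains to handle.

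You write that a ``sprinkling / two-round exposure argument shows any two components of size $\ge \delta n$ must merge w.h.p.'' In the homogeneous model this is immediate: two disjoint vertex sets of size $\Theta(n^{2/3})$ each fail to touch with probability $(1-p)^{\Theta(n^{4/3})} = o(1)$. Here it can fail outright, because the model allows $c_{kl}=0$. After Phase~2 you know each of two large components has $\Theta(n^{2/3})$ active vertices, hence $\Theta(n^{2/3})$ of some single type each, say type $k$ for the first and type $l$ for the second; if $c_{kl}=0$ the naive disjointness estimate gives nothing. The paper resolves this with Lemma~\ref{lemma: changing vertex type}: if a component owns $\Theta(n^{2/3})$ vertices of type $k$, then for any $l'$ with $c_{kl'}>0$ it w.h.p.\ also owns $\Theta(n^{2/3})$ vertices of type $l'$, and iterating this along a path in the (connected) collapsed graph $G'$ of Remark~\ref{main thm: assumption} eventually lands both components on types that do interact. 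Without an argument of this kind --- a sprinkling scheme would face the identical obstruction, since the extra edges you expose are still governed by the same $c_{kl}$ --- Phase~3 is incomplete.

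Two smaller remarks. First, in the large-deviations step (your Phase~2), the quantity that admits a clean Chernoff bound is $Z := \bm{u}\bm{S}_t - \bm{u}\bm{S}_0 + I_t$ where $I_t = \bm{u}\sum_{i<t}\indcbmi$; unlike the homogeneous case, $I_t$ is a genuine random variable (it records the types of vertices explored so far), so one cannot pass from concentration of $Z$ to concentration of $\bm{u}\bm{S}_t-\bm{u}\bm{S}_0$ without a separate Hoeffding bound on $I_t$ around its mean. The paper inserts exactly this bound in Step~2. Second, $q_k<1$ does \emph{not} follow merely from irreducibility of $\lambda\Lambda$ and $\lambda\mu>1$ if you invoke the standard multitype extinction criterion (e.g.\ Harris, Theorem~7.1), which requires positive regularity (primitivity), not irreducibility --- a bipartite type structure with $c_{kk}=0$ for all $k$ is irreducible but not primitive. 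The paper gets $q_k<1$ in full generality only at the end of Step~5, as a consequence of Lemma~\ref{lemma: changing vertex type 2} and the already-established size of the giant, rather than as a branching-process input.
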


\begin{remark}
We note that Theorem~\ref{bollobas thm} and Theorem~\ref{main thm: component} assert the same critical value for $\lambda$ since $\| \Lambda \| = \mu$. This is easily seen by observing that $\Lambda$ is a self adjoint operator with respect to the space $V = \R^m$ equipped with the inner product $\langle x,y\rangle = \sum_{k=1}^m a_k x_k y_k$, and applying the spectral theorem.
\end{remark}

% --------------------------------------------------------------------------------------------------------------------------------------------------------------------

The next result is on the threshold for connectivity. As our previous illustration, if $c_{kl} = 0$ for many $k, l \in \{ 1 , \dots , m\}$ the graph is disconnected with probability 1. Hence we shall again assume Remark~\ref{main thm: assumption}.

We first state the result of Alon \cite{Alon1995} which is applicable to more general inhomogeneous models.

\begin{theorem}
\label{alon thm}
Let $G = (V, E)$ be a simple graph with probability $p_e \in [0,1]$ assigned to each edge $e \in E$. Denote by $G_p$ the random graph obtained from $G$ by deleting each edge $e \in E$ with probability $1-p_e$, independently and randomly. \\
Then, for every $b>0$, there exists a constant $c = c(b) >0$ such that if for every nontrivial $S \subset V$
\[ \sum_{x \in S, y \in V \setminus S} p_{xy} \geq c \log n, \]
then the probability that $G_p$ is connected is at least $1- n^{-b}$.
\end{theorem}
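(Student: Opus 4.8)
The plan is a union bound over cuts, organised by cut weight. For a nontrivial $S\subset V$ set $f(S)=\sum_{x\in S,\,y\in V\setminus S}p_{xy}$; the hypothesis says exactly that the minimum cut weight $\lambda:=\min_{\emptyset\neq S\subsetneq V}f(S)$ satisfies $\lambda\geq c\log n$. (In particular $\lambda>0$, so $G$ itself is connected and all the terminology below makes sense.) Now $G_p$ is disconnected if and only if some cut $(S,V\setminus S)$ has no surviving edge, and that event has probability $\prod_{x\in S,\,y\notin S}(1-p_{xy})\leq e^{-f(S)}$, so
\[
\prob[\,G_p\text{ disconnected}\,]\ \leq\ \sum_{(S,\,V\setminus S)} e^{-f(S)},
\]
the sum being over all cuts. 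The naive estimate $e^{-f(S)}\leq e^{-\lambda}$ is hopeless by itself since there are $\sim 2^{n}$ cuts; what makes the sum converge is that cuts of small weight are \emph{rare}.

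The quantitative input I would use is the weighted form of Karger's cut-counting bound: in any weighted graph on $n$ vertices with minimum cut weight $\lambda$, the number of cuts of weight at most $\alpha\lambda$ is at most $n^{2\alpha}$ for every real $\alpha\geq 1$. Writing $N(t)$ for the number of cuts of weight $\leq t$, we have $N(t)=0$ for $t<\lambda$ and $N(t)\leq n^{2t/\lambda}$ for $t\geq\lambda$, so, using $e^{-f(S)}=\int_{f(S)}^{\infty}e^{-t}\,dt$ and interchanging sum and integral,
\[
\sum_{(S,\,V\setminus S)} e^{-f(S)}\ =\ \int_{0}^{\infty} e^{-t}\,N(t)\,dt\ \leq\ \int_{\lambda}^{\infty} e^{-t}\,n^{2t/\lambda}\,dt\ =\ \frac{n^{2}\,e^{-\lambda}}{\,1-2\ln n/\lambda\,},
\]
which is finite and $O_{c}(n^{2}e^{-\lambda})$ once $\lambda\geq c\log n$ with $c$ exceeding a small absolute constant. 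Since $e^{-\lambda}\leq e^{-c\log n}$ is a fixed negative power of $n$ whose exponent grows linearly in $c$, the right-hand side is at most $n^{-b}$ as soon as $c=c(b)$ is chosen large enough (concretely $c\geq b+3$ works when $\log$ is the natural logarithm). If one prefers to avoid the integral, the same estimate follows from a dyadic decomposition over $\alpha\in\{2^{j}\}_{j\geq0}$: the cuts of weight in $[2^{j}\lambda,2^{j+1}\lambda)$ number at most $n^{2^{j+2}}$ and each contributes at most $e^{-2^{j}\lambda}\leq n^{-c\,2^{j}}$, so the bound is a geometric-type series dominated by its first term $n^{4}e^{-\lambda}$.

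The substantive part of the argument is the cut-counting bound itself; everything else is a one-line computation. I would either cite Karger's contraction-algorithm analysis and observe that it requires no change for real edge weights, or include the short self-contained version that is actually needed: run $n-2$ rounds of random edge contraction, each round contracting an edge chosen with probability proportional to its weight, and show a fixed cut of weight $\alpha\lambda$ survives all rounds with probability $\Omega(n^{-2\alpha})$ — the key point being that an $\ell$-vertex weighted multigraph has total edge weight at least $\ell\lambda/2$, so a given cut of weight $w$ is contracted in that round with probability at most $w/(\ell\lambda/2)$; summing the trivial lower bound $1$ over all cuts of weight $\leq\alpha\lambda$ against these survival probabilities yields the count $n^{2\alpha}$. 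The only other points needing care are routine: that $G$ is connected so ``cut'' is meaningful (immediate from $\lambda>0$), that an unordered cut is double-counted by the sum over ordered $S$ (a harmless factor $2$), and that the Riemann--Stieltjes manipulation is valid because $N$ is a non-decreasing step function with $N(0)=0$.
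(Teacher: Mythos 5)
The paper does not prove this theorem; it is quoted from Alon (1995) and used only as a black box to motivate the $\Theta(\log n/n)$ scaling for connectivity, so there is no internal proof to compare your argument against. Taken on its own, your proof is correct and is the standard route: the union bound over cuts, the elementary estimate $\prod_{x\in S,\,y\notin S}(1-p_{xy})\le e^{-f(S)}$, and the weighted Karger cut-counting bound $|\{S : f(S)\le\alpha\lambda\}|\le n^{2\alpha}$ combine exactly as you describe, and the integral (or equivalently dyadic) bookkeeping yields $\prob[G_p\text{ disconnected}] = O\bigl(n^{2}e^{-\lambda}\bigr)$ once $\lambda\ge c\log n$ with $c$ bounded away from $2$, whence $c=c(b)$ can be chosen explicitly. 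Your side remarks are also accurate: positivity of the minimum cut weight forces $G$ itself to be connected; the factor of two from ordered versus unordered cuts is harmless; the Stieltjes manipulation is legitimate since $N$ is a nondecreasing step function vanishing on $[0,\lambda)$; and the contraction-algorithm proof of the cut-counting lemma genuinely requires no change for real edge weights, since the only inputs are the degree lower bound $\deg(v)\ge\lambda$ and the total-weight estimate $\ge \ell\lambda/2$ on $\ell$ vertices. One small clarification worth making explicit in a writeup: read $p_{xy}=0$ for $xy\notin E$, so that $f(S)$ is a sum over all pairs across the cut and the product bound is taken over all such pairs rather than just edges of $G$; this also makes the hypothesis $f(S)\ge c\log n$ meaningful for every nontrivial $S$ without reference to the edge set.
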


Theorem~\ref{alon thm} is consistent with the case of homogeneous models in the sense that it has the same scale for $p$; it implies that $ p = \alpha \log n /n$ for some $\alpha>0$ gives connectedness. However, Theorem~\ref{alon thm} does not give the exact threshold value for connectivity nor additional information on the critical regime. We shall use similar methods as in our analysis of the giant component to find the exact critical value $p$ for connectedness.

\begin{theorem}
\label{main thm: connectivity 1}
Let $p(n) = \frac{\alpha \log n}{n}$. Define $b_k = \sum_{l=1}^{m} a_l c_{kl}$ for $k = 1, \dots , m$ and let \[b = \min \{b_1, \dots, b_m\}.\] Suppose that $\G = \G (n, m, \bm{a}, \bm{c}, p(n))$ satisfies the assumption in Remark~\ref{main thm: assumption}. 
\begin{myenumerate}
\item If $\alpha b <1$, then $\G$ is not connected w.h.p.
\item If $\alpha b > 1$, then $\G$ is connected w.h.p.
\end{myenumerate}
\end{theorem}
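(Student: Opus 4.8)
The plan is to reduce connectivity to a statement about component sizes. The components of $\G$ partition its vertex set, so if $\G$ is disconnected it has a component containing at most half of its $N:=(a_1+\dots+a_m)n$ vertices. Hence for (ii) it suffices to show that w.h.p.\ $\G$ has no component of size in $[1,N/2]$, which I would split into a ``small'' range $1\le s\le\delta N$ treated by a first moment bound and a ``large'' range $\delta N\le s\le N/2$ treated by the exploration process of Section~\ref{sec: exploration}; for (i) it suffices to produce an isolated vertex w.h.p.

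For (i), fix a type $k^{\ast}$ with $b_{k^{\ast}}=b$ and let $X$ count the isolated vertices of type $k^{\ast}$. A vertex $x\in V^{(k^{\ast})}$ is isolated with probability $(1-c_{k^{\ast}k^{\ast}}p)^{a_{k^{\ast}}n-1}\prod_{l\neq k^{\ast}}(1-c_{k^{\ast}l}p)^{a_l n}$, and since $p=\alpha\log n/n$ this equals $n^{-\alpha b}(1+o(1))$; hence $\E[X]=a_{k^{\ast}}n^{1-\alpha b}(1+o(1))\to\infty$ because $\alpha b<1$. For distinct $x,y\in V^{(k^{\ast})}$ the events ``$x$ isolated'' and ``$y$ isolated'' concern disjoint sets of potential edges except for the single edge $xy$, so $\prob(x,y\text{ isolated})=(1-c_{k^{\ast}k^{\ast}}p)^{-1}\prob(x\text{ isolated})^{2}=(1+o(1))\prob(x\text{ isolated})^{2}$; therefore $\mathrm{Var}(X)/\E[X]^{2}\to 0$ and Chebyshev's inequality gives $\prob(X=0)\to 0$. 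Thus $\G$ has an isolated vertex and is disconnected w.h.p.

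For (ii) in the small range, bound the probability that $\G$ has a component of size $s$ by a union bound over vertex sets $S$ with $|S|=s$ of the event that no edge joins $S$ to its complement. If $S$ has $s_k$ vertices of type $k$, this probability is $\prod_{k,l}(1-c_{kl}p)^{s_k(a_l n-s_l)}\le\exp\big(-p\sum_k s_k(b_k n-\textstyle\sum_l c_{kl}s_l)\big)\le\exp\big(-\alpha s\log n\,(b-C\delta)\big)$ for a constant $C=C(\bm{a},\bm{c})$, using $\sum_k s_k b_k\ge bs$. Since $\alpha b>1$, choose $\delta$ small enough that $\alpha(b-C\delta)>1$; with $\binom{N}{s}\le(eN/s)^{s}$ the $s$-th term is then at most $n^{-\eta s}$ for some $\eta>0$ and all large $n$, so summing over $s\ge 1$ gives $o(1)$, dominated by the $s=1$ (isolated vertex) term. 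Hence w.h.p.\ $\G$ has no component of size in $[1,\delta N]$.

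The main obstacle is the large range $\delta N\le s\le N/2$, where the union bound over $S$ is hopeless. Here I would reuse the exploration process and the large deviation lemmas of Section~\ref{sec: exploration} exactly as in the proof of Theorem~\ref{main thm: component}, but with $p=\alpha\log n/n$. Running the exploration from a fixed vertex $v$ and letting $A_t$ be the number of active vertices after $t$ steps, each step removes one active vertex and, when the explored vertex has type $k$, adds $\Bin{U_t^{(l)}}{c_{kl}p}$ active vertices of each type $l$, where $U_t^{(l)}$ is the number of still-undiscovered type-$l$ vertices. As long as the total undiscovered count exceeds $\delta N$, the one-step drift of $A_t$ is at least $c\log n$ for a constant $c=c(\delta,\Lambda)>0$; proving this uniformly is the delicate point, because the undiscovered mass could a priori concentrate on types not adjacent, in the collapsed graph of Remark~\ref{main thm: assumption}, to those already explored. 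One rules this out using the connectivity assumption: if a type-$k$ vertex already in the component has $c_{kl}>0$ while $\Theta(n)$ type-$l$ vertices remain undiscovered, then w.h.p.\ one of those edges would already have appeared. A Chernoff/Azuma estimate on the random walk associated to $A_t$ then shows that, conditioned on surviving past step $\delta N$, the exploration stays alive until the undiscovered count drops below $\delta N$, i.e.\ the component has size at least $N-\delta N>N/2$, with failure probability $e^{-\Omega(n)}$; a union bound over the $N$ starting vertices gives that w.h.p.\ $\G$ has no component of size in $[\delta N,N/2]$. Combining the three parts with the reduction proves the theorem.
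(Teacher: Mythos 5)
Your part (i) is essentially identical to the paper's: the paper also fixes a type $l$ with $\alpha b_l<1$, computes $\E I_n\sim a_l n^{1-\alpha b_l}\to\infty$ for the number $I_n$ of isolated vertices of that type, observes that $\prob(d_x=0,d_y=0)=(1-c_{ll}p)^{-1}\prob(d_x=0)\prob(d_y=0)$, and concludes by Chebyshev.

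Part (ii) you take a genuinely different route. The paper never does a union bound over vertex sets; instead, for each vertex $x$ it shows $\prob(d_x\le K)=o(n^{-1})$, then that $\ge\log n$ vertices lie within distance $2$ of $x$, then runs the lower exploration process with Lemma~\ref{lemma: 2nd easy dev} (a deviation bound tailored to $\lambda(n)\to\infty$) to grow $\C_x$ past $n^{1/2}\log n$ with probability $1-o(n^{-1})$, and finally forces $\C_x$ to meet the giant component; a union bound over the $N$ starting vertices finishes. Your small-range first-moment bound ($1\le s\le\delta N$) is correct and is actually \emph{more} elementary than the paper's treatment of small components; it closely resembles the paper's own Lemma~\ref{lemma: connectivity critical}, which the paper deploys only in the critical regime.

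The gap is in your large range $\delta N\le s\le N/2$. What you give there is a sketch, not a proof, and the hard part is precisely the one you flag yourself: controlling the drift of $\bm A_t$ uniformly over the history of which types have been discovered, with a failure probability small enough to survive a union bound over $N$ starting vertices. The observation ``if a type-$k$ vertex is already in the component and $c_{kl}>0$ with $\Theta(n)$ undiscovered type-$l$ vertices, an edge would already have appeared'' is true in expectation but needs to be turned into a conditional high-probability statement inside the exploration, and you have not done that. As written this step does not stand.

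Ironically, you dismiss the option that would close the gap most cleanly: the union bound over sets $S$ of size $\delta N\le s\le N/2$ is \emph{not} hopeless. Using the connectivity assumption of Remark~\ref{main thm: assumption} (equivalently, irreducibility of $(c_{kl})$), one can show that for every such $S$, with $s_k=|S\cap V^{(k)}|$ and $t_l=a_l n-s_l$,
\[
\sum_{k,l}s_k\,c_{kl}\,t_l\;\ge\;\eta\,n^2
\]
for a constant $\eta=\eta(\delta,\bm a,\bm c,m)>0$: classify each type as ``mostly in $S$'', ``mostly out'', or ``mixed'' (thresholds $\epsilon n$, $a_k n-\epsilon n$); a mixed type with a $G'$-neighbor already contributes $\Omega(n^2)$, and if no type is mixed then the size constraint $\delta N\le s\le N/2$ forces both a mostly-in and a mostly-out type, which by connectivity of $G'$ are joined by a path containing an adjacent in/out pair, again giving $\Omega(n^2)$. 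Then $\prob(\text{no edge from }S) \le e^{-p\cdot\eta n^2}=e^{-\Omega(n\log n)}$, which crushes $\binom{N}{s}\le 2^{N}$. With this, your entire part (ii) becomes a short first-moment argument with no exploration process at all, arguably simpler than the paper's proof. As it stands, though, you have neither this nor a complete exploration argument, so the large-range step is a genuine gap.
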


We also investigate the critical case $p(n) = (\log n + c + o(1))/ bn$.

\begin{theorem}
\label{main thm: critical connectivity}
Let $p(n) = (\log n + c + o(1))/ bn$ and $b$ and $\G$ is defined as in Theorem~\ref{main thm: connectivity 1}. Assume Remark~\ref{main thm: assumption}. Then $\G$ consists of a giant component and some isolated vertices w.h.p. Moreover, the number of isolated vertices in $\G$ converges to a Poisson distribution.
\end{theorem}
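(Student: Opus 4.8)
The plan is to establish two things separately: first, that w.h.p.\ $\G$ has no component of size in the range $[2,\,n-o(n)]$, so that every non-isolated vertex belongs to one single giant component; and second, that the number $X$ of isolated vertices of $\G$ converges in distribution to $\textrm{Poi}(\lambda)$, where $\lambda = e^{-c}\sum_{k\,:\,b_k=b} a_k$.

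For the structural statement I would re-use the machinery behind Theorem~\ref{main thm: connectivity 1}. Since $p(n)=\Theta(\log n/n)\gg 1/n$, the exploration process of Section~\ref{sec: exploration} started at any vertex, bounded below by its associated random walk, is strongly supercritical; the large deviations lemmas then supply a constant $\beta>0$ such that, w.h.p., every component which ever reaches size $\beta\log n$ already has size $n-o(n)$ — this step is insensitive to the precise constant in $p(n)$. It remains to exclude components of size $2\le k\le\beta\log n$, which is a first-moment estimate: bounding $\prob[S\ \text{connected}]$ by a union bound over the $k^{k-2}$ spanning trees of $S$ and using $\sum_{w\notin S}c_{\mathrm{type}(u)\,\mathrm{type}(w)}\ge b_{\mathrm{type}(u)}n-O(k)$ for the absent boundary edges,
\[
\E\big[\#\{\text{components of size }k\}\big]\ \le\ \binom{n}{k}k^{k-2}(c_{\max}p)^{k-1}\exp\!\big(-bpnk\,(1+o(1))\big),\qquad c_{\max}=\max_{k,l}c_{kl}.
\]
Because $bpn=\log n+c+o(1)$ we have $e^{-bpn}=\Theta(1/n)$ — the same power of $n$ as in the regime $\alpha b>1$ of Theorem~\ref{main thm: connectivity 1} — so the right-hand side, summed over $2\le k\le\beta\log n$, is a geometric-type series with ratio $O(\log n/n)$ and total $O(\log n/n)\to 0$. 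Uniqueness of the giant component then follows as in Theorem~\ref{main thm: component}(ii): two disjoint linear-sized components would leave probability $\exp(-\Theta(n\log n))$ of having no connecting edge.

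For the Poisson limit, write $X=\sum_{k=1}^{m}X^{(k)}$, with $X^{(k)}$ the number of isolated type-$k$ vertices. Using $\log(1-c_{kl}p)=-c_{kl}p+O(p^2)$ together with $p\to 0$ and $np^2\to 0$,
\[
\E X^{(k)}\ =\ a_k n\prod_{l=1}^{m}(1-c_{kl}p)^{a_l n-\indc{l=k}}\ =\ a_k\,n^{\,1-b_k/b}\,e^{-c\,b_k/b}\,(1+o(1)),
\]
and the $o(1)$ inside $p(n)$ is harmless since $n^{-b_k o(1)/b}\to1$. With $K=\{k:b_k=b\}$, this tends to $0$ for $k\notin K$ (so $X^{(k)}=0$ w.h.p.\ by Markov's inequality) and to $a_k e^{-c}$ for $k\in K$, whence $\E X\to\lambda=e^{-c}\sum_{k\in K}a_k$. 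To pass to the distributional limit I would use the method of factorial moments: for fixed $r$, $\E[X(X-1)\cdots(X-r+1)]$ is the sum over ordered $r$-tuples of distinct vertices of the probability that all are isolated, and for a tuple of types $(k_1,\dots,k_r)$ that probability is $\prod_{i<j}(1-c_{k_ik_j}p)\cdot\prod_i\prod_l(1-c_{k_il}p)^{\#\{\text{type-}l\ \text{vertices outside the tuple}\}}$; the first factor is $1+o(1)$ because $p\to0$ and $r$ is fixed, and the second is $\prod_i \E X^{(k_i)}/(a_{k_i}n)\cdot(1+o(1))$. Summing, the tuples with all types in $K$ contribute $\big(\sum_{k\in K}a_k e^{-c}\big)^r=\lambda^r$ in the limit and all other tuples contribute $o(1)$; since the Poisson distribution is determined by its moments, $X$ converges in distribution to $\textrm{Poi}(\lambda)$.

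The Poisson computation is routine once the estimates $p\to0$ and $np^2\to0$ are noted; the real work is the structural claim. The main obstacle will be verifying that the exploration-process and large-deviations estimates from the proof of Theorem~\ref{main thm: connectivity 1} transfer to the exact critical density $bpn=\log n+c+o(1)$ and dovetail with the first-moment bound on the $O(\log n)$-sized range — that is, controlling the crossover between the two size regimes rather than either regime on its own.
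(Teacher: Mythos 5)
Your proposal is correct, and the Poisson part (factorial moments, separating types with $b_k=b$ from the rest) is essentially identical to the paper's Theorem~\ref{thm: poisson}. The structural part, however, takes a genuinely different route, and it is worth comparing. You split the range of forbidden component sizes at $\beta\log n$: a first-moment (tree-counting) bound for $2\le k\le\beta\log n$, and the lower exploration process plus large-deviations estimates from the supercritical analysis for $k>\beta\log n$. The paper instead pushes the first-moment bound all the way up to linear size: Lemma~\ref{lemma: connectivity critical} shows directly that w.h.p.\ there is no component of size $r$ for any $2\le r\le \tfrac{b}{3C}n$, with the quadratic correction term $Cr^2\tfrac{\log n+c}{bn}$ controlled by the cutoff $r\le\tfrac{b}{3C}n$ so that it is dominated by $\tfrac13 r\log n$. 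Once that is in hand, only the collision of two \emph{linear-sized} components needs excluding (Lemma~\ref{lemma: changing vertex type 2}), and no exploration-process machinery is invoked at all for Theorem~\ref{main thm: critical connectivity}. The paper's argument is therefore self-contained and shorter; your version reuses more of the supercritical infrastructure but then has to verify that it transfers to $\alpha b\to 1$. On that transfer: be careful not to start the exploration from a single vertex — the degree bound $\prob[d_x\le K]=o(n^{-1})$ used in the proof of Theorem~\ref{main thm: connectivity 1}(ii) genuinely \emph{fails} at the critical density (indeed $\prob[d_x=0]=\Theta(n^{-1})$ for types with $b_k=b$, which is exactly why isolated vertices persist). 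Your phrasing is safe because you only invoke the exploration process after the component has already reached size $\beta\log n$, so the lower exploration starts from a set of order $\log n$ active vertices and Step~1-type estimates apply; but that caveat should be made explicit. Also, strictly speaking the exploration does not show a big component ``already has size $n-o(n)$'' by itself — it shows the active set reaches order $\sqrt n\log n$, and then any two such components collide; the giant-plus-isolated structure follows from uniqueness rather than from each component individually being nearly spanning.
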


Although it is possible that $p_{kl} = 0$ in our model $\G (n, m, \bm{a}, \bm{c}, p(n))$, the probabilities $p_{kl}$ have the same scale $p(n)$. We would like to consider another random graph model where 
\[p_{k'l'} = c_{k'l'} p'(n) \textrm{ for some } k', l', \ p' (n) = o(p(n)),\]
in other words, some vertices are much less likely to be adjacent. We therefore construct a random model by taking the disjoint union of $\G (n, m_i, \bm{a}_i, \bm{c}_i, p(n))$ for $i = 1, \dots , r$. We call each $\G (n, m, \bm{a}_i, \bm{c}_i, p(n))$ as `block $G_i$'. The probability of two vertices from distinct blocks $G_i$ and $G_j$ being adjacent is $q_{(i,k)(j,l)} = d_{(i,k)(j,l)} p'(n)$ where $k, l$ are the vertex types. We summarize as:

\begin{definition}
\label{def: graph model 2}
For each $i = 1, \dots, r$, let $m_i$ be a positive integer, $\bm{a}_i$ a length $m_i$ vector with positive entries, and $\bm{c}$ a $m_i \times m_i$ symmetric matrix with non-negative entries. Write $M_i = \sum_{j=1}^{i} m_i$. Let $\bm{d}$ be a $M_r \times M_r$ symmetric matrix with non-negative entries. \\
Define \[\G (n, r, (m_i), (\bm{a}_i), (\bm{c}_i), \bm{d}, p(n), p'(n))\]
as the random graph formed by taking the disjoint union of 
\[G_i := \G (n, m_i, \bm{a}_i, \bm{c}_i, p(n)),\  i = 1, \dots, r \]
and adding edges $x \in G_i, y \in G_j$ for $i \neq j$ with probability \[q_{(i,k)(j,l)} = d_{(M_{i-1}+k)(M_{j-1}+l)} p'(n),\] independently, where $k, l$ are the vertex types of $x, y$, respectively.
\end{definition}

We again assume Remark~\ref{main thm: assumption} for each $G_i$. We additionally assume the following, an analog to Remark~\ref{main thm: assumption}.

\begin{remark}
\label{main thm: assumption 2}
Consider the graph $G''$ obtained from our original random graph model \[\G (n, r, (m_i), (\bm{a}_i), (\bm{c}_i), \bm{d}, p(n), p'(n))\] by collapsing each block $G_i$ into one vertex, and define two vertices $i, j \in \{1 , \dots , r\}$ to be adjacent if and only if \[d_{(M_{i-1}+k)(M_{j-1}+l)} > 0\] for some type $k$ from block $G_i$ and some type $l$ from block $G_j$. We shall assume that $G''$ is connected.
\end{remark}

From Alon \cite{Alon1995} we may anticipate that $\bm{d}$ has no effect on the threshold value, which is true according to the following theorem.

\begin{theorem}
\label{main thm: connectivity 2}
Assume Remarks~\ref{main thm: assumption} and \ref{main thm: assumption 2}. Let $p(n) = \frac{\alpha \log n}{n}$, $p'(n) = o(p(n))$ and define $b_k ^{(i)}$ as in Theorem~\ref{main thm: connectivity 1} for each block $G_i$. \\
(i) If $\alpha b_k ^{(i)} < 1$ for some $i, k$ or $n^2 p'(n)= o(1)$, then $G$ is not connected w.h.p. \\
(ii) If $\alpha b_k ^{(i)} > 1$ for all $i, k$ and $n^2 p'(n) \to \infty$, then $G$ is connected w.h.p.
\end{theorem}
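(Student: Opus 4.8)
The plan is to reduce everything to the single-block result, Theorem~\ref{main thm: connectivity 1}, together with two elementary facts about the inter-block edges. Assume $r\ge2$ (for $r=1$ there are no inter-block edges and the statement is exactly Theorem~\ref{main thm: connectivity 1}). Two observations are used throughout: \textbf{(a)} the internal edges of the blocks $G_1,\dots,G_r$ (each present with probability $p(n)=\alpha\log n/n$) and the inter-block edges (each present with probability $q_{(i,k)(j,l)}=O(p'(n))$) are independent, since they live on disjoint sets of potential edges; and \textbf{(b)} because $p'(n)=o(p(n))$ we have $np'(n)=o(\log n)$, hence $(1-Dp'(n))^{An}=\exp(-o(\log n))=n^{-o(1)}$ for any fixed constants $A,D>0$; that is, the entire bundle of inter-block edges at a given vertex is present with probability $n^{-o(1)}$ and also absent with probability $n^{-o(1)}$.

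\emph{Part (i).} If $n^2p'(n)=o(1)$, then since there are $O(n^2)$ potential inter-block edges, each present with probability $O(p'(n))$, the expected number of inter-block edges is $O(n^2p'(n))=o(1)$; by Markov's inequality $G$ has no inter-block edge w.h.p., and on that event $G=G_1\sqcup\cdots\sqcup G_r$ is a disjoint union of $r\ge2$ nonempty graphs, hence disconnected. If instead $\alpha b^{(i)}_k<1$ for some block $i$ and type $k$, I would show that w.h.p.\ $G$ has a type-$k$ vertex of $G_i$ isolated in all of $G$. For such a vertex $v$, using (a),
\[
\prob(v\text{ isolated in }G)\;=\;n^{-\alpha b^{(i)}_k+o(1)},
\]
the internal edges at $v$ contributing $\exp(-\alpha b^{(i)}_k\log n+o(1))$ (because $\sum_l a^{(i)}_l n\,c^{(i)}_{kl}\,p(n)=\alpha b^{(i)}_k\log n$, exactly as in the homogeneous case), and the $O(n)$ inter-block edges at $v$ contributing $n^{-o(1)}$ by (b). Thus the number $X$ of type-$k$ vertices of $G_i$ isolated in $G$ satisfies $\E[X]=n^{\,1-\alpha b^{(i)}_k+o(1)}\to\infty$, since $1-\alpha b^{(i)}_k$ is a fixed positive constant. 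A second-moment bound then closes the argument: two distinct candidates $v\ne w$ share only the single potential edge $vw$, so $\prob(v,w\text{ both isolated})=(1+o(1))\,\prob(v\text{ isolated})\,\prob(w\text{ isolated})$, giving $\E[X^2]=(1+o(1))(\E[X])^2$ and $\prob(X=0)\to0$ by Chebyshev. This is simply the proof of Theorem~\ref{main thm: connectivity 1}(i) with a harmless extra factor.

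\emph{Part (ii).} Assume $\alpha b^{(i)}_k>1$ for all $i,k$ and $n^2p'(n)\to\infty$. For each fixed $i$ this gives $\alpha b^{(i)}=\alpha\min_k b^{(i)}_k>1$, so by Theorem~\ref{main thm: connectivity 1}(ii) — applicable since each $G_i$ satisfies Remark~\ref{main thm: assumption} — the block $G_i$ is connected w.h.p.; as $r<\infty$, w.h.p.\ all blocks are connected simultaneously. Next, for every edge $\{i,j\}$ of the collapsed block graph $G''$, Remark~\ref{main thm: assumption 2} supplies types $k$ in $G_i$ and $l$ in $G_j$ with $d_{(M_{i-1}+k)(M_{j-1}+l)}=:d_0>0$; the corresponding $a^{(i)}_k n\cdot a^{(j)}_l n=\Theta(n^2)$ potential edges are present independently with probability $d_0p'(n)$, so the chance that none is present is $\le\exp(-\Theta(n^2p'(n)))\to0$. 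A union bound over the finitely many edges of $G''$ shows that w.h.p.\ every edge of $G''$ is realized by an inter-block edge of $G$. On the intersection of these two w.h.p.\ events, contracting each (now connected) block $V(G_i)$ to a single vertex turns $G$ into a graph on $\{1,\dots,r\}$ containing $G''$; since $G''$ is connected, $G$ is connected.

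\emph{Main difficulty.} The step I expect to need the most care is the isolated-vertex estimate in part (i): $p'(n)$ must simultaneously be \emph{small enough} that, in the regime $\alpha b^{(i)}_k<1$, the roughly $n^{1-\alpha b^{(i)}_k}$ vertices isolated inside $G_i$ are not — with overwhelming probability — all attached to some other block (this is precisely estimate (b), subpolynomial and hence beaten by the polynomial surplus $n^{1-\alpha b^{(i)}_k}$), and, in the complementary regime $n^2p'(n)=o(1)$, small enough that no inter-block edge appears at all. Everything else is a routine assembly of Theorem~\ref{main thm: connectivity 1} with first- and second-moment bounds rendered independent through (a); part (ii) in particular needs no genuinely new input.
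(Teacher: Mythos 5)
Your proposal is correct and follows essentially the same strategy as the paper's (very terse) proof: reduce to Theorem~\ref{main thm: connectivity 1}, observe that $p'(n)=o(p(n))$ means the inter-block edges alter a vertex's isolation probability only by a subpolynomial factor, and show that $n^2 p'(n)\to\infty$ forces the blocks to merge along the edges of $G''$. Your write-up is more explicit than the paper's — you quantify the $n^{-o(1)}$ factor, treat the $n^2p'(n)=o(1)$ subcase directly, and bound the no-inter-block-edge probability by a direct edge count rather than invoking Lemma~\ref{lemma: changing vertex type 2} — but the underlying ideas coincide.
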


Theorem~\ref{main thm: connectivity 2} accounts for networks where some groups are sparsely connected.

% --------------------------------------------------------------------------------------------------------------------------------------------------------------------------

\section{Preliminaries}

\subsection{Notations}
\label{notations}
We introduce some notations that are frequently used throughout this article. We first present standard notations for asymptotics, which are found in \cite[Chapter 1]{Janson2000}. We are interested in the relative order of two sequences $(a_n), (b_n)$ of non-negative real numbers.
\begin{itemize}
	\item $a_n = O(b_n)$ as $n \to \infty$ if $\exists C, n_0$ such that $a_n \leq C b_n$ for $n \geq n_0$.
	% \item $a_n = \Omega(b_n)$ as $n \to \infty$ if $\exists c>0, n_0$ such that $a_n \geq c b_n$ for $n \geq n_0$.
	\item $a_n = \Theta(b_n)$ as $n \to \infty$ if $\exists C>0, c>0, n_0$ such that $cb_n \leq a_n \leq C b_n$ for $n \geq n_0$.
	\item $a_n = o(b_n)$ as $n \to \infty$ if $a_n / b_n \to 0$ as $n \to \infty$.
	\item $a_n \sim b_n$ if $a_n /b_n \to 1$.
	%\item $a_n \ll b_n$ if $a_n = o(b_n)$.
\end{itemize}
We shall omit the phrase ``as $n \to \infty$" if it is clear from context.

Next, we give asymptotic notations involving probability space. Let $X_n$ be a random variable and $(a_n)$ a sequence of non-negative real numbers.
\begin{itemize}
	%\item $X_n = O_p (a_n)$ as $n \to \infty$ if $\forall \delta>0, \exists C_{\delta}, n_0$ such that $\prob(|X_n| \leq C_{\delta}a_n) > 1-\delta$ for $n \geq n_0$.
	\item $X_n = O_C(a_n)$ as $n \to \infty$ if $\exists C>0$ such that $|X_n| \leq Ca_n$ w.h.p.
	%\item $X_n = \Theta_p(a_n)$ as $n \to \infty$ if $\forall \delta>0, \exists c_{\delta}>0, C_{\delta}>0, n_0$ such that $\prob(c_{\delta} a_n \leq X_n \leq C_{\delta}a_n)>1-\delta$ for $n \geq n_0$.
	% \item $X_n = \Theta_C(a_n)$ as $n \to \infty$ if $\exists c>0, C>0$ such that $ca_n \leq X_n \leq Ca_n$ w.h.p.
	%\item $X_n = o_p(a_n)$ as $n \to \infty$ if $\forall \epsilon>0, |X_n|<\epsilon a_n$ w.h.p.
	\item $X_n = \Omega_C (a_n)$ as $n \to \infty$ if $\exists c>0$ such that $X_n \geq c a_n$ w.h.p.
	\item $X_n \sim a_n$ as $n \to \infty$ if $X_n / a_n \to 1$ in probability.
\end{itemize}
% Delete o_p(a_n) : repeating X_n \sim a_n ?
% Remark: X_n \to a in probability iff X_n = a + o_p(1)
As before we may omit ``as $n \to \infty$" when there is no ambiguity. Also, since we are mostly interested in the behavior when $n \to \infty$, we may proceed with inequalities or equations which only hold for sufficiently large $n$ \textit{without} including the phrase ``for sufficiently large $n$".

We denote as Ber($p$) the distribution of a random variable $X$ satisfying \[\prob[X=0]=1-p, \ \prob[X=1]=p,\] i.e. the Bernoulli distribution with parameter $p$. We also denote as Poi($\lambda$) as the Poisson distribution with mean $\lambda$.  And finally Bin$(n, p)$ is the binomial distribution with parameters $n, p$. As an abuse of notation, we may use these distributions to denote random variables following that particular distribution; for example, in equations or inequalities, Bin$(n,p)$ might appear and indicate a random variable following the distribution Bin$(n,p)$. And given a probability distribution $F$ and a random variable $X$, we write $X \sim F$ to indicate that $X$ has as distribution $F$.

In each argument we have a specific random graph model $\G$. We write the probability space for $\G$ as $(\Omega, \F, \prob)$. We shall denote as $G$ a graph chosen randomly from the model $\G$. We sometimes add a subscript to $\prob$ to denote the starting point of a process as in Markov chains. Probabilities involving graph properties will never have subscripts, e.g. $\prob (G$ is connected), but probabilities involving a process (that we will define shortly) with varying initial states will have subscripts when we are calculating under specific initial conditions; they will not have subscripts if the calculations hold for any initial state.

We now state some elementary notations. We shall write $\bm{x}=(x_1, x_2, \dots, x_d)$ to denote a row vector in $\R^d$, and $\bm{y} = (y_1, y_2, \dots, y_d)^{\dagger}$ to denote a column vector in $\R^d$. Note that we use the same notation for row and column vectors when represented by $\bm{x}$, but distinguish them by putting a transpose superscript when they are represented by $n$-tuples. We shall write $\bm{x} \geq \bm{y}$ or $\bm{y} \leq \bm{x}$ if all the components of $\bm{x}$ are greater or equal to the corresponding components of $\bm{y}$. $\bm{e}_k$ shall be the Euclidean vector whose $k$-th component is 1 and others are 0. And for the inner product of $\bm{x}, \bm{y} \in \R^d$ we shall write $\bm{x} \bm{y}^\dagger$.

Recall that for a graph $G=(V,E)$ we wrote $\C_1(G)$ for the largest component of $G$. For $x \in V$ we shall write $\C_x$ to denote the component of $G$ containing $x$.

% --------------------------------------------------------------------------------------------------------------------------------------------------------------------------

\subsection{Multi-type branching processes}bm
\label{sec: mbp}
We state some well-known facts about multi-type Galton-Watson branching processes. Most of the results are from \cite[Chapter 2]{Harris1963}, and we mostly follow their notations.

\begin{definition}
Given an integer $m>0$ and distributions $F_k\ (k=1,\dots,m)$ with state space $\Z_{\geq 0}^m$, a `multi-type branching process' is a vector Markov process $\bm{Z}_0, \bm{Z}_1, \dots$ where each $\bm{Z}_t$ is a length $m$ vector with non-negative integer components. $\bm{Z}_0$ is called the \textit{initial state}, and given $\bm{Z}_t=(x_1, \dots, x_m)$, $\bm{Z}_{t+1}$ is defined by
\[ \bm{Z}_{t+1} = \sum_{k=1}^{m}\sum_{i=1}^{x_k} \bm{\eta}_i ^{(k)}\]
where $\bm{\eta}_i^{(k)}$ are independent random variables with $\bm{\eta}_i^{(k)} \sim F_k.$
\end{definition}
As usual, the particles in $\bm{Z}_{t+1}$ are thought as the `offsprings' of the particles in $\bm{Z}_t$. In this article, the number of types $m$ and each distribution $F_k$ will be clear from the context and we will not write them explicitly. When the initial state is $\bm{Z}_0 = (x_1, \dots, x_m)$, we write subscripts in the probability and expectation to indicate the fixed initial state. For example, we write $\prob_{(x_1, \dots, x_m)}$ or $\prob_A$ where $A$ is a set consisting of $x_k$ elements of type $k$ particles, $k=1,\dots,m$. When $\bm{Z}_0 = \bm{e}_k$ for some $k$, we usually write $\prob_x$ where $x$ is any particle of type $k$, instead of $\prob_{\{x\}}$ or $\prob_{\bm{e}_k}$.
\\

In a 1-type branching process, the expected value of the offspring distribution was the critical value for extinction. In a $m$-type branching process, we use a $m \times m$ matrix $\mathbf{M}$ called the `associated matrix', whose $(k, l)$-th entry is the expected value of the number of type $l$ offsprings from a single particle of type $k$. We write $q_k$ as the extinction probability of the branching process that starts from one individual of type $k$, i.e. $\bm{Z}_0 = \bm{e}_k$. 

\begin{remark}
\label{rmk: mbp}
The Perron-Frobenius theorem asserts that given a square matrix $A$ with non-negative entries, there exists $\mu \geq 0$, called the `Perron root', such that for every other eigenvalue $\lambda$ of $A$ we have $|\lambda|\leq \mu$. It also guarantees the existence of an eigenvector corresponding to $\mu$ with non-negative components. For our associated matrix $\mathbf{M}$ we write the Perron root as $\mu \geq 0$. 
\end{remark}

We shall exclude branching processes where every individual has exactly one offspring. These trivial processes are called `singular', and we only consider non-singular cases. We call $\mathbf{M}$ a `positively regular' matrix if all the entries of $\mathbf{M}$ are finite, and there exists a $n \in \mathbb{N}$ such that all the entries of $\mathbf{M}^n$ are positive. 

\begin{theorem}[Theorem 7.1, \cite{Harris1963}]
\label{thm: mbp}
Consider a non-singular multi-type branching processes such that $\mathbf{M}$ is positively regular. Then the Perron root of $\mathbf{M}$ satisfies $\mu>0$. In the case of $\mu>1,$ we have $q_k <1$ for all $k = 1, \dots, m$.
\end{theorem}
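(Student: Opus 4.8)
The plan is to argue entirely with probability generating functions. Write $\bm{s}^{\bm{a}}$ for the monomial $\prod_{l} s_l^{a_l}$ when $\bm{s}\in[0,1]^m$, set $f^{(k)}(\bm{s}):=\E_{\bm{e}_k}[\bm{s}^{\bm{Z}_1}]$ and $\bm{f}:=(f^{(1)},\dots,f^{(m)})$, so that the $n$-step generating functions are the iterates $\bm{f}_n=\bm{f}\circ\bm{f}_{n-1}$. I will use three standard facts: (a) $\bm{f}$ is monotone (coordinatewise nondecreasing) on $[0,1]^m$, being a vector of power series with nonnegative coefficients; (b) the inward partial derivatives of $f^{(k)}$ at the corner $\bm{1}$ are $\partial_l f^{(k)}(\bm{1})=\mathbf{M}_{kl}$, which are finite because positive regularity requires the entries of $\mathbf{M}$ to be finite; and (c) $\bm{q}=(q_1,\dots,q_m)=\lim_{n\to\infty}\bm{f}_n(\bm{0})$, an \emph{increasing} limit, since $(\bm{f}_n(\bm{0}))_k=\prob_{\bm{e}_k}[\bm{Z}_n=\bm{0}]$ is the probability of extinction by generation $n$. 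In particular $\bm{q}$ is a fixed point of $\bm{f}$, and the theorem reduces to showing (1) $\mu>0$ and (2) $\bm{q}<\bm{1}$ coordinatewise whenever $\mu>1$.

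For (1) I would use positive regularity to obtain $N\in\N$ with every entry of $\mathbf{M}^N$ positive, so the smallest row sum of $\mathbf{M}^N$ is strictly positive; since the smallest row sum of a nonnegative matrix is a lower bound for its Perron root, and the Perron root of $\mathbf{M}^N$ is $\mu^N$, this forces $\mu>0$. (The non-singularity hypothesis is not actually needed for the two displayed assertions: (1) requires only positive regularity, and whenever $\mu>1$ the process is automatically non-singular, because every row of the associated matrix of a singular process sums to $1$ and hence $\mu\le1$.)

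For (2) the idea is to exhibit a \emph{subsolution} of $\bm{f}$ strictly below $\bm{1}$, i.e.\ a vector $\bm{s}^{*}<\bm{1}$ with $\bm{f}(\bm{s}^{*})\le\bm{s}^{*}$. Since positive regularity makes $\mathbf{M}$ irreducible, the Perron--Frobenius theorem supplies a strictly positive right eigenvector $\bm{v}>\bm{0}$ with $\mathbf{M}\bm{v}=\mu\bm{v}$; rescale it so that $\bm{1}-\bm{v}\ge\bm{0}$. For each $k$ the one-dimensional slice $\phi_k(t):=f^{(k)}(\bm{1}-t\bm{v})$, $t\in[0,1]$, is convex (a nonnegative combination of products of nonnegative, nonincreasing, convex functions of $t$), hence has a right derivative at $0$, and by fact (b) that derivative is $\phi_k'(0^+)=-\sum_l v_l\mathbf{M}_{kl}=-\mu v_k$. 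As $\mu>1$ and $v_k>0$ this rate of decrease exceeds $v_k$, so $\phi_k(t)=1-\mu v_k t+o(t)<1-v_k t$ for all sufficiently small $t>0$. Choosing $\varepsilon>0$ small enough to serve the finitely many indices $k=1,\dots,m$ simultaneously and putting $\bm{s}^{*}:=\bm{1}-\varepsilon\bm{v}$ yields the desired subsolution, with $\bm{s}^{*}<\bm{1}$.

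Finally I would propagate the inequality: by monotonicity of $\bm{f}$ and $\bm{f}(\bm{s}^{*})\le\bm{s}^{*}$, the iterates $\bm{f}_n(\bm{s}^{*})$ are nonincreasing and bounded above by $\bm{s}^{*}$; since $\bm{0}\le\bm{s}^{*}$, monotonicity gives $\bm{f}_n(\bm{0})\le\bm{f}_n(\bm{s}^{*})\le\bm{s}^{*}$, and letting $n\to\infty$ yields $\bm{q}\le\bm{s}^{*}<\bm{1}$, i.e.\ $q_k<1$ for all $k$. The step I expect to be most delicate is the behavior of $\bm{f}$ at the corner $\bm{1}$, where the relevant derivatives are one-sided: I deliberately avoid a second-order Taylor expansion (the offspring laws may have infinite variance in full generality) and rely only on the convexity of the slices $\phi_k$ together with the finiteness of the entries of $\mathbf{M}$ that is built into positive regularity. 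In the application made elsewhere in this paper the offspring distributions are products of Poisson laws, so $\bm{f}$ is entire and these boundary issues do not arise.
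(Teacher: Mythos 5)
The paper itself does not prove this statement: it quotes it verbatim from Harris (Theorem~7.1 of Chapter~II of~\cite{Harris1963}) and uses it as a black box, so there is no in-paper argument to compare against. Your proof is a correct, self-contained derivation, and it follows essentially the same classical generating-function route that Harris uses. In particular: the reduction of extinction to the smallest fixed point $\bm{q}=\lim_n\bm{f}_n(\bm{0})$ of the monotone map $\bm{f}$; the identification $\partial_l f^{(k)}(\bm{1})=\mathbf{M}_{kl}$; the Perron--Frobenius eigenvector $\bm{v}>\bm{0}$ giving a direction along which $\bm{f}$ dips strictly below the identity near $\bm{1}$ when $\mu>1$; and the propagation $\bm{f}_n(\bm{0})\le\bm{f}_n(\bm{s}^*)\le\bm{s}^*<\bm{1}$. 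Your convexity argument for the slices $\phi_k(t)=f^{(k)}(\bm{1}-t\bm{v})$ is sound: each monomial $\prod_l(1-tv_l)^{a_l}$ is nonnegative, nonincreasing, and convex on $[0,1]$ once $\bm{v}$ is rescaled so $v_l\le1$, a (countable) nonnegative combination of such terms remains convex, and the one-sided derivative $\phi_k'(0^+)=-\mu v_k$ then follows by monotone convergence using only the finiteness of $\mathbf{M}$ built into positive regularity --- so no second-moment assumption is needed, exactly as you flag. Your remark that non-singularity is superfluous for the two displayed conclusions (since $\mu>0$ needs only positive regularity, and singular processes have row-stochastic $\mathbf{M}$, forcing $\mu=1$) is also correct and a nice observation, even though the theorem statement keeps the hypothesis. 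In short: correct, standard approach, nothing to flag.
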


Recall the multi-type branching process $\bm{Z}^{\G}_t(k)$ defined in Definition~\ref{def: associated bp}. Under the assumption of Remark~\ref{main thm: assumption}, if additionally $c_{kk}>0$ for some $k$, we may apply Theorem~\ref{thm: mbp}, which we summarize as:

\begin{corollary}
\label{cor: mbp}
Suppose that the assumption in Remark~\ref{main thm: assumption} holds and $\exists k$ such that $c_{kk}>0$. Assume that the associated matrix of $\bm{Z}^{\G}_t$, $\lambda \Lambda$, has as Perron root $\mu$ satisfying $\lambda \mu >1$. Then the extinction probabilities $q_k$ satisfy $q_k <1$ for all $k = 1, \dots, m$.
\end{corollary}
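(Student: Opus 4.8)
The plan is to verify the two hypotheses of Theorem~\ref{thm: mbp} for the branching process $\bm{Z}^{\G}_t$ --- that it is non-singular and that its associated matrix $\mathbf{M} = \lambda\Lambda$ is positively regular --- and then read off the conclusion. Non-singularity is immediate: picking $k$ with $c_{kk} > 0$, a type-$k$ particle produces $\textrm{Poi}(\lambda a_k c_{kk})$ offspring of type $k$ alone, and a Poisson variable of strictly positive mean equals $1$ only with probability $< 1$; thus it is not the case that every individual has exactly one offspring, so the process is not singular.

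For positive regularity of $\mathbf{M} = \lambda\Lambda$, all entries $\lambda a_l c_{kl}$ are plainly finite, so it remains to exhibit $n$ with $(\lambda\Lambda)^n = \lambda^n \Lambda^n$ entrywise positive, i.e.\ to show that $\Lambda$ is primitive. I would pass to the directed graph $H$ on $\{1,\dots,m\}$ having an arc $k \to l$ exactly when $\Lambda_{kl} = a_l c_{kl} > 0$, equivalently (since $a_l > 0$) when $c_{kl} > 0$. Because $\bm{c}$ is symmetric, $H$ is the collapsed graph $G'$ of Remark~\ref{main thm: assumption} together with a loop added at each vertex $k$ for which $c_{kk} > 0$. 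Remark~\ref{main thm: assumption} says $G'$ is connected, so $H$ is strongly connected, and the hypothesis that some $c_{kk} > 0$ endows $H$ with a loop, making it aperiodic. A non-negative matrix whose underlying digraph is strongly connected and aperiodic is primitive, so $\Lambda^n$, and hence $(\lambda\Lambda)^n$, is entrywise positive for all large $n$. (Concretely: the $(k,l)$ entry of $\Lambda^n$ is positive iff $H$ has a walk of length $n$ from $k$ to $l$, and such a walk can be built from a fixed walk through $G'$ padded with repetitions of the loop once $n$ is large, uniformly in $k,l$.)

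With both hypotheses in hand, Theorem~\ref{thm: mbp} applies: the Perron root of $\mathbf{M} = \lambda\Lambda$ is strictly positive, and since scaling a non-negative matrix by $\lambda > 0$ scales its Perron root by $\lambda$, that root equals $\lambda\mu$, where $\mu$ is the Perron root of $\Lambda$; as $\lambda\mu > 1$ by hypothesis, the theorem gives $q_k < 1$ for every $k = 1,\dots,m$. I expect the only step needing any thought to be the primitivity argument, namely turning connectedness of $G'$ into strong connectivity and the existence of a diagonal entry $c_{kk} > 0$ into aperiodicity of the digraph underlying $\Lambda$ --- the latter being genuinely needed, since without a loop that digraph could be bipartite and $\Lambda$ merely irreducible rather than primitive. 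Everything else is routine.
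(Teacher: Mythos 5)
Your proposal is correct and follows the same route as the paper: the paper simply invokes Theorem~\ref{thm: mbp} without spelling out the verification of its hypotheses, and the details you supply — non-singularity from the non-degenerate Poisson offspring count, and positive regularity of $\lambda\Lambda$ from the strong connectivity of the digraph underlying $\Lambda$ (which is the symmetric collapsed graph $G'$) combined with the loop at a vertex $k$ with $c_{kk}>0$ breaking any periodicity — are precisely the implicit content the paper leaves to the reader.
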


The extinction probabilities in Corollary~\ref{cor: mbp} later appear in the asymptotic size of the giant component. Although we only showed the conclusion $q_k < 1$ in Corollary~\ref{cor: mbp} for random graph models satisfying $c_{kk}>0\ \exists k$, it will later turn out to be true for models only satisfying Remark~\ref{main thm: assumption}, thus proving the assertion $q_k <1$ in Theorem~\ref{main thm: component}.

% --------------------------------------------------------------------------------------------------------------------------------------------------------------------------

\subsection{The exploration process}
\label{sec: exploration}
We introduce two exploration processes on the vertices of $\G (n, m, \bm{a}, \bm{c}, p(n))$ which inspects one vertex at a time. The processes are defined in \cite{Durrett2006} for the homogeneous case, and we present natural extensions to the inhomoegeneous case. Recall
\begin{eqnarray*}
V^{(1)} = \{ 1, \dots , a_1 n \}, \ V^{(2)} = \{ a_1 n +1, \dots , (a_1 + a_2 )n \}, &&\\
 \dots, \ V^{(m)} = \{ (a_1 + \dots + a_{m-1})n + 1 , &\dots& , (a_1 + \dots + a_m )n \}, 
\end{eqnarray*}
where each $V^{(k)}$ denotes the set of vertices of type $k$. $V = V^{(1)} \cup \dots \cup V^{(m)}$ is the set of all vertices. For $x, y \in V$, $\eta_{x,y}$ denotes the random variable which has value 1 if $x, y$ are adjacent and 0 otherwise; note that $\eta_{x,y} = \eta_{y, x}$ and $\eta_{x, y}$ are independent for $1 \leq x < y \leq |V|$.

In our exploration processes, each vertex will be marked as `removed' or `active' or `unexplored'. At time $t \in \Z_{\geq 0}$, $R_t$ will denote the set of removed vertices, $A_t$ the active, and $U_t$ the unexplored. The starting point $A_0$ will be an arbitrary set of vertices, which may differ according to each argument. We also define \[R_t ^{(k)} = R_t \cap V^{(k)},\ k=1,\dots, m,\] and $A_t ^{(k)}, U_t ^{(k)}$ analogously. Let 
\[ \bm{R}_t = (|R_t ^{(1)}|, \dots,|R_t ^{(m)}|)^\dagger \]
and define \( \bm{A}_t, \bm{U}_t \)  analogously.

\subsubsection{The upper exploration process}
\label{sec: upper exploration}
We first define the `upper' exploration process that is used as an upper bound for the component size. Let \[ \tau = \textrm{inf}\{ t : A_t = \emptyset \}, \] and for $t < \tau$, select a vertex $v_{t+1} = \textrm{min}A_t$ for time $t+1$; put $v_{t+1} = \textrm{min} (A_{\tau} \cup U_{\tau})$ for $t \geq \tau$. The sets $R_{t+1}, A_{t+1}, U_{t+1}$ are defined by the following.

\begin{itemize}
	\item For $t=0$, \( R_0 = \emptyset , \ U_0 = V \setminus A_0 .\)
	\item For $0<t<\tau $, \begin{eqnarray}
		R_{t+1} &=& R_{t} \cup \{ v_{t+1} \}, \nonumber \\
		A_{t+1} &=& (A_{t} \setminus \{ v_{t+1} \})  \cup \{ y \in U_t : y, v_{t+1} \textrm{ are adjacent}\}, \label{eq: exploration} \\
		U_{t+1} &=& U_{t} \setminus \{ y \in U_t :  y, v_{t+1} \textrm{ are adjacent} \}. \nonumber
		\end{eqnarray}
	\item For $t \geq \tau$, \( R_t = R_\tau, \ A_t = A_\tau, \ U_t = U_\tau. \)
\end{itemize}
Note that $V = R_t \cup A_t \cup U_t$ and $R_t, A_t, U_t$ are disjoint for all $t$. 
With this exploration process we define a random walk $\bm{S}_t$ by the following.
\begin{itemize}
	\item For $t=0$, \( \bm{S}_0 = \bm{A}_0.\) 
	\item For $0<t<\tau $, \begin{eqnarray} 
		\bm{S}_{t+1} = \bm{S}_t &-& (1_{v_{t+1} \in V^{(1)}}, \dots, 1_{v_{t+1} \in V^{(m)}})^\dagger 	\nonumber \\
		&+& \Biggl(\ \ \displaystyle \smashoperator{\sum_{\scriptscriptstyle y \in U_t ^{(1)}}} \eta_{y, v_{t+1}} + \smashoperator{\sum_{\scriptscriptstyle y \in V^{(1)} \setminus U_t ^{(1)}}} \zeta_y ^{t,(1)}, \dots, \smashoperator{\sum_{\scriptscriptstyle y \in U_t ^{(m)}}} \eta_{y, v_{t+1}} + \smashoperator{\sum_{\scriptscriptstyle y \in V^{(m)} \setminus U_t ^{(m)}}} \zeta_y ^{t,(m)} \Biggr)^\dagger. \label{def: upper random walk}
		\end{eqnarray}
		
Here  \(\zeta_y ^{t,(l)} \sim \textrm {Ber(}p_{kl})\) for all $y \in V^{(l)} \setminus U_t ^{(l)}$, where $k$ is the vertex type of $v_{t+1}$, hence has the same distribution with $\eta_{y, v_{t+1}}$ for each $y \in U_t ^{(l)}$. Of course all the random variables \[ \eta_{y, v_{t+1}}\textrm{ and } \zeta_y^{t, (l)} \ \  (y \in V,\ l = 1, \dots , m,\  t \geq 1) \] are independent.
	\item For $t \geq \tau$,
	\[\bm{S}_{t+1} = \bm{S}_t - (1_{v_{t+1} \in V^{(1)}}, \dots, 1_{v_{t+1} \in V^{(m)}})^\dagger 	
		+ \Biggl(\displaystyle \smashoperator{\sum_{\scriptscriptstyle y=1}^{\scriptscriptstyle a_1 n}} \zeta_y ^{t,(1)}, \dots,  \smashoperator{\sum_{\scriptscriptstyle y=1}^{\scriptscriptstyle a_m n}} \zeta_y ^{t,(m)} \Biggr)^\dagger,  \]
		where  \(\zeta_y ^{t,(l)} \sim \textrm {Ber(}p_{kl})\) are independent variables for all $y, \ t $.
\end{itemize}

\begin{remark}
Unlike \cite{Durrett2006}, we continue to choose $v_{t+1} = \textrm{min} (A_{\tau} \cup U_{\tau})$ for $t \geq \tau$. This is not a significant change as it does not change the exploration process. We extend the selection of $v_{t+1}$ only to ensure that $S_t$ is well defined for all $t$ with increments of consistent distribution. This later allows us to construct a martingale defined for all times.
\end{remark}

% ----------------------------------------------------------------------------------------------------------------------------------------------------------------------------

\subsubsection{The lower exploration process}
\label{subsec: new exploration}

Our second exploration process works as a lower bound in future arguments. To distinguish from the upper exploration process, we add a superscript $\hat{ }$ to corresponding notions, for example, $\hat{R}_t, \hat{A}_t, \hat{U}_t $ and $\hat{v}_{t+1} $. Let $\delta >0$. We shall inspect one vertex at a time by choosing $\hat{v}_{t+1} = \min \hat{A}_t$ like the upper process.

However, when exploring and activating the vertices in $\hat{U}_t$, we shall only use a portion of $\hat{U}_t$. To be precise, we shall take the smallest $(1- \delta )a_k n$ elements from each $\hat{U}_t ^{(k)}$ to form the set $\hat{U}_{t, \delta} ^{(k)}$ for $k = 1, \dots , m$. In order to make this valid, we will run the process only until $\hat{A}_t \neq \emptyset $ and $| \hat{U}_t ^{(k)} | \geq (1- \delta ) a_k n$ for all $k = 1, \dots , m$. Thus the process runs until the stopping time 
\[
T_W := \textrm{inf} \{ t : \hat{A}_t = \emptyset \textrm{ or } \hat{\bm{A}}_t + \sum_{i=0}^{t-1} \hatindcbmi \ngeq \delta n \bm{a}^\dagger \}.
\] 
Put $\hat{U}_{t, \delta } = \hat{U}_{t, \delta } ^{(1)} \cup \dots \cup \hat{U}_{t, \delta} ^{(m)}$. Now given $\hat{A}_0$, we define the process by the following.

\begin{itemize}
	\item For $t=0$,  \( \hat{R}_0 = \emptyset ,\  \hat{U}_0 = V \setminus \hat{A}_0 .\)
	\item For $0<t<T_W$, \[ \begin{array}{l} 
		\hat{R}_{t+1} = \hat{R}_{t} \cup \{ \hat{v}_{t+1} \}, \\
		\hat{A}_{t+1} = (\hat{A}_{t} \setminus \{ \hat{v}_{t+1} \})  \cup \{ y \in \hat{U}_{t, \delta} : y, \hat{v}_{t+1} \textrm{ are adjacent}\}, \\
		\hat{U}_{t+1} = \hat{U}_{t} \setminus \{ y \in \hat{U}_{t, \delta} : y, \hat{v}_{t+1} \textrm{ are adjacent}\}.
	\end{array} \]
	\item For $t \geq T_W$, \( \hat{R}_t = \hat{R}_{T_W}, \ \hat{A}_t = \hat{A}_{T_W}, \ \hat{U}_t = \hat{U}_{T_W} .\)
\end{itemize}

With this exploration process we define a random walk $\bm{W}_t$ by the following.
\begin{itemize}
	\item For $t=0$, \( \bm{W}_0 = \hat{\bm{A}}_0\). 
	\item For $0<t<T_W $, \begin{equation} 
		\displaystyle \bm{W}_{t+1} = \bm{W}_t - \hatindcbm +\Biggl(\ \smash{\sum_{\scriptscriptstyle y \in \hat{U}_{t,\delta}^{(1)}} \eta_{y, \hat{v}_{t+1}}}, \dots, \sum_{\scriptscriptstyle y \in \hat{U}_{t,\delta} ^{(m)}} \eta_{y, \hat{v}_{t+1}}\Biggr)^\dagger \label{def: lower random walk}
		\end{equation}
	\item For $t \geq T_W$, \begin{equation}
	\label{eq: after T_W}
		\displaystyle \bm{W}_{t+1} = \bm{W}_t - \hatindcbm + \Biggl(\ \smash{\sum_{\scriptscriptstyle y=1}^{\scriptscriptstyle (1-\delta)a_1 n} \zeta_{y}^{t, (1)}}, \dots, \smash{\sum_{\scriptscriptstyle y=1}^{\scriptscriptstyle (1-\delta)a_m n} \zeta_{y}^{t, (m)}}\Biggr)^\dagger,
	\end{equation}
	where $\hat{v}_{t+1} = \min(\hat{A}_\tau \cup \hat{U}_\tau)$. In (\ref{eq: after T_W}), $\zeta_{y}^{t,(l)}$ $\sim$ Ber$(p_{kl})$ for $k$ representing the type of vertex $\hat{v}_{t+1}$. As before all of the random variables \[\eta_{y, \hat{v}_{t+1}} (y \in \hat{U}_{t, \delta},\  t < T_W) \textrm{ and } \zeta_{y}^{t,(l)} (1 \leq y \leq (1-\delta)a_l n, \ l = 1, \dots , m, \ t \geq T_W) \] are independent.
\end{itemize}	

We note that if the upper and lower exploration processes start with the same set of activate vertices, i.e. $A_0 = \hat{A}_0$, then $\hat{v}_{t+1} = v_{t+1}$ and $\hat{\bm{A}}_{t+1} \leq \bm{A}_{t+1}$ for $t < T_W$. Also $T_W \leq \tau$ and
\begin{equation}
\hat{\bm{A}}_t = \bm{W}_t \ \ \ \forall t \leq T_W. 	\label{eq: A=W}
\end{equation}

% -----------------------------------------------------------------------------------------------------------------------------------------------------------------------------

\section{The largest component}

%Need to check whether it is necessary to define the exploration process and the random walk differently. Couldn't we have just extend the process in the first place? OK

%With this definition, we have $\bm{S}_t \geq \bm{A}_t$ for all $t \leq \tau$, where $\bm{x} \geq \bm{y}$ means that each component of $\bm{x}$ is greater or equal to the corresponding component of $\bm{y}$.

We mainly follow the methods given in Sections 2.2 and 2.3 of \cite{Durrett2006}. 

In this section, we shall always assume Remark~\ref{main thm: assumption}. Recall that if this assumption does not hold, we may divide the graph $G'$ into components and divide $V$ into sections that have no effect on each other in the exploring process.

Fix a random graph model $\G (n, m, \bm{a}, \bm{c}, p(n))$. Recall the matrix $\Lambda$ defined in Section~\ref{sec: introduction} and Remark~\ref{rmk: mbp}. Consider the row vector $\bm{u}=(u_1, \dots, u_m)$ whose transpose is the eigenvector corresponding to the Perron root $\mu$, i.e.
\begin{equation}
\label{eigenvector}
\Lambda \bm{u}^\dagger = \mu \bm{u}^\dagger \textrm{ or } \bm{u} \Lambda^\dagger = \mu \bm{u}.
\end{equation}
Remark~\ref{rmk: mbp} allows us to assume $0 \leq u_i \leq 1$ for all $i = 1, \dots , m$. Note that in the homogeneous case, we may consider the eigenvector as $u = 1>0$. The Perron-Frobenius theorem only assures that the components of the eigenvector $\bm{u}$ are non-negative, but to apply the methods in \cite{Durrett2006}, we need that all the components are positive. Fortunately, thanks to our assumption in Remark~\ref{main thm: assumption}, we have the following.
% May we use Perron-Frobenius without any reference?

\begin{proposition}
\label{prop: u>0}
We have $u_k >0$ for all $k=1, \dots , m$.
\end{proposition}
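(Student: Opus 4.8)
The plan is to exploit the Perron--Frobenius structure of $\Lambda$ together with the connectivity assumption of Remark~\ref{main thm: assumption}. First I would record the basic sign information: by Remark~\ref{rmk: mbp} we may choose the Perron eigenvector $\bm{u}^\dagger$ with all entries $u_k \geq 0$, and $\bm{u} \neq \bm{0}$, so the support $S = \{ k : u_k > 0 \}$ is a nonempty subset of $\{1, \dots, m\}$. The goal is to show $S = \{1,\dots,m\}$. I would argue by contradiction, assuming $S$ is a proper nonempty subset.

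The key step is to look at a coordinate $k \in S$ and use the eigenvalue equation $\mu u_k = \sum_{l=1}^m a_l c_{kl} u_l = \sum_{l \in S} a_l c_{kl} u_l$, where the second equality holds because $u_l = 0$ for $l \notin S$. This says that, restricted to rows and columns in $S$, the submatrix $\Lambda_S := (a_l c_{kl})_{k,l \in S}$ has $\mu$ as an eigenvalue with the strictly positive eigenvector $(u_l)_{l \in S}$. More importantly for the contradiction, I would instead read the eigenvalue equation for a coordinate $k \notin S$: then $0 = \mu u_k = \sum_{l=1}^m a_l c_{kl} u_l = \sum_{l \in S} a_l c_{kl} u_l$. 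Since all $a_l > 0$, all $c_{kl} \geq 0$, and $u_l > 0$ for $l \in S$, every term is nonnegative, so each term vanishes, forcing $c_{kl} = 0$ for all $k \notin S$ and all $l \in S$. (Here I should also handle the possibility $\mu = 0$; but by Remark~\ref{main thm: assumption} the collapsed graph $G'$ is connected on $m \geq 1$ vertices, hence $\Lambda$ is a nonzero nonnegative matrix, and more to the point the argument ``$0 = \sum_{l\in S} a_l c_{kl} u_l$'' is available directly whenever $k\notin S$ regardless of the value of $\mu$, since $\mu u_k = 0$ either way.) Thus there is no edge in $G'$ between the vertex classes indexed by $S$ and those indexed by its complement, so $G'$ is disconnected --- contradicting Remark~\ref{main thm: assumption}.

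I expect the main obstacle to be a clean treatment of the degenerate possibilities: making sure the argument does not secretly assume $\mu > 0$ or that $\Lambda$ is irreducible, and making sure the case $S = \{1,\dots,m\}$ is genuinely the only alternative (i.e. that $\bm{u}$ really can be taken nonzero and nonnegative, which is exactly what Remark~\ref{rmk: mbp} provides). An alternative and perhaps slicker route, which I would mention as a remark, is to invoke irreducibility directly: Remark~\ref{main thm: assumption} says the zero/nonzero pattern of $\bm{c}$ (equivalently of $\Lambda$) is that of the adjacency matrix of a connected graph, hence $\Lambda$ is an irreducible nonnegative matrix, and the Perron--Frobenius theorem for irreducible matrices then guarantees the Perron eigenvector is \emph{strictly} positive --- this gives the conclusion immediately. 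Either way the one genuine input is that Remark~\ref{main thm: assumption} upgrades ``nonnegative'' to ``irreducible,'' and the rest is bookkeeping.
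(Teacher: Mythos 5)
Your argument is correct and is essentially the paper's own proof: both look at a row $k$ with $u_k = 0$, use the eigenvalue equation $\sum_l a_l c_{kl} u_l = \mu u_k = 0$ together with nonnegativity of each term to force $c_{kl}=0$ whenever $u_l>0$, and conclude that $G'$ would be disconnected, contradicting Remark~\ref{main thm: assumption}. Your aside about $\mu=0$ and the alternative via irreducibility are both sound, but they do not change the substance of the argument.
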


\begin{proof}
Suppose without loss of generality that $u_1 = \dots = u_l = 0$, $u_{l+1} = \dots = u_m > 0$, where $1 \leq l <m$.
$u_1 = 0$ implies 
\[ u_1 a_1 c_{11} + u_2 a_2 c_{12} + \dots + u_m a_m c_{1m} = 0, \] 
and since every term in the L.H.S. is non-negative, $a_k > 0$ for all $k$, we have
\[ u_1 c_{11} = u_2 c_{12} = \dots = u_m c_{1m} = 0. \]
Proceeding with $u_2 = 0, \dots, u_l = 0$ gives
\begin{eqnarray*}
0 & = &u_1 c_{11} = u_2 c_{21} = \dots = u_m c_{m1} \\
   & = &u_1 c_{12} = u_2 c_{22} = \dots = u_m c_{m2} \\
   & = & \cdots \  = u_1 c_{1l} = u_2 c_{2l} = \dots = u_m c_{ml}.
\end{eqnarray*}
Hence we have $c_{k k'} = 0$ for $k=1, \dots, l,\ k' = l+1, \dots, m$. This implies that the collapsed graph $G'$ may be divided into sections $\{ 1, \dots , l\}$ and $\{ l+1, \dots , m\}$, which is a contradiction to our assumption in Remark~\ref{main thm: assumption}. Hence we must have $u_k > 0$ for all $k$.
\end{proof}

Denote $u = \min\{u_1 , \dots , u_m \}$ and $U=\max\{u_1, \dots, u_m\}$. We also define a filtration $(\mathcal{F}_t)_{t \geq 0}$ of $\sigma$-algebras where each $\mathcal{F}_t$ is the $\sigma$-algebra generated by all sets occuring in the upper exploration process until time $t$. Then we have $\indc{v_{t+1} \in V^{(k)} } \in \F _t$ for all $k=1,\dots , m$. This gives
\[ 
\E [ \bm{S}_{t+1} - \bm{S}_t | \F _t ] = (np\Lambda^\dagger - I )\indcbm 
\]
and
\begin{equation}
\E [\bm{u} (\bm{S}_{t+1} - \bm{S}_t) | \F _t ] = (np\mu -1) \bm{u} \indcbm.	\label{uS increment}
\end{equation}
Now define
\[
X_0 = \bm{u}\bm{S}_0,\ X_t = \bm{u} \bm{S}_t - \sum_{i=0}^{t-1} (np\mu -1) \bm{u} \indcbmi .
\]
Then (\ref{uS increment}) gives \( \E [X_{t+1} - X_t | \F _t ]= 0\), hence $(X_t)_{t \geq 0}$ is a $(\F _t)_{t \geq 0}$-martingale.

Define a stopping time \( T = \mathrm{inf}\{ t: \bm{S}_t = \bm{0} \} \). Since $\bm{S}_t \geq \bm{A}_t$, we have $T \geq \tau$. Hence the random walk $\bm{S}_t$ will give an upper bound of our component size. From now on in this section, we shall use $p(n) = \lambda /n$ where $\lambda>0$ is a constant.

% -----------------------------------------------------------------------------------------------------------------------------------------------------------------------------

\subsection{The subcritical case: $\mu \lambda<1$}

Assume $\mu \lambda< 1$ throughout this subsection. Our goal here is to prove part (i) of Theorem~\ref{main thm: component}. Let the upper exploration process start from a vertex $x \in V^{(k)}$. We write $\prob_x, \E_x$ to indicate that the process starts from $A_0 = \{x\}$. Then the optional stopping theorem gives
\begin{eqnarray*}
u_k = \E_x X_0 &=& \E_x X_{t \wedge T} \\
		&=& \E_x \left[ \bm{u} \bm{S}_{t \wedge T}^\dagger - \sum_{i=0}^{t \wedge T -1} (\mu \lambda-1) \bm{u} \indcbmi \right] \\
		&\geq& (1-\mu \lambda) u \E_x [t \wedge T].
\end{eqnarray*}
Letting $t \to \infty $, we have
\[
\E_x T \leq \frac{u_k}{(1-\mu \lambda) u} < \infty.
\]
In particular, $T < \infty \ \prob_x\textrm{-a.s.}$
By using the moment generating function as in \cite{Durrett2006}, we may prove that the largest component of our graph $G$ has size $O_C(\log n)$. But we first state a simple observation that is needed to incorporate (\ref{eigenvector}) into our calculations. Given a constant $0 < a \leq 1$, we have
\begin{equation}
a(e^\theta -1 ) \geq e^{a \theta} -1, \ \forall \theta \in \R.
\label{prop 3.1}
\end{equation}

\begin{theorem}
\label{thm: subcritical}
Let $\alpha = \mu \lambda - 1 - \log (\mu \lambda)> 0$. For $a > 1 / \alpha u$, we have
\[
\prob \left[ \max_{x \in V} |\mathcal{C}_x | \geq a \log n \right] \to 0
\]
as $n \to \infty$.
% !!!! Needs correction in a.
\end{theorem}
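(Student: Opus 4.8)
The plan is to bound, for each fixed starting vertex $x\in V^{(k)}$, the probability that the upper exploration process started from $A_0=\{x\}$ runs for at least $a\log n$ steps, and then take a union bound over the $|V|=(\sum_k a_k)n$ choices of $x$. Since the exploration started from $\{x\}$ terminates at time $\tau=|\C_x|$ and the associated random walk satisfies $\bm{S}_t\geq\bm{A}_t\geq\bm{0}$ for all $t\leq\tau$, it suffices to control $\prob_x[\tau\geq a\log n]$.

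The key device is an exponential supermartingale built from $\bm{u}\bm{S}_t$. For $\theta\geq 0$, conditioning on $\F_t$ and on the type $k$ of the vertex $v_{t+1}$ chosen at step $t+1$, the increment $\bm{S}_{t+1}-\bm{S}_t$ equals $-\bm{e}_k$ plus a vector whose coordinates are independent with the $l$-th coordinate distributed as $\mathrm{Bin}(a_l n,p_{kl})$ — this is precisely the role of the phantom variables $\zeta^{t,(l)}_y$ in (\ref{def: upper random walk}), and it holds for every $t$, including $t\geq\tau$. Hence
\[
\E\bigl[e^{\theta\,\bm{u}(\bm{S}_{t+1}-\bm{S}_t)}\mid\F_t,\ v_{t+1}\in V^{(k)}\bigr]
= e^{-\theta u_k}\prod_{l=1}^{m}\bigl(1-p_{kl}+p_{kl}e^{\theta u_l}\bigr)^{a_l n}
\leq \exp\!\bigl(u_k[\lambda\mu(e^{\theta}-1)-\theta]\bigr),
\]
where the inequality uses $1-p+pe^{x}\leq e^{p(e^{x}-1)}$, the identity $a_l n\,p_{kl}=\lambda\Lambda_{kl}$, the bound $e^{\theta u_l}-1\leq u_l(e^{\theta}-1)$ from (\ref{prop 3.1}) (valid for $\theta\geq 0$ since $0<u_l\leq 1$ by Proposition~\ref{prop: u>0} and the normalization), and the eigenvector relation (\ref{eigenvector}) in the form $\sum_l\Lambda_{kl}u_l=\mu u_k$. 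Write $\psi(\theta):=\lambda\mu(e^{\theta}-1)-\theta$; since $\psi$ is convex with $\psi'(0)=\lambda\mu-1<0$, it attains its minimum at $\theta^\ast:=-\log(\lambda\mu)>0$ (using $\lambda\mu<1$), with $\psi(\theta^\ast)=-(\lambda\mu-1-\log(\lambda\mu))=-\alpha<0$. As $u_k\geq u$ and $\psi(\theta^\ast)<0$, the conditional expectation above is at most $e^{-u\alpha}=:\rho<1$, uniformly in $k$, so $M_t:=e^{\theta^\ast\bm{u}\bm{S}_t}\rho^{-t}$ is a non-negative $(\F_t)$-supermartingale.

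The remainder is a Chernoff-type estimate. Starting from $x\in V^{(k)}$ we have $M_0=e^{\theta^\ast u_k}$. Fix an integer $N<a\log n$; on the event $\{\tau>N\}$ we have $N<\tau$, hence $\bm{S}_N\geq\bm{A}_N\geq\bm{e}_j$ for some $j$, so $\bm{u}\bm{S}_N\geq u>0$ and $M_N\geq e^{\theta^\ast u}\rho^{-N}\geq\rho^{-N}$. Combining with $\E_x M_N\leq M_0$ gives $\prob_x[\tau>N]\leq e^{\theta^\ast u_k}\rho^{N}$. Choosing $N=\lceil a\log n\rceil-1$, so that $\rho^{N}=\Theta(n^{-au\alpha})$, and bounding $e^{\theta^\ast u_k}\leq e^{\theta^\ast U}$, we obtain $\prob_x[|\C_x|\geq a\log n]=\prob_x[\tau\geq a\log n]\leq C\,n^{-au\alpha}$ with $C$ independent of $x$. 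A union bound over the $(\sum_k a_k)n$ vertices yields $\prob[\max_{x\in V}|\C_x|\geq a\log n]\leq C'\,n^{1-au\alpha}\to 0$, the exponent being negative precisely because $a>1/(\alpha u)$.

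The argument is essentially routine once the supermartingale is in place; the single point requiring care is making the exponential bound \emph{uniform in the type} $k$, which is exactly where we must combine the eigenvector relation (to turn $\sum_l\Lambda_{kl}(e^{\theta u_l}-1)$ into $\mu u_k(e^{\theta}-1)$) with the strict positivity $u_k\geq u>0$ from Proposition~\ref{prop: u>0} (to absorb the leftover factor $u_k$ into the single decay constant $e^{-u\alpha}$). A secondary subtlety, already reflected in the plan above, is that $\bm{S}_t$ need not remain non-negative past time $\tau$, so one should evaluate the supermartingale strictly before $\tau$ rather than at the hitting time $T$ of $\bm{0}$; phrasing everything in terms of $\tau=|\C_x|$ avoids this.
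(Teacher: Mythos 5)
Your argument is correct and follows essentially the same route as the paper's: the identical exponential supermartingale with $\theta_0=-\log(\mu\lambda)$, the same use of (\ref{prop 3.1}) and the eigenvector relation (\ref{eigenvector}) to obtain a bound uniform in vertex type, and the same reliance on $u_k\geq u>0$ from Proposition~\ref{prop: u>0} to absorb the type-dependence into a single decay constant. The only substantive difference is that you evaluate the supermartingale at a deterministic time $N$ restricted to $\{\tau>N\}$ rather than applying optional stopping at $T=\inf\{t:\bm{S}_t=\bm{0}\}$, which sidesteps the paper's preliminary step establishing $\E_x T<\infty$ --- a modest simplification of the same proof.
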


\begin{proof}
Let $\theta \in \R$. Since binomial distributions are the sum of i.i.d. Bernoulli random variables, we have from (\ref{prop 3.1}) and (\ref{eigenvector})
\begin{eqnarray}
\E \left[\exp \left(\theta \bm{u} (\bm{S}_{t+1} - \bm{S}_t)\right) \mid \F_t \right] 
		&=& \sum_{k=1}^{m} 1_{\{v_{t+1} \in V^{(k)}\}} e^{-\theta u_k} \prod_{l=1}^{m}(1-p_{lk}+e^{\theta u_l}p_{lk})^{a_l n}   \nonumber \\
		&\leq& \sum_{k=1}^{m} 1_{\{ v_{t+1} \in V^{(k)} \}} \exp \left(-\theta u_k + \lambda \sum_{l=1}^{m} a_l c_{lk} (e^{\theta u_l}-1)\right) \nonumber \\
		&\leq& \sum_{k=1}^{m} 1_{\{ v_{t+1} \in V^{(k)}\}} \exp \left(-\theta u_k + \lambda  \sum_{l=1}^{m} a_l c_{lk} u_{l} (e^\theta -1) \right) \nonumber \\
		&=&  \sum_{k=1}^{m} 1_{\{ v_{t+1} \in V^{(k)}\}} \exp \left(-\theta u_k + \mu \lambda u_k (e^\theta -1)\right). \label{ineq: moment}
\end{eqnarray}
Now let \[ \varphi _k (\theta) := \exp \bigl(u_k(-\theta + \mu \lambda (e^\theta -1))\bigr) \] and \( C(\theta ) := \max_{1\leq k \leq m}\varphi _k (\theta ) \). Putting \( \theta _0= - \log (\mu \lambda)>0 \) gives \( \varphi _k (\theta _0) = \exp ( -\alpha u_k) \  \forall k\),  hence \[ C(\theta _0) \leq \exp (- \alpha u) <1. \]

Let $C := C (\theta _0)$. Then $M_t := \exp (\theta _0 \bm{u} \bm{S}_t) / C^{t}$ is a $(\F _t)_{t \geq 0}$-supermartingale. Now suppose that we started the exploration process from a vertex $x \in V^{(k)}$. Since we know $T  < \infty \ \prob_x \textrm{-a.s.}$, the optional stopping theorem implies
\[
e^{u_k \theta _0} = \E_x M_0 \geq \E_x M_T = \E_x C^{-T},
\]
hence
\[
\prob_x [T \geq k] \leq C^k \E_x C^{-T} \leq C^k e^{u_k \theta _0} \leq e^{- \alpha u k} e^{\theta_0}.
\]
Now for any $\epsilon >0$,
\begin{equation}
\prob \left[|\mathcal{C} _x| \geq \frac{(1+ \epsilon ) \log n }{ \alpha u }\right] \leq \prob_x \left[T \geq \frac{(1+ \epsilon ) \log n }{ \alpha u} \right] \leq n^{-1-\epsilon} e^{\theta _0}. \label{ineq: component}
\end{equation}
Therefore, for each $a > 1/\alpha u$,  there exists $\epsilon >0$ such that
\[
\prob \left[ \max_{x \in V} |\mathcal{C}_x | \geq a \log n \right] = O(n^{-\epsilon})
\]
and our conclusion follows.
\end{proof}

% ---------------------------------------------------------------------------------------------------------------------------------------------------------------------------

\subsection{The supercritical case: $\mu \lambda> 1$}

The supercritical case is more complicated than the previous subcritical case. In \cite{Durrett2006}, one of the most crucial tools is a large deviations lemma which guarantees that there are enough active vertices w.h.p. However, this lemma is not immediately applicable to the inhomogeneous model. Therefore, we first start with a large deviations lemma that has limited conclusions but still sufficient to apply to our inhomogeneous model. The following lemma is an observation on the maximal ratio of first taking the exponential and then taking a linear combination versus taking a linear combination first and then applying the exponential. 

\begin{lemma}
\label{lemma: ratio of exponential}
Let $a \neq b$, and consider \[f_{a,b}(p):=\frac{p e^a + (1-p) e^b}{e^{pa+(1-p)b}}.\] Then, we have
\[
\max_{0 \leq p \leq 1} f_{a,b}(p) = f_{a,b}(p_0) = \frac{\frac{e^{b-a}-1}{b-a}}{\exp(b-a+1-\frac{b-a}{1-e^{-(b-a)}})}
\]
where $p_0 = \frac{1}{1-e^{-(b-a)}}-\frac{1}{b-a} \in [0,1]$.
\end{lemma}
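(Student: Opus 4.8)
The plan is to treat this as a one-variable calculus problem: fix $a \neq b$, set $d := b - a$ (so $d \neq 0$), and optimize $f_{a,b}(p) = \frac{pe^a + (1-p)e^b}{e^{pa+(1-p)b}}$ over $p \in [0,1]$. First I would simplify by factoring $e^b$ out of the numerator and $e^b$ out of the denominator (writing $pa + (1-p)b = b - pd$), which reduces the problem to maximizing $g(p) := \bigl(pe^{-d} + (1-p)\bigr)e^{pd} = \bigl(1 - p(1 - e^{-d})\bigr)e^{pd}$ over $[0,1]$; note $g$ is positive on $[0,1]$ since $1 - e^{-d} < 1$ when $d > 0$ and the bracket is clearly positive when $d < 0$ as well (one can check $1 - p(1-e^{-d}) \in [e^{-d}, 1]$ or $[1, e^{-d}]$). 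Then I would differentiate: $g'(p) = -(1 - e^{-d})e^{pd} + d\bigl(1 - p(1-e^{-d})\bigr)e^{pd} = e^{pd}\bigl[d - (1-e^{-d})(1 + pd)\bigr]$.

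Setting the bracket to zero gives the unique critical point $p_0$ solving $1 + p_0 d = \frac{d}{1 - e^{-d}}$, i.e. $p_0 = \frac{1}{d}\bigl(\frac{d}{1-e^{-d}} - 1\bigr) = \frac{1}{1 - e^{-d}} - \frac{1}{d}$, which matches the claimed formula with $d = b-a$. Next I would verify $p_0 \in [0,1]$: this amounts to checking $0 \leq \frac{1}{1-e^{-d}} - \frac{1}{d} \leq 1$, equivalently (after clearing the positive denominators, being careful about the sign of $d$) that $\frac{d}{1-e^{-d}} \in [1, 1+d]$ when $d>0$ and the reversed inequalities when $d<0$. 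Both reduce to the elementary convexity fact $e^x \geq 1 + x$ for all real $x$ (apply once with $x = -d$ to get $1 - e^{-d} \leq d$ hence $\frac{d}{1-e^{-d}} \geq 1$ for $d>0$, and once with $x = d$ to get $\frac{d}{1-e^{-d}} = \frac{d e^d}{e^d - 1} \leq 1 + d$ after rearranging $e^d \leq \ldots$); for $d < 0$ the same two applications give the reversed chain. Since $g'$ changes sign from $+$ to $-$ at $p_0$ (the bracket $d - (1-e^{-d})(1+pd)$ is affine in $p$ with slope $-d(1-e^{-d}) < 0$ for $d > 0$ and $> 0$ for $d < 0$, but one must pair this with the sign of the endpoint values — alternatively just note $g$ is strictly concave in the relevant sense or check $g(0)=1=g(1)$ forces an interior max), $p_0$ is the global maximizer on $[0,1]$.

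Finally I would compute $g(p_0)$. Using $1 - p_0(1 - e^{-d}) = 1 - \bigl(\frac{1}{1-e^{-d}} - \frac{1}{d}\bigr)(1-e^{-d}) = \frac{1-e^{-d}}{d} = \frac{e^d - 1}{d e^d}$ (rewriting), and $p_0 d = \frac{d}{1-e^{-d}} - 1$, so $e^{p_0 d} = \exp\bigl(\frac{d}{1-e^{-d}} - 1\bigr)$. Multiplying, $g(p_0) = \frac{e^d-1}{d\,e^d}\exp\bigl(\frac{d}{1-e^{-d}} - 1\bigr) = \frac{(e^d - 1)/d}{\exp\bigl(d + 1 - \frac{d}{1-e^{-d}}\bigr)}$. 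Substituting back $d = b - a$ gives exactly the stated closed form. The routine-but-slightly-fiddly part — the main obstacle — is keeping the algebra of the closed-form evaluation clean and, more importantly, handling the sign of $d$ uniformly in the verification $p_0 \in [0,1]$ and in the second-derivative / sign-change argument; the cleanest route is to observe that $f_{a,b} = f_{b,a}$ (swapping $a \leftrightarrow b$ corresponds to $p \leftrightarrow 1-p$), so we may assume WLOG $d = b - a > 0$ and only prove the inequalities in that case, then note the final formula is symmetric under $d \mapsto -d$ (both $\frac{e^d-1}{d}$ and $d + 1 - \frac{d}{1-e^{-d}}$ are invariant, as a short check confirms), so the result transfers.
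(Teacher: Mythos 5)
The paper states Lemma~\ref{lemma: ratio of exponential} without proof, treating it as a routine calculus fact; your argument supplies that proof and is correct. The reduction to $g(p) = (1 - p(1-e^{-d}))e^{pd}$ with $d = b - a$, the computation of $g'$, the identification of $p_0$, the verification $p_0 \in [0,1]$ via two applications of $e^x \geq 1 + x$, and the algebraic evaluation of $g(p_0)$ all check out, as does the $d \mapsto -d$ invariance you use to justify the reduction to $d > 0$.

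One small slip worth flagging: you assert the slope of the bracket in $g'$, namely $-d(1-e^{-d})$, is positive when $d < 0$. In fact $d$ and $1-e^{-d}$ always share a sign, so $-d(1-e^{-d}) < 0$ for every $d \neq 0$; the bracket is strictly decreasing in $p$ in both cases, $g'$ always changes sign from $+$ to $-$ at $p_0$, and the WLOG you invoke to sidestep the sign question is not actually needed for this step (though it does organize the verification of $p_0 \in [0,1]$ nicely). Your fallback $g(0) = g(1) = 1$ is also a clean, valid way to force the maximum into the interior.
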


We remark that the maximal ratio only depends on the difference $b-a$. Note that the convexity of the exponential function immediately gives $f(p) \geq 1$, but what we need is an upper bound on $f(p)$ to prove the following large deviations lemma.

\begin{lemma}
\label{lemma: large deviation}
Let $\delta >0$ and $Z = X_1 + \dots + X_t$ for
%where all $X_i$'s are independent,
\[
X_i = \sum_{k=1}^{m} 1_{B_{ik}}\bm{u}\bigl(\Bin{a_1 n(1-\delta )}{p_{k1}},\dots, \Bin{a_m n (1-\delta)}{p_{km}}\bigr)^\dagger,
\]
where $\{B_{ik}: k=1, \dots, m\}$ is a partition of the probability space $\Omega$ for each $i = 1, \dots, t$, and all the binomial random variables appearing in the $X_i$'s are independent. Additionally assume that $\sigma(B_{ik}: 1 \leq i \leq t,\ 1 \leq k \leq m)$ and the binomial random variables are independent, and $X_1, \dots, X_t$ are independent as well.

Put $\gamma(x) := x \log x -x +1$ that satisfies $\gamma (x) >0$ for all $x \neq 1$. Let $x<1<y$. Then we have
\[
\prob[Z \leq x \E Z] \leq G(x)^t
\]
and
\[
\prob[Z \geq y \E Z] \leq G (y)^t
\]
for some function $G: \R_{>0} \to \R_{> 0} $ which satisfies $G(x)<1$ for $x<1$ near $1$, and $G(y)<1$ for  $y\gg 1$.
\end{lemma}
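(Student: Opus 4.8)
The plan is to bound each tail via a standard exponential (Chernoff-type) argument, and the role of Lemma~\ref{lemma: ratio of exponential} is precisely to absorb the inhomogeneity (the random choice of which block $B_{ik}$ each $X_i$ falls into, and the resulting mixture of binomial parameters $p_{kl}$) into a single multiplicative constant that does not depend on the partition. First I would compute the moment generating function of a single $X_i$: conditioning on the partition event $B_{ik}$, $X_i$ is a fixed linear combination $\sum_l u_l \Bin{a_l n(1-\delta)}{p_{kl}}$ of independent binomials, so $\E[e^{\theta X_i}\mid B_{ik}] = \prod_{l=1}^m (1 - p_{kl} + e^{\theta u_l} p_{kl})^{a_l n(1-\delta)}$, and hence $\E[e^{\theta X_i}] = \sum_{k} \prob(B_{ik}) \prod_l (\cdots)^{a_l n(1-\delta)}$. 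Using independence of the $X_i$'s, $\E[e^{\theta Z}] = \prod_{i=1}^t \E[e^{\theta X_i}]$.

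Next I would control each factor $\E[e^{\theta X_i}]$ uniformly in the partition. Writing $\mathbf{m} := \E[X_i \mid B_{ik}] = (1-\delta)\,n p \sum_l a_l c_{kl} u_l \cdot$(appropriately), the key point is that we want to compare $\E[e^{\theta X_i}]$ with $e^{\theta \E X_i}$ and show the ratio is bounded by a fixed power of a constant strictly less than $1$ in the relevant tail regime. Here is where Lemma~\ref{lemma: ratio of exponential} enters: each binomial factor $(1 - p_{kl} + e^{\theta u_l} p_{kl})^{a_l n(1-\delta)}$ can be written as a product of $a_l n(1-\delta)$ Bernoulli mgf's, and each Bernoulli mgf $1 - p + e^{\theta u_l} p$ compared to $\exp(\theta u_l p)$ is exactly $f_{0,\theta u_l}(p)$-type ratio up to reparametrization; more directly, after grouping, the ratio of $\E[e^{\theta X_i}]$ to $e^{\theta \E X_i}$ is a convex-combination-versus-exponential ratio of the form handled by $f_{a,b}$, whose maximum over the mixing weights depends only on the spread $b - a$, which is itself bounded in terms of $\theta$, $\max_l u_l$ and the $c_{kl}$'s. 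Thus $\E[e^{\theta X_i}] \le \Phi(\theta)\, e^{\theta \E X_i}$ for an explicit $\Phi(\theta)$ (with $\Phi(0)=1$) not depending on $i$ or the partition. Taking products, $\E[e^{\theta Z}] \le \Phi(\theta)^t e^{\theta \E Z}$.

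Then the lower tail follows by Markov applied to $e^{-\theta Z}$ with $\theta > 0$: $\prob[Z \le x \E Z] \le e^{\theta x \E Z}\E[e^{-\theta Z}] \le \Phi(-\theta)^t e^{-\theta(1-x)\E Z}$, and since $\E Z = t\,\E X_1 \ge t \cdot c_0$ for a constant $c_0>0$ (because $u_k>0$ by Proposition~\ref{prop: u>0} and the $b_k$ are positive under Remark~\ref{main thm: assumption}), the whole thing is $\le G(x)^t$ with $G(x) = \Phi(-\theta)e^{-\theta(1-x)c_0}$; optimizing $\theta$ and using $\Phi(-\theta) = 1 + O(\theta^2)$ while $e^{-\theta(1-x)c_0} = 1 - \theta(1-x)c_0 + O(\theta^2)$ shows $G(x)<1$ for $x<1$ sufficiently close to $1$. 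The upper tail $\prob[Z \ge y\E Z] \le e^{-\theta y \E Z}\E[e^{\theta Z}] \le \Phi(\theta)^t e^{-\theta(y-1)\E Z} =: G(y)^t$ is analogous, with the caveat that $\Phi(\theta)$ grows as $\theta\to\infty$; but since $\E Z = \Theta(t)$ with a positive constant, choosing $\theta$ a suitable constant and taking $y$ large enough makes $e^{-\theta(y-1)c_0}$ beat $\Phi(\theta)$, giving $G(y)<1$ for $y \gg 1$. The function $G$ is then defined piecewise (or as an infimum over $\theta$) on the two regimes, which is all that is claimed.

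The main obstacle is the uniformity over the partition $\{B_{ik}\}$ together with the absence of a lower bound on the binomial parameters individually (some $c_{kl}$ may vanish): one cannot simply apply a scalar Chernoff bound to $Z$ as a sum of i.i.d. variables. The content of Lemma~\ref{lemma: ratio of exponential} is exactly what resolves this — it says the worst-case mgf-ratio over all mixing weights is controlled by the \emph{spread} of the exponents alone, so the constant $\Phi(\theta)$ can be taken independent of which types are actually present and of the (possibly adversarial, $\sigma$-algebra-measurable) choice of blocks. Verifying the $\Phi(0)=1$, $\Phi'(0)=0$ behavior carefully enough to conclude $G<1$ near $1$ on the lower side is the one genuinely delicate estimate; the upper-tail side is softer because there we are allowed to take $y$ as large as we like.
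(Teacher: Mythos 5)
Your overall plan is the right one and matches the paper's: Chernoff bound, compute the mgf of a single $X_i$ conditionally on which block $B_{ik}$ occurs, uniformize over the partition using Lemma~\ref{lemma: ratio of exponential}, and conclude by a Taylor expansion near the boundary. Two points, one substantive and one stylistic, are worth flagging.

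The substantive one is where Lemma~\ref{lemma: ratio of exponential} gets applied. Your first suggestion — writing each binomial factor as $a_l n(1-\delta)$ Bernoulli mgf's and comparing each Bernoulli mgf $1-p+e^{\theta u_l}p$ to $e^{\theta u_l p}$ — does not work if you then invoke the lemma (which maximizes over $p$): you would get a bound of the form $\prod_l F(\theta u_l)^{a_l n(1-\delta)} e^{\theta \E X_i}$ with $F(\theta u_l)>1$, and the prefactor blows up as $n\to\infty$. To get an $n$-free bound you must first Poissonize: apply $1+y\le e^y$ to each binomial mgf and then the eigenvector/convexity inequality (inequality~\eqref{prop 3.1} plus $\Lambda\bm{u}^\dagger=\mu\bm{u}^\dagger$) to arrive at the $n$-free form
\[
\E[e^{\theta X_i}]\ \le\ \sum_{k=1}^m \prob(B_{ik})\, \exp\bigl(U_k (e^\theta-1)\bigr),\qquad U_k:=\mu\lambda(1-\delta)u_k,
\]
which is exactly the paper's~\eqref{ineq: moment 2}. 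Only at this point does Lemma~\ref{lemma: ratio of exponential} enter, and it enters at the \emph{block} level: the ratio $\bigl(\sum_k\nu_k e^{U_k s}\bigr)/e^{s\sum_k\nu_k U_k}$ is a convex-combination-versus-exponential ratio whose maximum over the mixing weights $\nu$ depends only on $A-B$ with $A=\max_k U_k$, $B=\min_k U_k$. Your ``more directly'' sentence is pointing at exactly this, so the correct route is present in your proposal — just make sure the Bernoulli-level reading is discarded.

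The stylistic one: the paper fixes $\theta=\log x$ immediately, writes $x\log x = \gamma(x)+(x-1)$ to split off the factor $\exp(-\gamma(x)\sum_k\nu_kU_k)\le\exp(-\gamma(x)B)$, and is then left with precisely the ratio controlled by Lemma~\ref{lemma: ratio of exponential}; this gives an explicit $G(x)=e^{-\gamma(x)B}F(x)=h(x)/g(x)$ and the conclusion $G(x)<1$ for $x<1$ near $1$ is verified by comparing $h$ and $g$ through the third derivative at $x=1$ (they agree to second order, and $h'''(1)-g'''(1)=3B(A-B)>0$). Your framing — bound $\E e^{\theta X_i}\le\Phi(\theta)e^{\theta\E X_i}$ with $\Phi$ partition-independent, then set $G(x)=\Phi(-\theta)e^{-\theta(1-x)c_0}$ with $c_0=B$ and optimize $\theta$ — is equivalent in spirit; your $\Phi(0)=1$, $\Phi'(0)=0$ argument plays the role of the paper's third-order Taylor comparison. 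One detail to be careful about: $\Phi$ is a supremum over partitions, so ``$\Phi(-\theta)=1+O(\theta^2)$'' should be justified as a uniform second-order Taylor bound with remainder (the second derivatives of the individual $R_\nu(\theta)$ are uniformly bounded because $U_k\in[B,A]$ with $B>0$, which is where Proposition~\ref{prop: u>0} and Remark~\ref{main thm: assumption} are used), rather than by differentiating the supremum. The upper tail $y\gg1$ is, as you say, softer and goes through identically.
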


\begin{proof}
	For $\theta \in \R$, following (\ref{ineq: moment}) and taking expectations leads to
	\begin{equation}
	\E \exp(\theta X_i) \leq \sum_{k=1}^{m} \prob(B_{ik})\exp \left(\mu \lambda (1-\delta)u_k(e^\theta -1)\right)
	\label{ineq: moment 2}
	\end{equation}
	where we used the independence of $\sigma(B_{ik}: 1 \leq i \leq t,\ 1 \leq k \leq m)$ and the binomial random variables.
	Note that \[\E X_i = \mu \lambda (1-\delta) \sum_{k=1}^{m} \prob(B_{ik})u_k.\] For simplicity we shall write $U_k = \mu \lambda (1-\delta)u_k$. 
	
	(i) Let $x<1$ and $\theta <0$. By Markov's inequality, (\ref{ineq: moment 2}) and the independence of $X_i$'s,
	\begin{eqnarray}
	\prob[Z \leq x\E Z] \leq \frac{\E (\exp \theta Z)}{\exp(\theta x \E Z)} 
	&\leq& \frac{\prod_{i=1}^{t} \Bigl( \sum_{k=1}^{m} \prob(B_{ik})\exp(U_k (e^\theta -1))\Bigr) }{\exp(\theta x \E Z)}\nonumber \\
	&=& \prod_{i=1}^{t} \frac{\sum_{k=1}^{m} \prob(B_{ik})\exp(U_k (e^\theta -1))}{\exp\Bigl(\theta x \sum_{k=1}^{m} \prob(B_{ik})U_k\Bigr)}. \label{lemma: 1st markov}
	\end{eqnarray}
	Putting $\theta = \log x$ in (\ref{lemma: 1st markov}) we have
	\begin{equation}
	\label{lemma: 2nd markov}
	\prob[Z \leq x \E Z] = \prod_{i=1}^{t} \exp\Bigl(-\gamma(x) \sum_{k=1}^{m} \prob(B_{ik})U_k\Bigr) \times \prod_{i=1}^{t} \frac{\sum_{k=1}^{m}\prob(B_{ik})\exp(U_k (x-1))}{\exp \Bigl(\sum_{k=1}^{m} \prob(B_{ik})U_k (x-1)\Bigr)}.
	\end{equation}
	Now let $A = \max \{U_k\}$ and $B = \min \{U_k\}$. Since $\sum_{k=1}^{m} \prob(B_{ik}) =1$ for $1 \leq i \leq t$, the numerator in (\ref{lemma: 2nd markov}) is a linear combination of $\exp \left(A(x-1)\right)$ and $\exp \left( B(x-1)\right)$. Since $\gamma (x) >0$ and $x<1$, from (\ref{lemma: 2nd markov}) we have
	\[
	\prob[Z \leq x \E Z] \leq \exp(-t \gamma (x) B) F(x)^t
	\]
	where \[F(x) = \max_{p \in [0,1]} f_{A(x-1), B(x-1)} (p).\]
	
	Putting 
	\[h(x) = e^{-\gamma (x) B} (e^{(A-B)(1-x)}-1),\] \[g(x) = (A-B)(1-x)\exp\Bigl((A-B)(1-x)+1-\frac{(A-B)(1-x)}{1-e^{-(A-B)(1-x)}} \Bigr)
	\]
	and differentiating gives \[h'(1) = g'(1) = -(A-B),\] \[h''(1) = g''(1) = (A-B)^2,\] \[h'''(1) = -(A-B)^3 + 3B(A-B), \ g'''(1) = -(A-B)^3.\]
	This implies $g(x)-h(x)>0$ for $x<1$ near $1$, and thanks to Lemma~\ref{lemma: ratio of exponential} \[G(x) := \exp(-\gamma (x) B)F(x) = \frac{h(x)}{g(x)}<1\] for $x<1$ near 1, and we have
	\[
	\prob[Z \leq x \E Z] \leq G(x)^t
	\]
	as desired.
	
	(ii) For $y>1$, by a similar argument to (i) with $\theta = \log y$ gives
	\[
	\prob[Z \geq y \E Z] \leq \exp(-t \gamma (y) B) F(y)^t ,
	\]
	and
	\[
	\prob[Z \geq y \E Z] \leq G(y)^t.
	\]
	It is readily verified that $G(y) \to 0$ as $y \to \infty$, and in particular we must have $G(y)<1$ for $y \gg 1$.
\end{proof}

As introduced in \cite{Durrett2006}, the main idea to prove the existence of a giant component is to 
\begin{enumerate}
\item show that a component of order $\geq \beta \log n$ must have order $\Omega_C(n^{2/3})$,
\item then show that if two clusters are sufficiently large they must intersect w.h.p. 
\end{enumerate}
For the homogeneous model, the second part is easily done since the probability that two clusters of size at least $n^{2/3}$ do not intersect is \[(1-p)^{n^{4/3}} \sim \exp(-pn^{4/3}) = \exp(-O(n^{-1/3})) = o(1).\] But in our situation we have the possibility that $p_{kl}=0$ for some $k,l$ and that the two big clusters consist only of type $k$ vertices and type $l$ vertices respectively. In order to solve this issue, we observe that for any $l' \in \{ 1, \dots, m\}$ satisfying $c_{kl'}>0$, the first big cluster consisting of type $k$ vertices must be connected to a large number of type $l'$ vertices. 
The next two lemmas are precise statements of this observation.

\begin{lemma}
\label{lemma: changing vertex type}
Let $V, W$ be sets of fixed labelled vertices of size $\beta n^{2/3}$, $\beta' n$, respectively ($\beta, \beta' >0$). Let $p= \frac{c}{n} (c>0)$ be the probability of a vertex in $V$ and a vertex in $W$ being adjacent. We assume that all the edges $\eta_{x,y}$ are independent for all $x \in V,\  y \in W$. Letting \[Y = \# \{ y \in W: \exists x \in V \textrm{ s.t. } x, y \textrm{ are adjacent}\},\] we have $Y \sim \beta \beta' c n^{2/3}$.
\end{lemma}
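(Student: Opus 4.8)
The plan is to observe that $Y$ is a sum of independent indicators and then combine a first‑moment computation with Chebyshev's inequality. For each $y \in W$ set $I_y := 1_{\{\exists x \in V:\ x,y \textrm{ adjacent}\}}$, so that $Y = \sum_{y \in W} I_y$. Since the edges $\eta_{x,y}$ are independent and $I_y$ depends only on the family $\{\eta_{x,y}: x \in V\}$, the variables $(I_y)_{y \in W}$ are mutually independent; moreover each has the same law, namely $\prob[I_y = 1] = 1 - (1-p)^{|V|} = 1 - (1 - c/n)^{\beta n^{2/3}}$. Hence $Y$ has a $\textrm{Bin}\bigl(\beta' n,\ 1 - (1-c/n)^{\beta n^{2/3}}\bigr)$ distribution.

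Next I would estimate the success probability. Taking logarithms, $(1 - c/n)^{\beta n^{2/3}} = \exp\bigl(\beta n^{2/3}\log(1 - c/n)\bigr) = \exp\bigl(-\beta c\, n^{-1/3} + O(n^{-4/3})\bigr) = 1 - \beta c\, n^{-1/3} + o(n^{-1/3})$; a clean two‑sided version follows from $1 - t \le e^{-t}$ and $e^{-t} \le 1 - t + t^2/2$ applied with $t = c/n$. Consequently $\prob[I_y = 1] = \beta c\, n^{-1/3}(1 + o(1))$ and therefore $\E Y = \beta' n \cdot \prob[I_y = 1] = \beta \beta' c\, n^{2/3}(1 + o(1))$, i.e.\ $\E Y \sim \beta\beta' c\, n^{2/3}$.

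Finally, since $Y$ is binomial we have $\textrm{Var}(Y) \le \E Y = \Theta(n^{2/3})$, so Chebyshev's inequality gives, for any fixed $\epsilon > 0$,
\[
\prob\bigl[\,|Y - \E Y| \ge \epsilon\, \E Y\,\bigr] \le \frac{\textrm{Var}(Y)}{\epsilon^2 (\E Y)^2} \le \frac{1}{\epsilon^2\, \E Y} = O(n^{-2/3}) \to 0 .
\]
Combining this with $\E Y \sim \beta \beta' c\, n^{2/3}$ yields $Y / (\beta \beta' c\, n^{2/3}) \to 1$ in probability, which is the claim.

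There is no serious obstacle here; the only points requiring (mild) care are the asymptotic expansion of $(1 - c/n)^{\beta n^{2/3}}$ — where one keeps enough terms to see that the exponent tends to $0$ and to extract the leading order $\beta c\, n^{-1/3}$ — and the harmless convention that $\beta n^{2/3}$ and $\beta' n$ are read as $\lfloor \beta n^{2/3}\rfloor$ and $\lfloor \beta' n\rfloor$, which does not affect the leading asymptotics. If one preferred to avoid Chebyshev, a Chernoff bound for the binomial gives the same (in fact stronger) conclusion, but Chebyshev already suffices since $\E Y \to \infty$.
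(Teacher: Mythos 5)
Your proof is correct and follows essentially the same route as the paper: write $Y$ as a sum of independent indicators, compute $\E Y \sim \beta\beta' c\, n^{2/3}$ from the exact success probability $1-(1-c/n)^{|V|}$, bound the variance, and finish with Chebyshev. The only cosmetic difference is that the paper states the Chebyshev deviation as $(\log n)(\E Y)^{1/2}$ rather than $\epsilon\,\E Y$, but both yield the stated convergence in probability.
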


\begin{proof}
For each $y \in W$, define $Y_y = 1$ if $y$ is connected to $V$ and $Y_y =0$ otherwise. Then
\[
\prob[Y_y = 0] = (1-p)^{|V|} \sim \exp (-\beta c n^{-1/3}).
\]
Note that the $Y_y$'s are independent and $Y = \sum_{y \in W} Y_y$. Hence
\[
\E Y = \beta' n(1-(1-p)^{|V|}) \sim \beta \beta' c n^{2/3}
\]
and
\[
\textrm{var}(Y) = \beta'n(1-(1-p)^{|V|})(1-p)^{|V|} \sim \beta \beta' c n^{2/3}.
\]
Applying Chebyshev gives
\[
\prob\left[|Y - \E Y | \geq (\log n)(\E Y)^{1/2}\right] = o(1).
\]
\end{proof}

By using the same methods as above, we also have the following.

\begin{lemma}
\label{lemma: changing vertex type 2}
Let $V, W$ be sets of fixed labelled vertices of size $\beta n$, $\beta' n$, respectively ($\beta, \beta' >0$). Let $p= \frac{c}{n} (c>0)$ be the probability of a vertex in $V$ and a vertex in $W$ being adjacent. We assume that all the edges $\eta_{x,y}$ are independent for all $x \in V,\  y \in W$. Letting \[Y = \# \{ y \in W: \exists x \in V \textrm{ s.t. } x, y \textrm{ are adjacent}\},\] we have $Y \sim \beta' n (1-e^{-\beta c})$.
\end{lemma}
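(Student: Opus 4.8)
The plan is to repeat almost verbatim the second-moment argument used for Lemma~\ref{lemma: changing vertex type}, the only genuine difference being that here $|V| = \beta n$ is linear in $n$, so that $(1-p)^{|V|} = (1 - c/n)^{\beta n} \to e^{-\beta c}$ tends to a constant strictly between $0$ and $1$ rather than to $1$; consequently the fraction of vertices of $W$ reached by $V$ is the genuine constant $1 - e^{-\beta c}$.

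First I would, for each $y \in W$, introduce the indicator $Y_y$ which is $1$ if $y$ is adjacent to some vertex of $V$ and $0$ otherwise, so that $Y = \sum_{y \in W} Y_y$. Since the event $\{Y_y = 0\}$ depends only on the edges joining $y$ to $V$, and these edge sets are disjoint for distinct $y$, the variables $(Y_y)_{y \in W}$ are independent; moreover $\prob[Y_y = 0] = (1-p)^{|V|} \sim e^{-\beta c}$, hence $\prob[Y_y = 1] \sim 1 - e^{-\beta c}$.

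Next I would compute the first two moments. By linearity, $\E Y = \beta' n \bigl(1 - (1-p)^{|V|}\bigr) \sim \beta' n (1 - e^{-\beta c})$, and by independence $\mathrm{var}(Y) = \beta' n (1-(1-p)^{|V|})(1-p)^{|V|} \sim \beta' n (1 - e^{-\beta c}) e^{-\beta c} = \Theta(n)$. Chebyshev's inequality then yields
\[
\prob\bigl[|Y - \E Y| \geq (\log n)(\E Y)^{1/2}\bigr] \leq \frac{\mathrm{var}(Y)}{(\log n)^2 \E Y} = O\bigl((\log n)^{-2}\bigr) = o(1),
\]
and since $(\log n)(\E Y)^{1/2} = o(\E Y)$ (as $\E Y = \Theta(n)$), this gives $Y / \E Y \to 1$ in probability; combined with $\E Y \sim \beta' n (1 - e^{-\beta c})$ we conclude $Y \sim \beta' n (1 - e^{-\beta c})$.

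There is essentially no obstacle here: the argument is a routine application of the second moment method, identical in spirit to Lemma~\ref{lemma: changing vertex type}. The only points worth a word are that $\beta n$ and $\beta' n$ should be read as $\lfloor \beta n \rfloor$ and $\lfloor \beta' n \rfloor$, which does not affect any of the asymptotics, and that --- unlike in Lemma~\ref{lemma: changing vertex type} --- the limiting hit-probability $1 - e^{-\beta c}$ is a nontrivial constant, but the variance is still linear in $n$, so the concentration window $(\log n)(\E Y)^{1/2}$ is still $o(\E Y)$.
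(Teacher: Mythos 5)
Your argument is correct and is precisely what the paper intends: the paper itself gives no separate proof of this lemma, merely noting that it follows ``by using the same methods as above'' (i.e.\ the second-moment/Chebyshev argument of Lemma~\ref{lemma: changing vertex type}), which is exactly what you carry out, with the only change being $(1-p)^{|V|} \to e^{-\beta c}$ instead of $\to 1$.
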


% Longest MAIN THEOREM ---------------------------------------------------------------------------------------------------------------------------------------
We finally prove part (ii) of Theorem~\ref{main thm: component}. 
We follow the steps in \cite{Durrett2006}, but to present a more rigorous and self-contained explanation, we prove the theorem in 5 steps.
	\begin{step}
	Fix a $\delta >0$ such that  $\mu \lambda (1-\delta) >1$. There exists some $\gamma >0$ such that the lower exploration process from Section~\ref{subsec: new exploration} starting with at least $\gamma \log n$ active vertices (i.e. $|\hat{A}_0 | \geq \gamma \log n$) never dies out w.h.p. More precisely, \[\prob \left[\bm{W}_t = \bm{0} \textrm{ for some } t\right] \leq n^{-2}\] for all processes starting with $|\hat{A}_0 | \geq \gamma \log n$.
	\end{step}
	
	\begin{proof}
		We use a similar approach used in the proof of Theorem~\ref{thm: subcritical}. Let $\theta \in \R$ and follow (\ref{ineq: moment}) to obtain
		\[ \E \left[\exp \left( \theta \bm{u} (\bm{W}_{t+1}-\bm{W}_{t})\right) | \F _t \right] \leq  \sum_{k=1}^{m}1_{\hat{v}_{t+1} \in V^{(k)}} \exp \left(-\theta u_k + \mu \lambda (1-\delta)(e^\theta-1)u_k \right). \]
		Now let 
		\[ \varphi _k (\theta ) := \exp \Bigl(u_k(-\theta + \mu \lambda (1-\delta) (e^\theta -1))\Bigr) \] 
		and \( C(\theta ) := \max_{1 \leq k \leq m} \varphi_k (\theta )\). Since 
		\[\varphi_k '(\theta) = (-u_k + u_k \mu \lambda (1-\delta)e^\theta )\varphi_k (\theta),\] 
		we have $\varphi_k ' (0) = u_k (-1 + \mu \lambda (1-\delta)) >0$ for $k=1,\dots, m$. Hence there exists $\theta _{\delta} >0$ such that $C(-\theta_{\delta}) \leq 1$. Then $M_t := \exp(-\theta_{\delta} \bm{u}\bm{W}_t^\dagger)$ is a supermartingale w.r.t. $(\F_t)_{t \geq 0}$. 
		We now define $T_0 := \textrm{inf}\{t: \bm{W}_t = \bm{0} \}$ and fix $x \in V^{(k)}$. By the optional stopping time theorem,
		\begin{equation}
			e^{-\theta_{\delta} u_k} = \E_x M_0 \geq \E_x M_{t \wedge T_0}
			\geq \E_x \left[ \exp(-\theta_{\delta} \bm{u}\bm{W}_{T_0}^\dagger) ; T_0 \leq t \right]
			=\prob_x[T_0 \leq t]	\label{ineq: step 1}
		\end{equation}
		where as usual the subscript $x$ in $\E_x$ and $\prob_x$ means that the lower exploration process starts with $x$. In the followings, $\E_l \ (l \in \N)$ and $\prob_l$ shall denote the expected value and probability for the exploration process starting with $l$ vertices, where any type combination is possible. This is an abuse of notation; to be precise the initial $\hat{A}_0$ is fixed for each probability space and $\E_l$ may have different values according to the type combination of the initial state. However, the inequalities below are true regardless of the type combination.
		
		By following the same argument as in (\ref{ineq: step 1}) and letting $t \to \infty$, we have
		\[
		\prob_l [T_0 < \infty ] \leq e^{-\theta_{\delta}ul}.
		\]
		Now setting $\gamma = 2 / u \theta_{\delta} >0$ gives
		\[
		\prob_{\gamma \log n} [T_0 < \infty] \leq n^{-2}
		\]
		as desired.
	\end{proof}
	
	\begin{step}
	There exists $\beta > 0$ such that $\prob_A (0<|A_{\beta \log n}| < \gamma \log n) = o(1)$ for all $A \subset V$ with $|A| = o(n /\log n)$.
	\end{step}
	
	\begin{proof}
	Let \[S_t := \bm{u}\bm{S}_t^\dagger,\ I_t := \bm{u} \sum_{i=0}^{t-1} \indcbmi.\] Then we may apply Lemma~\ref{lemma: large deviation} with $\delta = 0$ and $B_{ik} = \{ v_{i} \in V^{(k)}\}$ for $Z= S_t - S_0 + I_t $. Note that \[ \E Z = \mu \lambda \sum_{i=1}^{t} \sum_{k=1}^{m} u_k \prob [v_{i} \in V^{(k)}], \ \E I_t = \sum_{i=1}^{t} \sum_{k=1}^{m}u_k \prob[v_{i} \in V^{(k)}],\] hence $\E Z = \mu \lambda \E I_t$. Take an appropriate $\epsilon >0$  such that $G(x) <1$ for $x = \frac{1}{\mu \lambda}(\frac{2\epsilon}{u}+1)<1$, where $G$ is from Lemma~\ref{lemma: large deviation}. 
	
	We want to show that $\prob[S_t - S_0 \leq \epsilon t]$ is small by using Lemma~\ref{lemma: large deviation} on $Z$. Now, in the homogeneous case, we have $I_t = t$ deterministically and $\E Z = \mu \lambda t = \mu \lambda I_t$. This allows us to immediately apply the large deviations lemma to $Z$ since \[ \prob[S_t - S_0 \leq \epsilon t] = \prob[Z \leq I_t + \epsilon t] = \prob[Z \leq (1+\epsilon)t].\] However, in our inhomogeneous case, $I_t$ is a genuine random variable and we obviously do not have $\E Z = \mu \lambda I_t$, which is desirable to apply Lemma~\ref{lemma: large deviation}. Therefore we need to apply an additional deviations bound for $I_t$. For that reason we fix a $\delta'>0$ such that $2\delta'<1$. Then we have
	\begin{eqnarray}
	\lefteqn{\prob[S_t - S_0 \leq \epsilon t]} \nonumber \\
	 &=& \prob\left[S_t-S_0 \leq \epsilon t, \ I_t \geq t^{1/2 + \delta'}+\E I_t\right] + \prob\left[S_t-S_0 \leq \epsilon t,\  I_t <  t^{1/2 + \delta'}+\E I_t\right] \nonumber \\
	&\leq& \prob\left[I_t \geq t^{1/2 + \delta'}+\E I_t\right] + \prob\left[Z \leq \epsilon t + t^{1/2 + \delta'} + \E I_t\right]	\nonumber \\
	&\leq& \exp(-2t^{2\delta'} / U^2) + \prob\left[Z \leq \epsilon t + t^{1/2 + \delta'} + \E Z / \mu \lambda\right] \label{ineq: 1st dev}
	\end{eqnarray}
	where the last inequality follows from Hoeffding's inequality. By observing that $ \E Z \geq \mu \lambda u t$ and applying Lemma~\ref{lemma: large deviation}, the last term in (\ref{ineq: 1st dev}) can be estimated by
	\begin{eqnarray}
	\prob\left[Z \leq \epsilon t + t^{1/2 + \delta'} + \E Z / \mu \lambda \right]
	&=& \prob \left[Z \leq \E Z(\frac{\epsilon t + t^{2\delta'}}{\E Z} + \frac{1}{\mu \lambda})\right]	\nonumber \\
	&\leq& \prob \left[Z \leq \E Z\left(\frac{\epsilon t + t^{2\delta'}}{\mu \lambda u t} + \frac{1}{\mu \lambda}\right)\right] \nonumber \\
	&\leq& \prob \left[Z \leq \E Z\left(\frac{2\epsilon t}{\mu \lambda u t} + \frac{1}{\mu \lambda}\right)\right] \nonumber \\
	&=& \prob\left[Z \leq  \frac{1}{\mu \lambda }\left(\frac{2\epsilon}{u}+1\right) \E Z\right] \to 0 	\label{ineq: 2nd dev}
	\end{eqnarray}
	as $t \to \infty$. Combining (\ref{ineq: 1st dev}) and (\ref{ineq: 2nd dev}), we obtain
	\begin{equation}
	\label{ineq: 3rd dev}
	\prob[S_t - S_0 \leq \epsilon t] \to 0 \textrm{ as } t \to \infty.
	\end{equation}
	
	Fix a $y>1$ such that $G(y)<1$. Again by Lemma~\ref{lemma: large deviation},
	\begin{equation}
	\label{ineq: 4th dev}
	\prob\left[S_t -S_0 + I_t \geq y \mu \lambda U t\right] \leq \prob\left[Z \geq y \E Z\right] \to 0 \textrm{ as } t \to \infty. 
	\end{equation}
	
	We note that $S_t - S_0+I_t<y\mu \lambda Ut$ corresponds to 
	\[\bm{u}\bm{U}_t^\dagger > n\bm{u}\bm{a}^\dagger - y \mu \lambda U t - S_0\] 
	as $\bm{U}_t + \bm{W}_t + \bm{I}_t = n \bm{a}$ for all $t$. Since  $\bm{U}_s \geq \bm{U}_{s+1}$ for all $s$,
	\begin{eqnarray}
	\lefteqn{\prob_A \left[S_t-S_0+I_t < y \mu \lambda U t,\  \bm{S}_t \neq \bm{A}_t \right]} && \nonumber \\
	 &\leq& u^{-1}\E_A \left[\sum_{s=1}^{t} \sum_{k=1}^{m} 1_{v_s \in V^{(k)}} \bm{u} \left(\Bin{a_l n - |U_s ^{(l)}|}{p_{lk}} \right)_{1 \leq l \leq m}^\dagger ; S_t-S_0+I_t < y \mu \lambda U t \right] \nonumber \\
	 &\leq& u^{-1} \sum_{s=1}^{t} \sum_{k=1}^{m} \sum_{l=1}^{m} u_l (a_l n - |U_s^{(l})|)p_{lk} \textrm{ with } \bm{u}\bm{U}_t^\dagger > n\bm{u}\bm{a}^\dagger - y \mu \lambda U t - S_0 \nonumber \\
	 &\leq&  C u^{-1} \sum_{s=1}^{t} \sum_{k=1}^{m} (y \mu \lambda U t + S_0)  \nonumber \\
	 &\leq& \frac{C  m t }{nu}(y \mu \lambda U t + |A|) \label{ineq: 5th dev}
	\end{eqnarray}
	for $C= \max \{c_{kl} : 1 \leq k, l \leq m \}$.
	Now choose $\beta > 0$ such that $\epsilon \beta > \gamma$ where $\gamma$ is from \textbf{Step 1}.
	Put $r = \beta \log n$. Combining (\ref{ineq: 3rd dev}), (\ref{ineq: 4th dev}), and (\ref{ineq: 5th dev}), we have
	\begin{eqnarray}
	\prob_A [0<|A_r | \leq \epsilon r] &\leq& \prob_A [S_r-S_0+I_r \geq y \mu \lambda U r]+ \prob_A [S_r -S_0+I_r < y \mu \lambda U r, \ \bm{S}_r \neq \bm{A}_r] \nonumber \\
	&& \mbox{ } + \prob_A [0<|A_r | \leq \epsilon r, \ \bm{S}_r = \bm{A}_r] \nonumber \\
	&\leq& o(1) + \prob_A [S_r -S_0 \leq \epsilon r]=o(1). \nonumber %\label{ineq: many active sites}
	\end{eqnarray}
	\end{proof}
% \textbf{Step 2} implies that if $|A_r |>0$, we have $|A_r | > \epsilon \beta \log n$ w.h.p.. This along with \textbf{Step 1} gives that if $|A_r | >0$ the upper bound random walk $S_t$ (or $\bm{S}_t$) will never hit $0$ (or $\bm{0}$) w.h.p..
		
	\begin{step}
	All clusters reaching size $\beta \log n$ intersect w.h.p.
	\end{step}
	
	\begin{proof}
	We define $W_t := \bm{u} \bm{W}_t^\dagger$. 
	By similar calculations to (\ref{ineq: 3rd dev}), (\ref{ineq: 4th dev}), we have for some $\epsilon_{\delta}>0, y>1$,
	\begin{equation}
	\label{ineq: 1st dev lb}
	\prob[W_t - W_0 \leq \epsilon_{\delta} t ] \to 0,
	\end{equation}
	\begin{equation}
	\label{ineq: 2nd dev lb}
	\prob[W_t - W_0 + I_t \geq y \mu \lambda (1-\delta) U t] \to 0
	\end{equation}
	as $t \to \infty$.
	Recall from \textbf{Step 2} that we put $r = \beta \log n$. Letting $a = a_1 \wedge \dots \wedge a_m$, (\ref{ineq: 2nd dev lb}) implies
	\[
	\prob\left[W_{n^{2/3}}-W_0+I_{n^{2/3}} \geq \delta au n - r \right]\to 0
	\]
	as $n \to \infty$, and since $W_s -W_0 +I_s$ is non-decreasing with respect to $s \geq 0$, we have w.h.p.
	\begin{equation}
	\label{ineq: coupling}
	W_s - W_0 + I_s \leq \delta au n - r,\ \forall s \leq n^{2/3}.
	\end{equation}

	For $x \in V$, suppose that $\C_x$ has size $\geq r.$ Executing the upper exploration process with  $A_0 = \{ x \}$, we have $|A_r | > 0$. Then \textbf{Step 2} implies that $|A_r | > \epsilon r$ w.h.p. After these $r$ steps of exploration we forget how the vertices were marked, i.e. whether they were active, removed, or unexplored, but instead carry out the lower exploration process starting from $\hat{A}_0 \subset A_r$ with $|\hat{A}_0|=r$. Note that (\ref{ineq: coupling}) along with $W_0 \leq r$ implies that we have at least $|V| - \delta a n$ unexplored vertices. Hence by \textbf{Step 1} and (\ref{ineq: coupling}), we have $T_W > n^{2/3}$ and $\bm{W}_s = \hat{\bm{A}}_s \ \forall s \leq n^{2/3}$ w.h.p.
	
	In other words, the coupling between $\bm{W}_s$ and $\hat{\bm{A}}_s$ remains valid for $s \leq n^{2/3}$ w.h.p.. Combining with (\ref{ineq: 1st dev lb}), if a cluster reaches size $\geq r$, then the set of active sites in the lower exploration process has size $> \epsilon_{\delta} n^{2/3}$ w.h.p.
	
	Now suppose that there are two components $\C_1, \C_2$ of size $\geq r$.  Then w.h.p. each has at least size $\epsilon_{\delta} n^{2/3}$, and there exists $k, l \in \{1, \dots, m\}$ such that $\C_1$ has at least $(\epsilon_{\delta}/m) n^{2/3}$ vertices of type $k$ and $\C_2$ has at least $(\epsilon_{\delta}/m) n^{2/3}$ vertices of type $l$. Recalling Remark~\ref{main thm: assumption} and applying Lemma~\ref{lemma: changing vertex type} several times we have w.h.p. at least $d n^{2/3}$ vertices of type $l'$ in $\C_1$, with $c_{ll'}>0$ for some constant $d>0$.
	Hence probability of two clusters of size $\geq r$ being disjoint is $\leq (1-\frac{\lambda c_{ll'}}{n})^{d' n^{4/3}} \leq \exp(-O(n^{-1/3})) = o(1). $
	\end{proof}
	
	\begin{step}
	$\prob[|\mathcal{C}_x| \leq \beta \log n] \to q_k$ for fixed $x \in V^{(k)}$, where $q_k$ is the extinction probability from Definition~\ref{def: associated bp}.
	\end{step}
	
	\begin{proof}
	Let \[T_0 ^{(n)} := \textrm{inf}\{ t: \bm{S}^{(n)} _t= \bm{0} \},\ r_n = \beta \log n.\] We add a superscript $n$ merely to avoid confusion. By the natural correspondence between branching processes and random walks (see e.g. \cite{Roch2024}), $T_0 ^{(n)}$ is the total progeny of a branching process $\bm{Y}^{(n)} _t$ with $m$ types of individuals, each type $k$ having $\Bin{a_1 n}{p_{k1}} \otimes \dots \otimes \Bin{a_m n}{p_{km}}$ as offspring distribution. 
	
	Note that there exists a probability space such that 
	\begin{equation}
	\prob\left[\Bin{l}{c/l} \neq \textrm{Poi}(c) \right] \leq 2 c^2 / l,	\label{binomial and poisson}
	\end{equation}
	a result from general probability theory. We write $\bm{Z} = \bm{Z}^{\G}$ for the branching process defined in Definition~\ref{def: associated bp}. Assume that $\bm{Z}_0 = \bm{Y}^{(n)}_0$ and all the Poisson distributions occuring in $\bm{Z} _t$ satisfy (\ref{binomial and poisson}) with respect to the binomial distributions occuring in $\bm{Y}^{(n)}_t$.
	Then, when conditioned on $T_0 ^{(n)} \leq r_n$,
	\[
	\prob\left[\bm{Y}_{t+1} ^{(n)} = \bm{Z} _{t+1}\right] \geq \prob\left[\bm{Y}_{t+1} ^{(n)}= \bm{Z}_{t+1} | \bm{Y}_t ^{(n)} = \bm{Z}_t\right] \prob\left[\bm{Y}_t ^{(n)}= \bm{Z}_t\right]
	\geq \left(1- C \frac{\log n}{n} \right) \prob\left[\bm{Y}_t ^{(n)}= \bm{Z}_t\right],
	\]
	and we have
	\begin{equation}
	\prob\left[\bm{Y}_{t+1} ^{(n)}= \bm{Z}_{t+1}\right] \geq \left(1- C \frac{\log n}{n} \right)^{t+1} \geq 1 - C(t+1)\frac{\log n}{n}
	\label{ineq: poi and bin}
	\end{equation}
	for some constant $C>0$. Let $T_0$ be the total progeny of $\bm{Z}_t$. Thanks to (\ref{ineq: poi and bin}),
	\begin{eqnarray}
	\prob\left[T_0 ^{(n)} \leq r_n \right] &=& \prob\left[T_0 ^{(n)} \leq r_n, \ \bm{Y}_t ^{(n)}= \bm{Z}_t \ \forall t \leq r_n \right] \nonumber \\
	&& \mbox{ } + \prob\left[T_0 ^{(n)} \leq r_n, \ \bm{Y}_t ^{(n)} \neq \bm{Z}_t \ \exists t \leq r_n \right] \nonumber \\
	&\leq& \prob\left[T_0 \leq r_n\right] + \prob\left[ \bm{Y}_t ^{(n)}\neq \bm{Z}_t \ \exists t \leq r_n\  \big| \ T_0 ^{(n)} \leq r_n\right] \nonumber \\
	&\leq& \prob\left[T_0 \leq r_n\right] + \sum_{t=1}^{r_n} \prob\left[\bm{Y}_t ^{(n)} \neq \bm{Z}_t \ \big| \ T_0 ^{(n)} \leq r_n\right] \nonumber \\
	&\leq& \prob\left[T_0 \leq r_n\right] + C \frac{\log n}{n} \sum_{t=1}^{r_n} t
	= \prob\left[T_0 \leq r_n\right] + o(1). \label{ineq: progeny 1}
	\end{eqnarray}
	Similarly, we have 
	\begin{equation}
	\label{ineq: progeny 2}
	\prob\left[T_0 \leq r_n\right] \leq \prob\left[T_0 ^{(n)} \leq r_n\right]+ o(1).
	\end{equation}
	Clearly $\prob\left[T_0 \leq r_n\right] \to \prob\left[T_0 < \infty\right]$, and $\prob[T_0 < \infty]$ is exactly the extinction probability of $\bm{Z}_t$, depending on the initial state.
	
	Now fix a vertex $x \in V^{(k)}$. Since $\bm{S}_t \geq \bm{A}_t$,
	\begin{equation}
	\label{ineq: extinct prob 1}
	\prob\left[|\mathcal{C}_x| \leq r_n\right] \geq \prob_x \left[T_0 ^{(n)} \leq r_n\right] \to q_k
	\end{equation}
	by (\ref{ineq: progeny 1}) and (\ref{ineq: progeny 2}). We proceed by
	\begin{eqnarray}
	\prob\left[|\mathcal{C}_x| \leq r_n\right]&\leq& \prob\left[T_0 ^{(n)} \leq r_n\right]+ \prob\left[T_0 ^{(n)} > r_n, \  |\mathcal{C}_x| \leq r_n\right] \nonumber \\
	&\leq& \prob\left[T_0 ^{(n)} \leq r_n\right] + \prob\left[T_0 ^{(n)} > r_n,\  \bm{S}_{r_n} \neq \bm{A}_{r_n}\right] \nonumber \\
	&\leq& \prob\left[T_0 ^{(n)} \leq r_n\right] + \prob\left[S_{r_n} -S_0 + I_{r_n} < y \mu \lambda U r_n, \bm{S}_{r_n} \neq \bm{A}_{r_n}\right] \nonumber \\
	&& \mbox{ } + \prob\left[S_{r_n} -S_0 + I_{r_n} \geq y \mu \lambda U r_n\right] \nonumber \\
	&=& \prob\left[T_0 ^{(n)} \leq r_n\right] + o(1),
	\label{ineq: extinct prob 2}
	\end{eqnarray}	
	where the last equality owes to (\ref{ineq: 4th dev}) and (\ref{ineq: 5th dev}). Finally (\ref{ineq: extinct prob 1}) and (\ref{ineq: extinct prob 2}) give \[\prob[|\mathcal{C}_x| \leq r_n] \to q_k.\]
	
	\end{proof}
	
	\begin{step}
	The giant component has size $\sim n\sum_{k=1}^{m}a_k (1-q_k)$.
	\end{step}
	
	\begin{proof}
	Define random variables $Y_x := 1_{|\mathcal{C}_x | \leq \beta \log n}$ for each $x \in V$.
	%Let $F_x := \{ | \mathcal{C}_x | \leq \beta \log n\}$, a subset of the whole probability space. Then $Y_x (\omega ) = 1_{(\omega \in F_x)}$. 
	We shall find an upper bound for the covariance of two random variables $Y_x$ and $Y_y\ (x \neq y)$ and use this upper bound to deduce that the giant component size converges in probability. 
	
	In order to estimate the covariance of $Y_x$ and $Y_y$, we consider two exploration processes starting from $x$ and $y$ simultaneously. We first run the upper exploration process with $A_0 = \{ x \}$ until time $t = \beta \log n$. Introduce another upper exploration process with $A' _0 = \{y \}$ that continues according to (\ref{eq: exploration}) until $v' _{t+1} \in R_{\beta \log n}$ for some $t$. If $v' _{t+1} \in R_{\beta \log n}$, from time $t+1$ we follow exactly the exploration process starting from vertex $x$. With this coupled exploration process we deduce that
	\begin{eqnarray}
	\prob[Y_x=1, Y_y=1 ] &=& \prob[\mathcal{C}_x \cap \mathcal{C}_y = \emptyset, Y_x =1, Y_y=1]+ \prob[\mathcal{C}_x \cap \mathcal{C}_y \neq \emptyset, Y_x=1, Y_y=1] \nonumber \\
	&\leq& \prob[Y_x=1]\prob[Y_y=1] + \prob[R_{\beta \log n} \cap R' _{\beta \log n} \neq \emptyset]\nonumber \\
	&\leq& \prob[Y_x=1]\prob[Y_y=1] + \lambda \max_{1 \leq k, l \leq m} c_{kl} \frac{(\beta \log n)^2}{n}. \label{ineq: collision prob}
	\end{eqnarray}
	Now we have
	\[
	\textrm{var} \left( \sum_{x \in V} Y_x \right) \leq N +\lambda C {N \choose 2}\frac{(\beta \log n)^2}{n} = O(n(\log n)^2)
	\]
	where $N = |V| = \sum_{k=1}^m a_k n$. Chebyshev's inequality gives
	\begin{equation}
	\prob\left[\left| \sum_{x \in V} (Y_x - \E Y_x ) \right| \geq n^{2/3} \right] = \frac{O(n(\log n)^2)}{n^{4/3}} \to 0. \label{ineq: Z}
	\end{equation}
	Put $Z := \sum_{x \in V} Y_x$. Then $N-Z$ is the size of the giant component w.h.p. by \textbf{Step 3}. (\ref{ineq: Z}) implies
	\[
	\prob\left[\left| \frac{1}{n} Z - \frac{1}{n}\sum_{x \in V} \E Y_x \right| \geq n^{-1/3}\right] \to 0,
	\]
	and we have \[ \frac{1}{n}\sum_{x \in V} \E Y_x  \to \sum_{k=1}^{m} a_k q_k \] by \textbf{Step 4}. Thus the giant component, which exists w.h.p., has size $\sim n \sum_{k=1}^{m}a_k (1-q_k)$, and the second largest component has size $\leq \beta \log n$ w.h.p.
	
	Recall that we only had $q_k<1 \ \forall k$ for positively regular $\Lambda$ (Corollary~\ref{cor: mbp}). But we immediately have $q_k<1 \ \forall k$ for $\Lambda$ only satisfying our base assumption Remark~\ref{main thm: assumption} by Lemma~\ref{lemma: changing vertex type 2}. 
	\end{proof}
	
We would like to emphasize that in the supercritical regime, every vertex type has a meaningful contribution to the giant component, i.e. there are $\sim a_k(1-q_k)n$ vertices of type $k$ in the giant component for all $k= 1, \dots, m$.

% --------------------------------------------------------------------------------------------------------------------------------------------------------------------------

\section{Connectivity}

The homogeneous case (Theorem~\ref{thm: homo connected}) and the general case (Theorem~\ref{alon thm}) hints that we would  expect the threshold for connectivity to be $p(n) = \Theta(\frac{\log n}{n})$. Before we prove our main theorem on connectivity, Theorem~\ref{main thm: connectivity 1}, we introduce new notations for conciseness. We additionally define $\lambda = \lambda(n) = np(n)$ and use the matrix
\[
\Lambda '= (a_l n p_{kl} )_{1 \leq k, l \leq m} = \lambda \Lambda.
\]
Then $\Lambda$ is a matrix of constant entries, and $\Lambda '$ is a matrix representing the expected number of offsprings of a vertex of a particular type. Let $\mu, \mu '$ be the Perron roots of $\Lambda, \Lambda'$ respectively. Then we have $\mu'= \mu \lambda$, and there exists a vector $\bm{u}$ that is an row eigenvector for $(\Lambda, \mu)$ and $(\Lambda', \mu')$.

We shall assume Remark~\ref{main thm: assumption} as in the previous section. Then we have Proposition~\ref{prop: u>0}, hence \[U = \max \{ u_1 , \dots , u_m \} >0 \textrm{ and }u = \min \{ u_1 , \dots , u_m \} >0.\] We also assume $U \leq 1$. 

We give a brief motivation on why the critical value is $p = \log n / bn$ by noticing the similarities between a homogeneous graph model and a inhomogeneous model. We first turn to the homogeneous model and observe the behavior of $\G_{n,p}$ for $p = \log n /n$. Theorem~7.3 of \cite{Bollobas2001} calculates for \[p = \frac{\log n + c + o(1)}{n} \ (c \in \R)\] that $\G_{n,p}$ consists of a giant component and isolated vertices. This is done by calculating the expected number of components of size $2 \leq k \leq n/2$. The same calculations are applicable in our inhomogeneous situation with \[p = \frac{\log n  + c + o(1)}{bn} \ ( c \in \R )\] and $G$ consists of a giant component and isolated vertices. This implies that the existence of isolated vertices is the essential cause for not being connected. On the other hand, it is natural to associate the expected value of the degree of a fixed vertex with the probability that it is isolated. This accounts for the appearance of $b$ in the critical value.

% ----------------------------------------------------------------------------------------------------------------------------------------------------------------------------

\subsection{The subcritical case}
We first give a proof of (i) in Theorem~\ref{main thm: connectivity 1}. We deal with this subcritical case by counting the number of isolated vertices as in \cite{Durrett2006}. Note that every vertex is equivalent in the homogeneous case, but for the inhomogeneous case, it is not guaranteed that every $V^{(k)}$ contains isolated vertices. 

\begin{proof}
Assume that $\alpha b < 1$. Then there exists $l \in \{1, \dots, m\}$ such that $\alpha b_l <1$. Fix a vertex $x \in V^{(l)}$ and let $d_x$ be the degree of $x$. We have %Using \( \log(1-x) = -x -x^2 /2  - x^3 /3 - \cdots \), we have
\begin{equation}
\prob[d_x = 0] = \prod_{k=1}^{m} \left(1-\frac{\alpha c_{lk} \log n}{n}\right)^{a_k n}
\sim \exp \left(-\alpha \log n \sum_{k=1}^{m} a_k c_{kl} \right) 
= n^{-\alpha b_l}.	\label{eq: degree 0}
\end{equation}

Now let $I_n$ denote the number of isolated vertices of type $l$. By (\ref{eq: degree 0}),
\begin{equation}
\label{eq: EI_n}
\E I_n \sim a_l n^{1-\alpha b_l} \to \infty.
\end{equation}
We calculate the variance of $I_n$ by first observing that for $x, y \in V^{(l)}, x \neq y$,
\[
\prob[d_x = 0, d_y = 0]= \left(1- \frac{\alpha c_{ll}\log n}{n}\right)^{-1} \prob[d_x = 0]\prob[d_y = 0],
\]
then
\begin{eqnarray}
\textrm{var}(I_n) &=& a_l n \prob[d_x = 0 ]\left(1-\prob[d_x= 0]\right) 	\nonumber \\
	&& +  a_l n (a_l n -1)\left(\left(1-\frac{\alpha c_{ll}\log n}{n}\right)^{-1}-1\right) \prob[d_x = 0]\prob[d_x = 0] \nonumber \\
&\sim& a_l n n^{-\alpha b_l} + (a_l n )^2 \frac{\alpha c_{ll}\log n}{n} n^{-2\alpha b_l}	\nonumber \\
&\sim& a_l n^{1-\alpha b_l} + C n^{1 - 2\alpha b_l}\log n	\nonumber \\
&\sim& a_l n^{1-\alpha b_l}.	\label{eq: var(I_n)}
\end{eqnarray}
By (\ref{eq: EI_n}) and (\ref{eq: var(I_n)}), % we have $\E I_n \sim \textrm{var}(I_n)$, hence
\begin{equation}
\prob \left[ | I_n - \E I_n | > (\log n) (\E I_n)^{1/2} \right] \leq \frac{\textrm{var}(I_n)}{(\log n)^2 (\E I_n)} \sim 1/(\log n)^2 =o(1).
\end{equation}
This implies that $G$ has $\sim a_l n^{1-\alpha b_l} \to \infty$ isolated vertices of type $l$, and in particular $G$ cannot be connected w.h.p.
\end{proof}

% ----------------------------------------------------------------------------------------------------------------------------------------------------------------------------

\subsection{The supercritical case}
For the supercritical case, we use similar ideas used in the emergence of a giant component. We shall again execute exploration processes and use a large deviations bound.

We had \(n p_{kl} = \lambda c_{kl}\) constant in our previous section and this allowed the function $G(x)$ in Lemma~\ref{lemma: large deviation} to be independent of $n$, i.e. for any $n$, $\prob [Z \leq x \E Z ] \leq G(x)^t$ held for a fixed function $G(x)$. But in our situation, $n p_{kl} = c_{kl} \lambda (n)$ leads to an upper bound $\prob[Z \leq x \E Z] \leq G(x, n)^t$ where $G(x,n)$ is a function that depends on $n$ and does not work very well. Hence we need another lemma that will play a similar role with Lemma~\ref{lemma: large deviation}. The proof is inspired from the one in \cite[Lemma 2.3.3]{Durrett2006}.

\begin{lemma}
\label{lemma: 2nd easy dev}
Under the same settings of Lemma~\ref{lemma: large deviation}, we have for $x<1<y$,
\[
\prob[Z \leq x \mu \lambda (1-\delta) u t] \leq \exp\left(-\gamma(x) \mu \lambda(1-\delta)ut\right)
\]
and
\[
\prob[Z \geq y \mu\lambda (1-\delta)Ut] \leq \exp\left(-\gamma(y)\mu \lambda(1-\delta)Ut\right).
\]
\end{lemma}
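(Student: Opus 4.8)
## Proof proposal

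The plan is to mimic the exponential–Markov argument from the proof of Lemma~\ref{lemma: large deviation}, but to replace the use of Lemma~\ref{lemma: ratio of exponential} — which was the source of the $n$-dependent factor $F$ — with a cruder bound that simply discards the ``bad'' second product. Concretely, I would start from the moment bound \eqref{ineq: moment 2}, namely
\[
\E \exp(\theta X_i) \leq \sum_{k=1}^m \prob(B_{ik}) \exp\bigl(U_k(e^\theta-1)\bigr),
\]
with $U_k = \mu\lambda(1-\delta)u_k$, and apply Markov's inequality together with the independence of $X_1,\dots,X_t$ exactly as in \eqref{lemma: 1st markov}. Then, rather than splitting off a ratio-of-exponentials term, I would bound the remaining sum directly: for the lower tail take $\theta = \log x < 0$ and use $U_k \geq \mu\lambda(1-\delta)u = B$ together with the monotonicity of the exponent in $U_k$; for the upper tail take $\theta = \log y > 0$ and use $U_k \leq \mu\lambda(1-\delta)U = A$.

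The key computation is the following. After putting $\theta = \log x$ (lower tail), each factor becomes
\[
\frac{\sum_{k=1}^m \prob(B_{ik})\exp\bigl(U_k(x-1)\bigr)}{\exp\bigl(\theta x \sum_{k=1}^m \prob(B_{ik}) U_k\bigr)}
= \prod\text{-factor}.
\]
Since $x<1$ so that $x-1<0$, and $U_k \geq B$, we have $\exp(U_k(x-1)) \leq \exp(B(x-1))$, whence the numerator is at most $e^{B(x-1)}$; and since $\theta x = x\log x < 0$ while $U_k \geq B$, the denominator is at least $\exp(x\log x \cdot B)$. Multiplying the $t$ factors and rewriting $-\log x + x\log x - x + 1 = \gamma(x)$ (using $\gamma(x) = x\log x - x + 1$ from Lemma~\ref{lemma: large deviation}) — wait, more carefully: the exponent per factor is $B(x-1) - x\log x \cdot B = B(x - 1 - x\log x) = -B\gamma(x)$, so the product is at most $\exp(-t B\gamma(x)) = \exp(-\gamma(x)\mu\lambda(1-\delta)u\,t)$, which is the first claimed inequality. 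The upper-tail bound is symmetric: with $\theta = \log y > 0$, $y-1>0$, and $U_k \leq A$, the numerator is at most $e^{A(y-1)}$ and the denominator at least $\exp(y\log y\cdot A)$, giving $\exp(-A\gamma(y)t) = \exp(-\gamma(y)\mu\lambda(1-\delta)U\,t)$.

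One subtlety to check carefully is that the two monotonicity steps genuinely point the right way for both tails — for the lower tail we need a lower bound on $U_k$ in the numerator (because $x-1<0$) and also a lower bound on $U_k$ in the (negative) denominator exponent, and both hold because $U_k \geq B$; for the upper tail both steps use $U_k \leq A$. Since $u = \min_k u_k$ and $U = \max_k u_k$ are both strictly positive by Proposition~\ref{prop: u>0}, the quantities $A = \mu\lambda(1-\delta)U$ and $B = \mu\lambda(1-\delta)u$ are positive constants (for fixed $n$), so the bounds are nontrivial. I do not expect a serious obstacle here; the only mild care needed is the bookkeeping of signs in the exponents, which is exactly where the cleaner bound (versus Lemma~\ref{lemma: large deviation}) buys us independence of the subtle ratio term at the cost of a slightly weaker — but for the connectivity application entirely sufficient — exponential rate $\gamma(x)B$ rather than $\gamma(x)B + \log F(x)$.
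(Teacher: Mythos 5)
Your overall plan — push the monotonicity in $U_k$ through the moment bound and apply Markov — is exactly the paper's strategy, and the final bound is right, but the way you set it up has a genuine sign error in the denominator estimate. You import the factorization from \eqref{lemma: 1st markov}, whose denominator is $\exp\bigl(\theta x \sum_k \prob(B_{ik})U_k\bigr)$ per factor. This denominator comes from Markov applied to the event $\{Z \leq x\,\E Z\}$; but the lemma you are proving concerns a \emph{different} event, $\{Z \leq x\mu\lambda(1-\delta)u\,t\}$. You then claim that, since $\theta x = x\log x<0$ and $U_k\geq B$, the denominator is \emph{at least} $\exp(x\log x\cdot B)$. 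That inequality runs the wrong way: $\sum_k \prob(B_{ik})U_k \geq B$ combined with $x\log x<0$ gives $x\log x\sum_k\prob(B_{ik})U_k \leq x\log x\cdot B$, i.e.\ the denominator is \emph{at most} $\exp(x\log x\cdot B)$, which is a useless bound for your purposes (it makes the fraction larger, not smaller). The same flaw appears in your upper-tail step, where $S_i\leq A$ and $y\log y>0$ give the denominator $\leq\exp(y\log y\cdot A)$, again the wrong direction.

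The fix is small and brings you in line with what the paper actually does: do not route through \eqref{lemma: 1st markov} at all. Apply Markov directly to the event in the lemma statement, so that the denominator is the \emph{deterministic} quantity $\exp\bigl(\theta x\,\mu\lambda(1-\delta)u\,t\bigr)$ (lower tail) or $\exp\bigl(\theta y\,\mu\lambda(1-\delta)U\,t\bigr)$ (upper tail), and no monotonicity argument is needed there. The only place monotonicity in $U_k$ enters is the numerator: from \eqref{ineq: moment 2}, $\E\exp(\theta X_i)\leq \sum_k\prob(B_{ik})\exp(U_k(e^\theta-1))\leq\exp\bigl(\mu\lambda(1-\delta)u(e^\theta-1)\bigr)$ for $\theta<0$ (replace $u$ by $U$ for $\theta>0$), since $e^\theta-1$ has a fixed sign. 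Multiplying over $i$, applying Markov with the deterministic threshold, and setting $\theta=\log x$ (resp.\ $\log y$) then yields the claimed $\exp(-\gamma(x)\mu\lambda(1-\delta)ut)$ (resp.\ $\exp(-\gamma(y)\mu\lambda(1-\delta)Ut)$). This is precisely the paper's argument; your write-up would be correct once the denominator step is replaced as described.
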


\begin{proof}
We first prove the lemma for $x<1$. Let $\theta <0$. Then (\ref{ineq: moment 2}) gives \[ \E \exp \theta X_i \leq \exp \left(\mu \lambda (1-\delta) u (e^\theta -1)\right), \] hence \[ \E \exp \theta Z \leq \exp\left(\mu \lambda (1-\delta)u(e^\theta -1)t\right).\] Applying Markov's inequality with $\theta = \log x$ gives
\[
\prob\left[Z \leq x \mu \lambda (1-\delta)ut\right] \leq \exp \left(-\gamma(x) \mu \lambda (1-\delta)ut\right).
\]

Similarly for $y>1$, letting $\theta >0$ we have \[ \E \exp \theta X_t \leq \exp\left(\mu \lambda (1-\delta) U (e^\theta -1)\right), \] and applying Markov with $\theta = \log y$ gives
\[
\prob[Z \geq y \mu \lambda (1-\delta)Ut] \leq \exp \left(\gamma(y) \mu \lambda (1-\delta)Ut\right).
\]
\end{proof}

% BIG COMMENT -------------------------------------------------------------------------------------------------------------------------------------------------------
\begin{comment}
\begin{lemma}
\label{lemma: fixed type deviation bound}
Let $Z = X_1 + \dots X_K$, where $X_i$'s are independent random variables with
\[
X_i = \bm{u} \longbm{\Bin{a_1 n (1-\delta)}{p_{k(i)1}}}{\vdots}{\Bin{a_m n (1-\delta )}{p_{k(i)m}}}
\]
where all the binomial random variables appearing above are independent, and $k(i) \in \{ 1, \dots , m\}$ is a fixed number for each $i = 1, \dots , K$. Define \( \gamma (x) = x \log x - x +1 \). We have for $0<x<1$, 
\[
\prob(Z \leq x \E Z) \leq \exp(-\gamma(x) (1-\delta) \mu u K).
\]
In particular,
\[
\prob(Z \leq x(1-\delta)\mu u K) \leq \exp(-\gamma(x) (1-\delta) \mu u K).
\]
\end{lemma}

\begin{proof}
Following (\ref{ineq: moment}), we have for $\theta \in \R$
\[
\E \exp(\theta Z) \leq \exp\Bigl(\mu (1-\delta) (e^\theta -1)\sum_{i=1}^{K}u_{k(i)} \Bigr) = \exp(\mu (1-\delta)(e^\theta -1)\E Z)
\]
Applying Markov's inequality with $\theta = \log x <0$ gives
\[
\prob(Z \leq x \E Z) \leq \exp (-\gamma(x) \E Z) \leq \exp(-\gamma(x) (1-\delta)\mu u K)
\]
since \( \E Z = (1-\delta)\mu \sum_{i=1}^{K}u_{k(i)} \geq (1-\delta) \mu u K \).
\end{proof}

This lemma is a simplified version of Lemma~\ref{lemma: large deviation} since there is no randomness in $k(i)$.
\end{comment}
% BIG COMMENT -------------------------------------------------------------------------------------------------------------------------------------------------------

For the homogeneous case, \cite{Durrett2006} observes that a vertex $x \in V$ must have at least 14 neighbors w.h.p. From these neighbors it is guaranteed that at least $7 \log n$ vertices are within distance 2 from $x$ w.h.p., and by using \textbf{Step 1} in the proof of Theorem~\ref{main thm: component} the lower exploration process is valid for a long time and $\C_x$ has order $\geq 0.1 n^{1/2}\log n$ w.h.p. We follow similar steps, and subtle differences arise as in the proof of the giant component. 

We begin the proof of Theorem~\ref{main thm: connectivity 2} (ii). Let $\alpha b >1$. 

\begin{proof}
Let $x \in V^{(k)}$. For a fixed integer $K$, we want to find an upper bound for $\prob[d_x \leq K]$, and thus calculate
\begin{eqnarray}
\prob[d_x \leq K] &=& \sum_{0 \leq i_1 + \cdots + i_m \leq K} \left[ \ \prod_{l=1}^{m} {a_l n \choose i_l} \left(\frac{\alpha c_{kl}\log n}{n}\right)^{i_l} \left(1-\frac{\alpha c_{kl}\log n}{n}\right)^{a_l n - i_l}	\right] \nonumber \\
&\leq& \sum_{0 \leq i_1 + \cdots + i_m \leq K} \left[\ \prod_{l=1}^{m} (a_l \alpha c_{kl})^K (\log n)^{i_l} \exp \left( -\alpha a_l c_{kl} \log n + \frac{\alpha c_{kl} K \log n}{n}\right) \right] \nonumber \\
&= & O\left((\log n)^K n^{-\alpha b_k}\right) = o(n^{-1}).	\label{ineq: degree>K}
\end{eqnarray}

Now take an integer $K$ such that $\mu \alpha u K > 2$ and $\gamma(1/2)\mu \alpha u K >2$. Let $Y_x$ be the number of vertices at distance $\leq 2$ from $x$. If $d_x >K$, let $v_1, \dots , v_K$ denote different vertices that are adjacent to $x$, and define $Z = X_1 + \cdots + X_K$ where $X_i$ is the number of vertices adjacent to $v_i$. We say that there is a `collision' in $\{y: \textrm{dist}(y,x) \leq 2\}$ if there exists a vertex $\neq x$ that is adjacent to both $v_i, v_j$ for some $1 \leq i < j \leq K$. Then applying Lemma~\ref{lemma: 2nd easy dev} with $\delta = 0, x = 1/2$ gives
\begin{eqnarray}
\prob[Y_x \leq \log n]&\leq& \prob[d_x \leq K] + \prob[d_x > K, Y_x \leq \log n] \nonumber \\
&\leq& o(n^{-1}) + \prob[Z \leq \log n + K^2 ] \nonumber \\
	&&+ \prob\left[\textrm{exists a collision in }\{y: \textrm{dist}(y,x) \leq 2\}, Y_x \leq \log n \right]\nonumber \\
&\leq& o(n^{-1}) + n^{-\gamma(1/2)\mu \alpha uK} + o(n^{-1}) 
= o(n^{-1})	.	\label{ineq: Y_x}
\end{eqnarray}

Fix $\delta \in (0,1)$, e.g. $\delta = 1/2$. Consider the random walk $\bm{W}_t$ from the lower exploration process and let $T_0 = \inf \{t:  \bm{W}_t = \bm{0} \}$. Following \textbf{Step 1} in the proof of Theorem~\ref{main thm: component}, we see that for any $\theta >0$, 
\[
 \varphi_k (-\theta) := \exp \Bigl(u_k \bigl(\theta + \mu' (1-\delta)(e^{-\theta} -1)\bigr) \Bigr) <1
\] 
for sufficiently large $n$ since $\mu' = \mu \lambda = \mu \alpha \log n$. In particular, letting $\theta = 2/u$ gives
\begin{equation}
\label{ineq: T_0}
\prob_{\log n}[T_0 < \infty] \leq n^{-2} = o(n^{-1}).
\end{equation}
Now run the lower exploration process starting from \(\{y \in V : \textrm{dist}(y,x) \leq 2\}\). Combining (\ref{ineq: Y_x}) and (\ref{ineq: T_0}) gives $T_0 = \infty$ with probability $1 - o(n^{-1})$. Take a constant $\beta < \mu \alpha u /5$. Following (\ref{ineq: 1st dev}) from \textbf{Step 2} of Theorem~\ref{main thm: component} for $t = n^{1/2}$ and applying Lemma~\ref{lemma: 2nd easy dev} with $\delta= x = 1/2$ gives
\begin{eqnarray}
\prob[W_t - W_0 \leq 2\beta t \log t ] &\leq& \exp(-2n^{\delta '} / U^2) + \prob\left[Z \leq 2\beta t \log t + t^{1/2 + \delta '} +\E Z / \mu' \right] \nonumber \\
&\leq& o(n^{-1}) + \prob\left[Z \leq 2\beta t\log t + t^{1/2 + \delta '} + Ut \right] \nonumber \\
&\leq& o(n^{-1}) + \prob\left[Z \leq \frac{\mu' u t}{2} \left(\frac{4 \beta \log t}{\mu' u}+\frac{2 t^{1/2 + \delta'}}{\mu' ut} + \frac{2U}{\mu' u}\right) \right] \nonumber \\
&\leq& o(n^{-1}) + \prob \left[Z \leq \frac{\mu' ut}{2}\left(\frac{2\beta}{\alpha \mu u}+o(1)\right)\right] \nonumber \\
&\leq& o(n^{-1}) + \prob \left[Z \leq \frac{\mu' ut}{4}\right] \nonumber \\
&\leq& o(n^{-1}) + \exp \left( -\gamma(1/2)\mu ut/2 \right)  = o(n^{-1}). 	\label{ineq: many active}
\end{eqnarray}
By putting $\delta = 0,\ y=2$ in Lemma~\ref{lemma: 2nd easy dev},
\begin{equation}
\prob[W_t - W_0 + I_t  \geq 2\alpha \mu U t \log t] \leq \exp \left(-\gamma(2) \mu U t/2\right) = o(n^{-1}),
\label{ineq: valid coupling}
\end{equation}
and along with the fact that $T_0 = \infty$ with probability $1-o(n^{-1})$, the coupling between $\bm{W}_s$ and $\hat{\bm{A}}_s$ remains valid for $s \leq n^{1/2}$ with probability $1-o(n^{-1})$. By (\ref{ineq: many active}), at least $\beta n^{1/2}\log n$ vertices are active at time $n^{1/2}$ with probability $1-o(n^{-1})$. 

Denote the largest component by $\C_1$. Suppose that at time $n^{1/2}$ there are at least $\beta n ^{1/2}\log n$ active vertices in our lower exploration. Then there exists $l$ such that there are at least $\frac{\beta}{m}n^{1/2}\log n$ active vertices of type $l$. By our assumption in Remark~\ref{main thm: assumption} there exists $l'$ such that $c_{ll'}>0$, and Theorem~\ref{main thm: component} gives that for some $\beta '>0$, the largest component has at least $\beta' n^{1/2}\log n $ vertices of type $l'$. Then the probability that $\C_1$ and $\C_x$ are disjoint is
\[
\leq \left(1-\frac{\alpha c_{ll'}\log n}{n}\right)^{d n(\log n)^2} \sim \exp\left(-d c_{ll'}(\log n)^3\right) = o(n^{-1}).
\]
Hence
\[
\prob[G \textrm{ is not connected}]=\prob[ \exists x \in V \textrm{ s.t. } \C_1\cap \C_x = \emptyset] = N o(n^{-1}) = o(1)
\]
where $N = |V|$.
\end{proof}

%------------------------------------------------------------------------------------------------------------------------------------------------------------------------------

\subsection{The critical case}
Let $\lambda = (\log n + c + o(1))/b$ throughout this subsection. We prove the first part of Theorem~\ref{main thm: critical connectivity} by observing that there are no components of size between 2 and $\beta n$ for some $\beta >0$. Let $C = \max_{1 \leq k, l \leq m }c_{kl}$.
\begin{lemma}
\label{lemma: connectivity critical}
Let $q$ be the probability that for some $2 \leq r \leq \frac{b}{3C}n$, there exists a component of size $r$. Then $q = o(1)$.
\end{lemma}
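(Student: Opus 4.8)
The plan is to bound $q$ by a union bound over the expected number of components of each size $r$ in the range $2 \le r \le \frac{b}{3C}n$, and to show that this sum tends to $0$. Fix such an $r$. A set $S$ of $r$ vertices spans a component only if $S$ is internally connected \emph{and} there are no edges between $S$ and $V\setminus S$. The probability of the second event, for a fixed $S$ with $r_k := |S\cap V^{(k)}|$, is $\prod_{k,l}(1-p_{kl})^{r_k(a_l n - r_l)}$, which I would bound above by $\exp\!\big(-\sum_{k,l} p_{kl} r_k (a_l n - r_l)\big)$. Since $r \le \frac{b}{3C}n$ we have $a_l n - r_l \ge a_l n - r \ge (1 - \tfrac{b}{3C a_l})\,a_l n$, but more simply $\sum_l a_l n - r_l \ge N - r \ge N(1 - o(1))$ with $N = |V|$; the key point is that for \emph{each} vertex $x\in S$ of type $k$, the expected number of its neighbours outside $S$ is at least $(1-o(1))\sum_l a_l c_{kl}\lambda \ge (1-o(1)) b\lambda = (1-o(1))(\log n + c)$. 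Hence the ``no outgoing edge'' probability is at most $\exp\!\big(-(1-o(1)) r (\log n + c)\big) = n^{-r(1-o(1))}$ up to constants.

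Next I would count the internally-connected configurations. The number of ways to choose $S$ with a prescribed type-vector $(r_1,\dots,r_m)$ is at most $\prod_k \binom{a_k n}{r_k} \le \prod_k \frac{(a_k n)^{r_k}}{r_k!} \le \frac{(Cn)^r}{\prod_k r_k!}$ for a suitable constant, and summing over type-vectors with $\sum_k r_k = r$ costs only a factor $\binom{r+m-1}{m-1} = O(r^{m-1})$, which is harmless. Given $S$, the probability that $G[S]$ is connected is at most the expected number of spanning trees on $S$ times $p_{\max}^{r-1}$ where $p_{\max} = C\lambda/n$; by Cayley-type counting there are at most $r^{r-2}$ labelled trees on $r$ vertices, so this probability is at most $r^{r-2} (C\lambda/n)^{r-1}$. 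Combining, the expected number of components of size $r$ is at most (absorbing constants)
\[
  \E[\#\{\text{components of size } r\}] \le O(r^{m-1})\cdot \frac{(Cn)^r}{r!}\cdot r^{r-2}\Big(\frac{C\lambda}{n}\Big)^{r-1}\cdot n^{-r(1-o(1))}.
\]
Using $r! \ge (r/e)^r$ and $\lambda = \Theta(\log n)$, the $n^r$ from the choice of $S$ cancels against $(1/n)^{r-1}$ and the factor $n^{-r(1-o(1))}$, so each term behaves like $n\cdot (\text{const}\cdot\log n)^{r-1}\, n^{-r(1-o(1))}$, which for $r\ge 2$ is at most $n^{-1+o(1)}$, and the bound improves geometrically as $r$ grows (the extra factor roughly $(C e \log n / n)^{r}$ beats the polynomial corrections). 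Summing the geometric-type series over $2 \le r \le \frac{b}{3C}n$ gives $q = o(1)$.

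The main obstacle is handling the $o(1)$ exponents uniformly in $r$: the $(1-o(1))$ in the outgoing-edge exponent comes from both $\lambda = (\log n + c + o(1))/b$ and from the replacement $a_l n - r_l \ge a_l n(1-o(1))$, and the latter is only $o(1)$ when $r = o(n)$ — this is precisely why the range is cut off at $\frac{b}{3C}n$ rather than $n/2$. I would make this rigorous by splitting the range into, say, $2 \le r \le \log^2 n$ (where $a_l n - r_l = a_l n(1 - O(n^{-1}\log^2 n))$ is a genuine $1-o(1)$ and the dominant contribution $\sim n^{-1+o(1)}$ comes from $r=2$) and $\log^2 n \le r \le \frac{b}{3C}n$ (where even the crude bound $a_l n - r_l \ge \frac{2}{3}a_l n$ from $r \le \frac{b}{3C}n \le \frac13 a_{\min} n$ — adjusting constants — gives an outgoing-edge probability at most $\exp(-\frac{2}{3} r b \lambda) \le n^{-\frac{2}{3}r}$, and the factor $r^{r-2}(C\lambda/n)^{r-1} n^r / r!$ is easily dominated, making each term super-exponentially small in $r$). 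In both regimes the sum is $o(1)$, which proves the lemma.
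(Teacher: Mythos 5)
Your approach mirrors the paper's: both are first-moment arguments that bound the expected number of components of size $r$ by summing over spanning trees ($r^{r-2}$ from Cayley's formula), using $\binom{N}{r}\le(Ne/r)^r$, the tree-edge probability $(C\lambda/n)^{r-1}$, and the ``no outgoing edge'' factor $\prod_{k,l}(1-p_{kl})^{r_k(a_l n - r_l)}\le\exp\bigl(-\sum_{k,l}p_{kl}r_k(a_l n-r_l)\bigr)$; the cut-off at $\frac{b}{3C}n$ plays the same role in both. The paper treats the whole range in one computation by explicitly tracking the quadratic correction $\frac{C\lambda}{n}r^2$ in the exponent, whereas you split into $r\le\log^2 n$ and $\log^2 n\le r\le\frac{b}{3C}n$; that split is cosmetic and could work.

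There is, however, a concrete error in your second regime. You infer $a_l n - r_l \ge \tfrac{2}{3}a_l n$ for every $l$ from $r\le\frac{b}{3C}n\le\tfrac13 a_{\min}n$, but the inequality $\frac{b}{3C}\le\tfrac13 a_{\min}$ is equivalent to $b\le C a_{\min}$, which is false in general (e.g.\ $m=2$, $a_1=0.9$, $a_2=0.1$, all $c_{kl}=1$ gives $b=1$ and $Ca_{\min}=0.1$). No adjustment of constants rescues the per-type bound: if $a_l$ is small, $r_l$ can equal $a_l n$ even with $r\le\frac{b}{3C}n$, making $a_l n-r_l=0$. The correct move, implicit in the paper's computation, is to bound the \emph{aggregate} exponent directly: $\sum_{k,l}p_{kl}r_k(a_l n - r_l) = \lambda\sum_k r_k b_k - \tfrac{\lambda}{n}\sum_{k,l}c_{kl}r_k r_l \ge \lambda b r - \tfrac{\lambda C}{n}r^2 \ge \tfrac{2}{3}\lambda b r$ whenever $r\le\frac{b}{3C}n$, without any per-type lower bound on $a_l n-r_l$. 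With this substitution your two-regime argument closes the gap and proves the lemma.
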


\begin{proof}
We may assume $\lambda = (\log n + c)/b$.  Let $q_r$ be the probability that there exists a component of size $r$. Consider the complete graph $K$ formed from the vertex set $V$. We write $T \in K$ to indicate that $T$ is a tree of order $r$ contained in $K$ and $E_T$ shall denote the edges of $K$. We also put $\bm{i}^T = (i_k^T)_{1\leq k \leq m}$ where $i_k^T$ is the number of type $k$ vertices in $T$. We have 
\[ q_r = \sum_{T \in K} \left[ \prod_{e \in E_T} p_e  \prod_{1 \leq k, l \leq m} (1-p_{kl})^{(a_l n - i_l^T)i_k^T}\right], \]
where $p_e$ denotes the probability of the edge $e$ in our model $\G$. Now fix $\bm{i} = (i_k)_{1\leq k \leq m}$ such that 
\[  \prod_{1 \leq k, l \leq m} (1-p_{kl})^{(a_l n - i_l)i_k} = \max_{T \in K} \left[ \prod_{1 \leq k, l \leq m} (1-p_{kl})^{(a_l n - i_l^T)i_k^T} \right]. \]
Note that $|E_T| = r-1$, $|\{ T \in K \}| = r^{r-2}$ and ${n \choose k} \leq \left(\frac{ne}{k}\right)^k$ for every $n, k$. Let $A = \sum_{k=1}^m a_k$ and $N' = \frac{b}{3C}n$. We proceed by
\begin{eqnarray*}
\sum_{r=2}^{N'} q_r &\leq& \sum_{r=3}^{N'} \left[ {N \choose r} r^{r-2} \left( \frac{C \lambda}{n} \right)^{r-1} \prod_{1 \leq k, l \leq m} (1-p_{kl})^{(a_l n - i_l)i_k}\right] \\
&\leq& \sum_{r=2}^{N'} \left[(Ane)^r r^{-2} \left( C \frac{\log n + c}{bn} \right)^{r-1} \exp\left( -\sum_{k, l} p_{kl}(a_l n - i_l)i_k \right)\right] \\
&\leq& \sum_{r=2}^{N'} \exp \Biggl( r( \log n + C') + (r-1)(-\log n + \log \log n + C') \\
&& \mbox{ \ \ \ \ \ \ \ \ \ \ \ \ \ \ } - \frac{\log n + d}{b} \sum_k b_k i_k +  C \frac{\log n + c}{bn} \sum_{k, l} i_k i_l \Biggr) \\
&\leq& \sum_{r=2}^{N'} \exp \Biggl( r( \log n + C') + (r-1)(-\log n + \log \log n + C') \\
&& \mbox{ \ \ \ \ \ \ \ \ \ \ \ \ \ \ } - r (\log n + d) +  Cr^2 \frac{\log n + c}{bn} \Biggr) \\
&\leq& \sum_{r=2}^{N'} \exp \left( \log n + 2r \log \log n - r \log n + \frac{1}{3}r \log n \right) = o(1).
\end{eqnarray*}
\end{proof}

Suppose $G$ contains two components of size $\geq 2$. From Lemma~\ref{lemma: connectivity critical}, we may assume that both have at least size $\beta n$ for some $\beta>0$. Considering Lemma~\ref{lemma: changing vertex type 2}, the probability that these components are not connected is \( \exp ( -O(n \log n)) = o(1). \)
Hence $G$ consists of a giant component and some isolated vertices w.h.p. To prove the second part of Theorem~\ref{main thm: critical connectivity} we use the ideas in \cite{Durrett2006}.

\begin{theorem}
\label{thm: poisson}
For $\lambda = (\log n + c + o(1))/b$, the number of isolated vertices $I_n$ in $G$ converges to Poi$(Ae^{-c})$ where \( A = \sum_{\{k: b_k = b\}} a_k. \) 
\end{theorem}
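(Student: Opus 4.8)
The plan is to use the method of moments: I will show that for every fixed $j \geq 1$, the $j$-th factorial moment $\mathbb{E}\bigl[(I_n)_j\bigr] := \mathbb{E}\bigl[I_n(I_n-1)\cdots(I_n-j+1)\bigr]$ converges to $(Ae^{-c})^j$, which characterizes the Poisson limit. Write $I_n = \sum_{k=1}^m I_n^{(k)}$ where $I_n^{(k)}$ counts the isolated vertices of type $k$. Exactly as in the computation \eqref{eq: degree 0}, for a vertex $x \in V^{(k)}$ we have
\[
\prob[d_x = 0] = \prod_{l=1}^{m}\left(1 - \lambda c_{kl}/n\right)^{a_l n} \sim \exp\left(-\lambda \sum_{l=1}^{m} a_l c_{kl}\right) = \exp(-\lambda b_k) = n^{-b_k/b}\,e^{-(c+o(1))b_k/b}.
\]
Hence $\mathbb{E} I_n^{(k)} = a_k n \cdot n^{-b_k/b} e^{-(c+o(1))b_k/b}$, which tends to $a_k e^{-c}$ when $b_k = b$ and to $0$ when $b_k > b$ (since then $n^{1 - b_k/b} \to 0$). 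Summing, $\mathbb{E} I_n \to \sum_{\{k: b_k = b\}} a_k e^{-c} = Ae^{-c}$, giving the first moment.

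First I would set up the factorial moment. For distinct vertices $x_1, \dots, x_j$, the event that all of them are isolated means: none of the $\binom{j}{2}$ edges among them is present, and none of the edges from any $x_i$ to $V \setminus \{x_1,\dots,x_j\}$ is present. Thus
\[
\prob[d_{x_1} = \cdots = d_{x_j} = 0] = \prod_{i=1}^{j}\prod, \quad\text{schematically } \approx \Bigl(\prod_{i=1}^j \prob[d_{x_i}=0]\Bigr) \cdot \prod_{1 \le i < i' \le j}\bigl(1 - \lambda c_{k_i k_{i'}}/n\bigr)^{-1},
\]
where $k_i$ is the type of $x_i$; the correction factor $\prod_{i<i'}(1 - \lambda c_{k_i k_{i'}}/n)^{-1}$ accounts for the $\binom{j}{2}$ ``missing'' potential edges among the $x_i$'s, and each such factor is $1 + O(\lambda/n) = 1 + O(\log n / n) \to 1$. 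Then
\[
\mathbb{E}\bigl[(I_n)_j\bigr] = \sum_{\substack{x_1, \dots, x_j \in V \\ \text{distinct}}} \prob[d_{x_1} = \cdots = d_{x_j} = 0] = (1 + o(1)) \sum_{\substack{x_1, \dots, x_j \\ \text{distinct}}} \prod_{i=1}^j \prob[d_{x_i} = 0].
\]
The sum over distinct tuples differs from the sum over all tuples (which factors as $(\mathbb{E} I_n)^j$) by lower-order terms coming from coincidences $x_i = x_{i'}$; since $\mathbb{E} I_n = O(1)$ and each ``diagonal'' contributes a factor $\sum_x \prob[d_x=0]^2 = O(\mathbb{E} I_n \cdot \max_x \prob[d_x=0]) = o(1)$ relative to the rest, we get $\mathbb{E}\bigl[(I_n)_j\bigr] \to (Ae^{-c})^j$.

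The main obstacle is bookkeeping rather than conceptual: one must verify carefully that the per-vertex probability $\prob[d_{x_i} = 0]$ is genuinely $(1+o(1)) n^{-b_{k_i}/b} e^{-c b_{k_i}/b}$ uniformly (the $o(1)$ in the exponent $\lambda = (\log n + c + o(1))/b$ must be controlled so that $n^{-b_k/b}$ survives for $b_k = b$ and is crushed for $b_k > b$), and that the contributions of types with $b_k > b$ to all factorial moments vanish, not just the first. This is automatic: any tuple $(x_1,\dots,x_j)$ containing a vertex of a type $k$ with $b_k > b$ has total probability bounded by $\mathbb{E} I_n^{(k)} \cdot (Ce^{-c})^{j-1} = o(1)$. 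Finally, $\mathbb{E}\bigl[(I_n)_j\bigr] \to \nu^j$ for all $j$ with $\nu = Ae^{-c}$ implies $I_n \xrightarrow{d} \mathrm{Poi}(\nu)$ by the standard factorial-moment criterion (e.g.\ \cite[Theorem 1.23]{Janson2000}), completing the proof.
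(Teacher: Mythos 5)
Your proposal is correct and takes essentially the same route as the paper: both compute the factorial moments of $I_n$ via the per-vertex isolation probability from \eqref{eq: degree 0}, multiply in the correction factors $(1-p_{k_i k_{i'}})^{-1}$ for pairs of candidate isolated vertices, and invoke the method-of-moments criterion for Poisson convergence. The only difference is presentational: the paper computes the cases $j=1,2$ explicitly and appeals to ``analogously'' for general $j$, whereas you spell out the generic $j$-tuple bookkeeping.
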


\begin{proof}
Following (\ref{eq: degree 0}), the probability that a type $k$ vertex is isolated is $q_k \sim e^{-cb_k/b}n^{-b_k / b}$. Reorder the vertex types such that $b_1 \leq b_2 \leq \dots \leq b_m$ and assume $b = b_1 = \dots = b_l \ (1 \leq l \leq m)$. Then
\[
\E I_n = \sum_{k=1}^{m} a_k n q_k \sim \sum_{k=1}^{l}a_k e^{-c} = Ae^{-c}.
\]
We now calculate the expected number of ordered 2-tuples of isolated vertices by
\begin{eqnarray*}
\E I_n (I_n -1) &=& \sum_{k=1}^{m} a_k n (a_k n -1) q_k^2 (1-p_{kk})^{-1}+ \sum_{k_1 \neq k_2} (a_{k_1}n)(a_{k_2}n)q_{k_1}q_{k_2}(1-p_{k_1 k_2})^{-1} \\
&\sim& A^2 e^{-2c}.
\end{eqnarray*}
Analogously, the expected number of ordered $r$-tuples of isolated vertices is $\sim A^re^{-rc}$, which implies that $I_n$ converges to Poi$(Ae^{-c})$.
\end{proof}

\begin{remark}
\cite{Durrett2006} shows that for $p = (\log n + c + o(1))/n$, the number of isolated vertices converge to Poi$(e^{-c})$. Thus we obtained a similar result for inhomogeneous random graphs, but the difference is that there is a $b$ in the denominator of $p$ and that there is a factor $A$ which is the sum of $a_k$ for $k$ satisfying $b_k = b$. The occurrence of $A$ is natural since the more the number of vertex types satisfying $b_k=b$, the more isolated vertices. Theorem~\ref{thm: poisson} also implies that the probability that $G$ is connected converges to $\exp(-Ae^{-c})$.
\end{remark}

% -----------------------------------------------------------------------------------------------------------------------------------------------------------------------------

\subsection{Connectivity for random graphs with multiple blocks}
Recall the second model defined in Definition~\ref{def: graph model 2}. The proof for Theorem~\ref{main thm: connectivity 2} readily follows from Theorem~\ref{main thm: connectivity 1}. If $\alpha b_k ^{(i)} < 1$ for some $i \in \{ 1, \dots , r\}$ and $k \in \{1, \dots , m_i \}$, following the proof of Theorem~\ref{main thm: connectivity 1}(i) gives the existence of sufficiently many isolated vertices of type $k$ in block $i$. Note that for a vertex $x$, $\prob[d_x=0]$ is barely affected by $q$ and $p'(n)$ that occurs from other blocks. If $\alpha b_k ^{(i)} >1$ for all $i$ and $k$, then each block $G_i$ is connected w.h.p.  Using the assumption in Remark~\ref{main thm: assumption 2}, the probability that $G_i$ and $G_j$ are not connected is $\sim \exp(-C_{ij} n^2 p'(n))$ for some $C_{ij}>0$ by Lemma~\ref{lemma: changing vertex type 2}. Hence we additionally need $n^2 p'(n) \to \infty$ for $G$ to be connected.

\paragraph{Acknowledgement}
This article was written as part of the Undergraduate Research Program at Seoul National University. I would like to express my gratitude to Professor Insuk Seo for presenting the research topic and guiding the discussions.

% ------------------------------------------------------------------------------
% Reference and Cited Works
% ------------------------------------------------------------------------------
\printbibliography

\end{document}